\newcommand\norm[2]{\left\lVert#1\right\rVert_{#2}}
\newcommand\N{\mathbb{N}}
\newcommand\R{\mathbb{R}}
\newcommand{\cl}{\operatorname{cl}}
\newcommand{\spa}{\operatorname{span}}
\newcommand{\conv}{\operatorname{conv}}
\newcommand{\cone}{\operatorname{cone}}
\newcommand{\dist}{\operatorname{dist}}
\newcommand{\clconv}{\operatorname{\overline{\conv}}}
\DeclareMathOperator*{\argmin}{\operatorname{argmin}}
\DeclareMathAlphabet{\mathpzc}{OT1}{pzc}{m}{it}
\newcommand\oo{\mathpzc{o}}
\newtheorem{theorem}{Theorem}[section]
\newtheorem{lemma}[theorem]{Lemma}
\newtheorem{proposition}[theorem]{Proposition}
\newtheorem{corollary}[theorem]{Corollary}
\newtheorem{remark}[theorem]{Remark}
\newtheorem{definition}[theorem]{Definition}
\newtheorem{example}[theorem]{Example}
\crefname{figure}{Figure}{Figures}
\begin{document}

\title{On the linear independence constraint qualification in disjunctive programming}
\author{%
	Patrick Mehlitz%
	\footnote{%
		Brandenburgische Technische Universität Cottbus--Senftenberg,
		Institute of Mathematics,
		03046 Cottbus,
		Germany,
		\email{mehlitz@b-tu.de},
		\url{https://www.b-tu.de/fg-optimale-steuerung/team/dr-patrick-mehlitz},
		ORCID: 0000-0002-9355-850X%
		}
	}

\publishers{}
\maketitle

\begin{abstract}
	 Mathematical programs with disjunctive constraints (MPDCs for short) cover several different
	 problem classes from nonlinear optimization including complementarity-, 
	 vanishing-, cardinality-, and switching-constrained optimization problems.
	 In this paper, we introduce an abstract but reasonable version of the prominent linear independence
	 constraint qualification which applies to MPDCs. Afterwards, we derive first- and
	 second-order optimality conditions for MPDCs under validity of this constraint
	 qualification based on so-called strongly stationary points. 
	 Finally, we apply our findings to some popular classes of disjunctive
	 programs and compare the obtained results to those ones available in the literature.
	 Particularly, new second-order optimality conditions for mathematical programs with
	 switching constraints are by-products of our approach.
\end{abstract}

\begin{keywords}	
	Constraint qualifications, Disjunctive programming, 
	Linear independence constraint qualification,
	Strong stationarity, Second-order optimality conditions
\end{keywords}

\begin{msc}	
	90C30, 90C33
\end{msc}

\section{Introduction}\label{sec:introduction}

In this paper, so-called \emph{mathematical programs with disjunctive constraints} (MPDCs) are studied. 
These are optimization problems of the form
\begin{equation}\label{eq:MPDC}\tag{MPDC}
	\begin{aligned}
		f(x)&\,\rightarrow\,\min\\
		F(x)&\,\in\,D
	\end{aligned}
\end{equation}
where $f\colon\R^n\to\R$ as well as $F\colon\R^n\to\R^m$ are twice continuously differentiable and
$D:=\bigcup_{i=1}^rD_i$ is the finite union of given polyhedral sets $D_1,\ldots,D_r\subset\R^m$.
Recall that a set is called polyhedral whenever it can be represented as the intersection of
finitely many half spaces.
We use $X:=\{x\in\R^n\,|\,F(x)\in D\}$ in order to denote the feasible set of \eqref{eq:MPDC}.
Clearly, choosing $r:=1$ and $D:=\R^{p}_-\times\{0^q\}$ with $p,q\in\mathbb N$ and $m:=p+q$,
any standard nonlinear program is a disjunctive program, see \cref{ex:standard_nonlinear_programming}. 
However, it is well known that the model \eqref{eq:MPDC} covers 
\emph{mathematical programs with complementarity constraints} (MPCCs), see \cite{LuoPangRalph1996}, 
\emph{mathematical programs with vanishing constraints} (MPVCs), see \cite{AchtzigerKanzow2008},
\emph{cardinality-constrained mathematical programs} (CCMPs), see \cite{PanXiuFan2017}, 
and \emph{mathematical programs with switching constraints} (MPSCs), see \cite{Mehlitz2018},
as well.
Note that all these problem classes which frequently arise from the mathematical modeling of real-world
applications suffer from an inherent lack of regularity. That is why huge effort has been put into the
derivation of problem-tailored stationarity notions and constraint qualifications. However, it is clear
that any theoretical result which can be derived for the generalized model \eqref{eq:MPDC} 
has a corresponding counterpart for MPCCs, MPVCs, CCMPs, and MPSCs. This observation justifies the theoretical
investigation of \eqref{eq:MPDC}.
First ideas on how to study disjunctive structures in nonlinear optimization are presented in \cite{Scholtes2004}.
Stationarity notions and constraint qualifications for \eqref{eq:MPDC} can be found in
\citep{BenkoGfrerer2018,FlegelKanzowOutrata2007,Gfrerer2014}. 
Particularly, second-order necessary and sufficient optimality conditions for disjunctive programs are
derived in \cite{Gfrerer2014} with the aid of the celebrated directional limiting calculus, 
see \cite{Gfrerer2013} as well. Checking \cite[Theorems~3.3, 3.17]{Gfrerer2014}, one can observe that in
contrast to classical second-order optimality conditions, the appearing set of multipliers depends on the
choice of the particular critical direction.

In this paper, we are going to state an MPDC-tailored version of the \emph{linear independence constraint qualification}
(LICQ)
and study its inherent properties. Furthermore, we derive second-order necessary optimality conditions for \eqref{eq:MPDC}
under validity of this constraint qualification
in a completely elementary way using second-order tangent sets. Thus, our approach is related to techniques
which were used in \cite{BonnansShapiro2000,ChristofWachsmuth2018,Penot1998,RockafellarWets1998} to derive 
second-order conditions for mathematical programs.  
On the other hand, we present a result which shows the isolatedness of strongly stationary
points of \eqref{eq:MPDC} where the problem-tailored version of LICQ and a suitable second-order
sufficient condition hold. This generalizes some corresponding results for MPCCs, see
\cite{GuoLinYe2013}, and CCMPs, see \cite[Corollary~3.3]{BucherSchwartz2018}. 
Afterwards, we apply our findings to several instances of disjunctive programming.
In particular, new second-order optimality conditions for MPSCs will be derived.

The remaining parts of this paper are structured as follows: In \cref{sec:preliminaries}, we comment on
the notation used in this manuscript and introduce all the necessary tools from variational analysis which
are exploited later. Furthermore, some preliminary results are provided. \cref{sec:MPDC_LICQ} is dedicated
to the derivation of an MPDC-tailored version of the linear independence constraint qualification.
Some consequences of the validity of this regularity condition are presented. 
Second-order optimality conditions for \eqref{eq:MPDC} are the topic of interest in \cref{sec:second_order_conditions}.
First, it will be shown that a second-order necessary optimality condition holds at the local minimizers
of \eqref{eq:MPDC} where our new constraint qualification is valid. Afterwards, a second-order sufficient
optimality condition for \eqref{eq:MPDC} will be derived. Subsequently, we show that this condition together with
the problem-tailored version of LICQ implies that the underlying strongly stationary point of interest 
is in a certain sense locally isolated.
In \cref{sec:application}, we apply the derived theory to MPCCs, MPVCs, as well as CCMPs and compare our findings to
available results from the literature, see 
\cite{ScheelScholtes2000,HoheiselKanzow2007,BucherSchwartz2018}. 
Furthermore, we obtain new second-order optimality conditions for MPSCs.
Some final remarks close the paper in \cref{sec:conclusions}.

\section{Preliminaries}\label{sec:preliminaries}

\subsection{Basic notation}\label{sec:basic_notation}

Throughout this paper, $x\cdot y$ is used to denote the common Euclidean inner product of two
vectors $x,y\in\R^n$. We equip $\R^n$ with the Euclidean norm $\norm{\cdot}{2}$. 
The zero vector in $\R^n$ will be denoted by $0^n$ while $0$ is used to represent 
the scalar zero. For $\varepsilon>0$
and some $\bar x\in\R^n$, 
$\mathbb B^\varepsilon(\bar x):=\{x\in\R^n\,|\,\norm{x-\bar x}{2}\leq\varepsilon\}$
denotes the closed $\varepsilon$-ball around $\bar x$. Similarly, 
$\mathbb U^\varepsilon(\bar x):=\{x\in\R^n\,|\,\norm{x-\bar x}{2}<\varepsilon\}$
represents the open $\varepsilon$-ball around $\bar x$.
Frequently, we will make use of the sets $\R_+:=\{t\in\R\,|\,t\geq 0\}$
and $\R_-:=\{t\in\R\,|\,t\leq 0\}$. For a given nonempty set $A\subset\R^n$, we exploit
$\cl A$, $\cone A$, $\conv A$, $\clconv A$, and $\spa A$ in order to represent the closure of $A$,
the conic hull of $A$, 
the convex hull of $A$, the closed convex hull of $A$, and the span of $A$
(i.e.\ the smallest subspace of $\R^n$ comprising $A$), respectively. 
We use $\dist(x,A):=\inf\{\norm{z-x}{2}\,|\,z\in A\}$ to represent the distance
of $x\in\R^n$ to $A$. 
Finally, the Cartesian product $A\times B$ of two sets $A\subset\R^n$ and $B\subset \R^m$
will be interpreted as a subset of $\R^{n+m}$.

Recall that a set-valued mapping $\Psi\colon\R^n\rightrightarrows\R^m$, i.e.\ a mapping which
assigns to each $x\in\R^n$ a (possibly empty) set $\Psi(x)\subset\R^m$, is called metrically
subregular at some point $(\bar x,\bar y)\in\{(x,y)\in\R^n\times\R^m\,|\,y\in\Psi(x)\}$ if
there are constants $\kappa>0$ and $\varepsilon>0$ such that
\[
	\forall x\in\mathbb U^\varepsilon(\bar x)\colon\quad
		\dist(x,\Psi^{-1}(\bar y))\leq\kappa\dist(\bar y,\Psi(x))
\]
holds true. Here, $\Psi^{-1}\colon\R^m\rightrightarrows\R^n$ denotes the inverse set-valued
mapping associated with $\Psi$ which is defined by $\Psi^{-1}(y):=\{x\in\R^n\,|\,y\in\Psi(x)\}$ for
all $y\in\R^m$. It is easily seen that $\Psi$ is metrically subregular at $(\bar x,\bar y)$
if and only if $\Psi^{-1}$ possesses the so-called calmness property at $(\bar y,\bar x)$,
see e.g.\ \cite{HenrionOutrata2005}.

For a twice continuously differentiable mapping $P\colon\R^n\to\R^m$,
$\nabla P(x)\in\R^{m\times n}$ denotes its Jacobian at $x\in\R^n$.
In the particular case $m=1$, the gradient $\nabla P(x)$ will be interpreted as
a column vector.
Furthermore, we set
\[
	\forall x\in\R^n\,\forall d,h\in\R^n\colon\quad
	\nabla^2P(x)[d,h]:=\begin{pmatrix}
		d^\top\nabla^2P_1(x)h\\\vdots\\d^\top\nabla^2P_m(x)h
	\end{pmatrix}
\]
where $P_1,\ldots,P_m\colon\R^n\to\R$ are the component mappings
associated with $P$ while the matrices $\nabla^2P_1(x),\ldots,\nabla^2P_m(x)$ are their respective
Hessians at $x\in\R^n$.
\subsection{Variational analysis}\label{sec:variational_analysis}

Here, we introduce the notions of variational analysis which are necessary in order to carry out
our later considerations. For terminology and notation, we mainly follow 
\cite{AubinFrankowska2009,BonnansShapiro2000,RockafellarWets1998}.

\subsubsection{Polars and annihilators}\label{sec:polars}
For a nonempty set $A\subset\R^n$, the polar cone and the annihilator of $A$ are given as stated below:
\[
	A^\circ:=\{y\in\R^n\,|\,\forall x\in A\colon\,x\cdot y\leq 0\},\qquad
	A^\perp:=\{y\in\R^n\,|\,\forall x\in A\colon\,x\cdot y=0\}.
\]
Obviously, $A^\circ$ is a closed, convex cone and one has $A^\perp=A^\circ\cap(-A)^\circ$, i.e.\ $A^\perp$
is a subspace of $\R^n$. For any two sets $A,B\subset\R^n$, one easily obtains
$(A\cup B)^\circ=A^\circ\cap B^\circ$ as well as
$(A\cup B)^\perp = A^\perp\cap B^\perp$. 
For a cone $C\subset\R^n$, the so-called bipolar theorem, see \cite[Corollary~6.21]{RockafellarWets1998},
shows $C^{\circ\circ}=\clconv C$. Furthermore, the polarization rule $(C_1+C_2)^\circ=C_1^\circ\cap C_2^\circ$ follows
for any two cones $C_1,C_2\subset\R^n$, see \cite[Section~2.1.4]{BonnansShapiro2000}. 
If $C_1,C_2$ are additionally, closed and convex, we have $(C_1\cap C_2)^\circ=\cl(C_1^\circ+C_2^\circ)$.
Particularly, for subspaces $L_1,L_2\subset\R^n$, $(L_1\cap L_2)^\perp=L_1^\perp+L_2^\perp$ is valid since
each subspace of $\R^n$ is closed.
Supposing that $K\subset\R^n$ is a closed, convex cone, one obtains
$K^{\circ\perp}=K\cap(-K)$ from the bipolar theorem, 
i.e.\ $K^{\circ\perp}$ coincides with the so-called lineality space of $K$ which
is the largest subspace contained in $K$. Additionally,
\begin{align*}
	K^{\perp\perp}
		&=(K^\circ\cap(-K)^\circ)^\perp=(K^\circ\cap(-K)^\circ)^\circ\\
		&=\cl(K^{\circ\circ}+(-K)^{\circ\circ})	=\cl(K-K)=K-K=\spa K
\end{align*}
follows from the calculation rules provided above.

\subsubsection{Tangent and normal cones}

Let $A\subset\R^n$ be closed and fix an arbitrary point $\bar x\in A$. The closed cones
\begin{align*}
	\mathcal T_A(\bar x)&:=\left\{
							d\in\R^n\,\middle|\,
								\begin{aligned}
									&\exists\{t_k\}_{k\in\N}\subset\R_+\,\exists\{d_k\}_{k\in\N}\subset\R^n\colon\\
									&\qquad	t_k\downarrow 0,\,d_k\to d,\,\bar x+t_kd_k\in A\,\forall k\in\mathbb N
								\end{aligned}
							\right\},\\
	\mathcal T^\flat_A(\bar x)&:=\left\{
							d\in\R^n\,\middle|\,
								\begin{aligned}
									&\forall\{t_k\}_{k\in\N}\subset\R_+,\,t_k\downarrow 0\\
									&\qquad	\exists\{d_k\}_{k\in\N}\subset\R^n\colon\,d_k\to d,\,\bar x+t_kd_k\in A\,\forall k\in\mathbb N
								\end{aligned}
							\right\},\\
	\mathcal T_A^\textup{c}(\bar x)&:=\left\{
							d\in\R^n\,\middle|\,
								\begin{aligned}
									&\forall \{t_k\}_{k\in\N}\subset\R_+,\,t_k\downarrow 0\,\forall\{x_k\}_{k\in\N}\subset A,\,x_k\to\bar x\\
									&\qquad \,\exists\{d_k\}_{k\in\N}\subset\R^n\colon
									d_k\to d,\,x_k+t_kd_k\in A\,\forall k\in\mathbb N
								\end{aligned}
							\right\}
\end{align*}
are called the tangent (or Bouligand) cone, the inner (or adjacent) tangent cone, and the Clarke 
tangent cone to $A$ at $\bar x$, respectively. By definition, we always have the inclusions
$\mathcal T_A^\textup c(\bar x)\subset \mathcal T_A^\flat(\bar x)\subset \mathcal T_A(\bar x)$, and all these
cones coincide whenever $A$ is convex. If we have $\mathcal T_A^\flat(\bar x)= \mathcal T_A(\bar x)$, then $A$
is said to be derivable at $\bar x$. We say that $A$ is derivable if it is derivable at each of its points.
The cone $\widehat{\mathcal N}_A(\bar x):=\mathcal T_A(\bar x)^\circ$ is referred to as
Fr\'{e}chet (or regular) normal cone. By definition, it is closed and convex.
Furthermore, we exploit the limiting (or Mordukhovich) normal cone to
$A$ at $\bar x$ which is given by
\[
	\mathcal N_A(\bar x)=\left\{\eta\in\R^n\,\middle|\,
		\begin{aligned}
			&\exists \{x_k\}_{k\in\N}\subset A\,\exists \{\eta_k\}_{k\in\N}\subset\R^n\colon\\
			&\qquad	\,x_k\to\bar x,\,\eta_k\to\eta,\,\eta_k\in\widehat{\mathcal N}_A(x_k)\,\forall k\in\mathbb N
		\end{aligned}
	\right\}.
\]
Using the notion of the Painlevé-Kuratowski-limit, see e.g.\ \cite[Section~4.B]{RockafellarWets1998}, we have
\[
	\mathcal N_A(\bar x)=\limsup\limits_{x\to\bar x,\,x\in A}\widehat{\mathcal N}_A(x).
\]
Clearly, $\widehat{\mathcal N}_A(\bar x)\subset\mathcal N_A(\bar x)$ holds and both cones coincide whenever $A$ is convex.
In case where $A$ is a closed, convex cone, we obtain $\widehat{\mathcal N}_A(\bar x)=A^\circ\cap\{\bar x\}^\perp$.
For formal completeness, we set $\mathcal T_A(x)=\mathcal T^\flat_A(x)=\mathcal T^\textup{c}_A(x):=\varnothing$
and $\widehat{\mathcal N}_A(x)=\mathcal N_A(x):=\varnothing$ for each $x\notin A$.
\begin{lemma}\label{lem:Frechet_normals_to_polyhedral_set}
	Let $Q\subset\R^n$ be a polyhedral set and fix $\bar x\in Q$.
	Then, there is some $\varepsilon>0$ such that we have
	\[
		\forall x\in Q\cap\mathbb U^\varepsilon(\bar x)\colon\quad
		\widehat{\mathcal N}_Q(x)=\widehat{\mathcal N}_Q(\bar x)\cap\{x-\bar x\}^\perp.
	\]
\end{lemma}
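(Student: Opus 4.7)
The plan is to exploit a halfspace description of $Q$ and track how the active-set changes as $x$ moves near $\bar x$. Write $Q=\{x\in\R^n\,|\,a_i\cdot x\leq b_i,\ i=1,\dots,p\}$ for suitable $a_1,\dots,a_p\in\R^n$ and $b_1,\dots,b_p\in\R$, and put $I(x):=\{i\,|\,a_i\cdot x=b_i\}$. Since $Q$ is convex, $\widehat{\mathcal N}_Q(x)=\mathcal N_Q(x)$ at every $x\in Q$, and standard convex analysis identifies this normal cone with the conic hull $\cone\{a_i\,|\,i\in I(x)\}$.

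First I would pick $\varepsilon>0$ so small that $a_i\cdot x<b_i$ for every $i\notin I(\bar x)$ and every $x\in\mathbb U^\varepsilon(\bar x)$; this is possible by continuity because the inactive constraints at $\bar x$ are strict. Consequently, for any $x\in Q\cap \mathbb U^\varepsilon(\bar x)$ one has $I(x)\subseteq I(\bar x)$, and since $a_i\cdot\bar x=b_i$ for $i\in I(\bar x)$, the refinement
\[
    I(x)=\{i\in I(\bar x)\,|\,a_i\cdot(x-\bar x)=0\}
\]
is immediate.

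The heart of the argument is then to identify $\cone\{a_i\,|\,i\in I(x)\}$ with $\widehat{\mathcal N}_Q(\bar x)\cap\{x-\bar x\}^\perp$. The inclusion ``$\subset$'' is obvious since each $a_i$ with $i\in I(x)$ lies in both sets. For the converse, take $\eta\in\widehat{\mathcal N}_Q(\bar x)\cap\{x-\bar x\}^\perp$ and write $\eta=\sum_{i\in I(\bar x)}\lambda_ia_i$ with $\lambda_i\geq 0$. The key observation is that $a_i\cdot(x-\bar x)\leq 0$ for every $i\in I(\bar x)$, because $x\in Q$ and $a_i\cdot\bar x=b_i$. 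Hence each summand in
\[
    0=\eta\cdot(x-\bar x)=\sum_{i\in I(\bar x)}\lambda_i\,a_i\cdot(x-\bar x)
\]
is nonpositive, which forces $\lambda_i=0$ whenever $a_i\cdot(x-\bar x)<0$, i.e.\ whenever $i\in I(\bar x)\setminus I(x)$. Thus $\eta\in\cone\{a_i\,|\,i\in I(x)\}$, and the proof is complete.

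The only potentially delicate point is the representation of $\widehat{\mathcal N}_Q(x)$ as $\cone\{a_i\,|\,i\in I(x)\}$, which is a classical consequence of polyhedrality (essentially Farkas' lemma applied at $x$); beyond that, everything is linear-algebraic bookkeeping, so no real obstacle is expected.
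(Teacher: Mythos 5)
Your proof is correct, but it takes a genuinely different route from the paper's. You work directly with a halfspace representation $Q=\{x\,|\,a_i\cdot x\leq b_i\}$, invoke the classical Farkas-type formula $\widehat{\mathcal N}_Q(x)=\cone\{a_i\,|\,i\in I(x)\}$, and then run an explicit complementarity argument: for $\eta=\sum_{i\in I(\bar x)}\lambda_ia_i$ with $\eta\cdot(x-\bar x)=0$ and each $a_i\cdot(x-\bar x)\leq 0$, the nonnegative weights on strictly inactive constraints must vanish. The paper instead avoids any representation of $Q$: it uses the fact (\cite[Exercise~6.47]{RockafellarWets1998}) that a polyhedral set locally coincides with the translated tangent cone, $Q\cap\mathbb U^\varepsilon(\bar x)=(\{\bar x\}+\mathcal T_Q(\bar x))\cap\mathbb U^\varepsilon(\bar x)$, and then applies the identity $\widehat{\mathcal N}_K(y)=K^\circ\cap\{y\}^\perp$ for a closed convex cone $K$, which yields the claim in two lines. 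Your version is more self-contained modulo the normal-cone formula for polyhedra (which is itself the content of \cite[Theorem~6.46]{RockafellarWets1998}) and makes the active-set combinatorics $I(x)=\{i\in I(\bar x)\,|\,a_i\cdot(x-\bar x)=0\}$ explicit, which can be useful in its own right; the paper's version is shorter and representation-free. Both hinge on the same choice of $\varepsilon$: small enough that the inactive structure at $\bar x$ persists, i.e.\ that $Q$ looks conical near $\bar x$. One cosmetic remark: in your inclusion ``$\subset$'' you should note that $\widehat{\mathcal N}_Q(\bar x)\cap\{x-\bar x\}^\perp$ is a convex cone, so containing each generator $a_i$, $i\in I(x)$, indeed gives containment of their conic hull; this is immediate but worth a word.
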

\begin{proof}
	Since $Q$ is polyhedral, \cite[Exercise~6.47]{RockafellarWets1998} yields the existence of
	$\varepsilon>0$ such that 
	$Q\cap\mathbb U^\varepsilon(\bar x)=(\{\bar x\}+\mathcal T_Q(\bar x))\cap\mathbb U^\varepsilon(\bar x)$
	is valid. 
	Now, fix an arbitrary point $x\in Q\cap\mathbb U^\varepsilon(\bar x)$. Noting that $x$ is an interior point
	of $\mathbb U^\varepsilon(\bar x)$, we obtain
	\begin{align*}
		\widehat{\mathcal N}_Q(x)
		&=
		\widehat{\mathcal N}_{Q\cap\mathbb U^\varepsilon(\bar x)}(x)
		=
		\widehat{\mathcal N}_{(\{\bar x\}+\mathcal T_Q(\bar x))\cap\mathbb U^\varepsilon(\bar x)}(x)
		=
		\widehat{\mathcal N}_{\{\bar x\}+\mathcal T_Q(\bar x)}(x)\\
		&=
		\widehat{\mathcal N}_{\mathcal T_Q(\bar x)}(x-\bar x)
		=
		\mathcal T_Q(\bar x)^\circ\cap\{x-\bar x\}^\perp
		=
		\widehat{\mathcal N}_Q(\bar x)\cap\{x-\bar x\}^\perp
	\end{align*}
	from the fact that $\mathcal T_Q(\bar x)$ is a closed, convex cone.
	This completes the proof.
\end{proof}
\begin{lemma}\label{lem:cones_to_disjunctive_structures}
	Let $A:=\bigcup_{i=1}^rA_i$ be the finite union of closed sets $A_1,\ldots,A_r\subset\R^n$, choose $\bar x\in A$, and set
	$I(\bar x):=\{i\in\{1,\ldots,r\}\,|\,\bar x\in A_i\}$.
	Then, one has
	\[
		\begin{aligned}
		\mathcal T_A(\bar x)&=\bigcup\limits_{i\in I(\bar x)}\mathcal T_{A_i}(\bar x),&\qquad
		\mathcal T_A^\flat(\bar x)&\supset\bigcup\limits_{i\in I(\bar x)}\mathcal T_{A_i}^\flat(\bar x),&\qquad
		\mathcal T_A^\textup c(\bar x)&\supset\bigcap\limits_{i\in I(\bar x)}\mathcal T_{A_i}^\textup{c}(\bar x),&\\
		\widehat{\mathcal N}_A(\bar x)&=\bigcap\limits_{i\in I(\bar x)}\widehat{\mathcal N}_{A_i}(\bar x),&\qquad
		\mathcal N_A(\bar x)&\subset\bigcup\limits_{i\in I(\bar x)}\mathcal N_{A_i}(\bar x).& &&
		\end{aligned}
	\]
	If, additionally, the sets $A_1,\ldots,A_r$ are convex, then $A$ is derivable.
	Furthermore, we particularly have
	\[
		\mathcal N_A(\bar x)\subset\bigcup\limits_{i\in I(\bar x)}\widehat{\mathcal N}_{A_i}(\bar x).
	\]
\end{lemma}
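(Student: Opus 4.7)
The plan is to first establish a \emph{localization} of the union: since each $A_i$ is closed and $\bar x \notin A_i$ for $i \notin I(\bar x)$, there exists $\varepsilon > 0$ with $A \cap \mathbb U^\varepsilon(\bar x) = \bigcup_{i \in I(\bar x)} A_i \cap \mathbb U^\varepsilon(\bar x)$. This reduces all subsequent arguments to the finite union indexed by $I(\bar x)$, and in particular guarantees that every sequence $x_k \to \bar x$ in $A$ eventually lies in $\bigcup_{i \in I(\bar x)} A_i$. For the tangent cone identity, the inclusion $\supset$ is immediate since $A_i \subset A$; for $\subset$, given $d \in \mathcal T_A(\bar x)$ with witnessing $t_k \downarrow 0$ and $d_k \to d$ such that $\bar x + t_k d_k \in A$, pigeonholing the finite index set $I(\bar x)$ yields a subsequence lying entirely in one $A_{i_0}$, hence $d \in \mathcal T_{A_{i_0}}(\bar x)$.

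The inner tangent inclusion is a direct unpacking of definitions, since for $d \in \mathcal T_{A_i}^\flat(\bar x)$ and any $t_k \downarrow 0$ the requested $d_k$ satisfies $\bar x + t_k d_k \in A_i \subset A$. The Clarke cone requires more care because the sequences $x_k \in A$ are prescribed and need not lie in a single $A_i$: I would partition the indices $k$ according to which $A_i$, $i \in I(\bar x)$, contains $x_k$; on each of the finitely many resulting subsequences, $d \in \mathcal T_{A_i}^{\textup c}(\bar x)$ supplies a sequence $d_k^{(i)} \to d$ with $x_k + t_k d_k^{(i)} \in A_i \subset A$, and concatenating produces a single $d_k \to d$ with $x_k + t_k d_k \in A$. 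This reassembly step is the main technical point in the lemma.

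The Fréchet identity follows from polarizing the tangent cone identity via $(B_1 \cup B_2)^\circ = B_1^\circ \cap B_2^\circ$, as recalled in Section~2.2.1. For the Mordukhovich inclusion, take $\eta_k \in \widehat{\mathcal N}_A(x_k)$ with $x_k \to \bar x$ in $A$ and $\eta_k \to \eta$; pigeonhole to a subsequence with $x_k \in A_{i_0}$ for a single $i_0 \in I(\bar x)$, and apply the already-established Fréchet identity at $x_k$ to obtain $\widehat{\mathcal N}_A(x_k) \subset \widehat{\mathcal N}_{A_{i_0}}(x_k)$, whence $\eta \in \mathcal N_{A_{i_0}}(\bar x)$ by the Painlevé--Kuratowski characterization.

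For the convex addendum, since each $A_i$ is convex it is derivable, so $\mathcal T_{A_i}^\flat(\bar x) = \mathcal T_{A_i}(\bar x)$. Combining this with the inner tangent inclusion and the tangent cone identity yields $\mathcal T_A(\bar x) \subset \mathcal T_A^\flat(\bar x)$, and the reverse inclusion always holds, so equality persists at every point of $A$; thus $A$ is derivable. The refined normal cone inclusion then drops out of the Mordukhovich bound together with $\mathcal N_{A_i}(\bar x) = \widehat{\mathcal N}_{A_i}(\bar x)$ for convex $A_i$.
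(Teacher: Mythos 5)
Your proposal is correct and follows essentially the same route as the paper: localization of the union near $\bar x$, pigeonholing over the finite index set for the tangent and limiting normal cones, polarization for the Fr\'{e}chet identity, and reassembly over finitely many subsequences for the Clarke cone. The only difference is cosmetic — the paper delegates the tangent and inner tangent cone formulas to \cite[Tables~4.1 and 4.2]{AubinFrankowska2009} and phrases the limiting normal cone step via Painlev\'{e}--Kuratowski limits rather than explicit sequences, while you supply the elementary arguments directly.
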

\begin{proof}
	The formulas for the tangent and the inner tangent cone can be found in 
	\cite[Tables~4.1 and 4.2]{AubinFrankowska2009}.
	Furthermore, the identity for the Fr\'{e}chet normal cone follows from
	the formula for the tangent cone by polarization. 
	The inclusion for the Clarke tangent cone follows by definition of this cone
	while observing that $A$ is the union of only finitely many sets.
	For the proof of the inclusion involving the limiting normal cone, observe that due to the closedness of all the sets
	$A_1,\ldots,A_r$, there is some ball $\mathbb B^\varepsilon(\bar x)$
	such that $I(x)\subset I(\bar x)$ holds for all $x\in A\cap\mathbb B^\varepsilon(\bar x)$. This yields
	\begin{align*}
		\mathcal N_A(\bar x)
			&=\limsup\limits_{x\to\bar x,\,x\in A}\widehat{\mathcal N}_A(x)
			 =\limsup\limits_{x\to\bar x,\,x\in A}\bigcap\limits_{i\in I(x)}\widehat{\mathcal N}_{A_i}(x)\\
			&\subset\limsup\limits_{x\to\bar x,\,x\in A}\bigcup\limits_{i\in I(x)}\widehat{\mathcal N}_{A_i}(x)
			 \subset\limsup\limits_{x\to\bar x,\,x\in A}\bigcup\limits_{i\in I(\bar x)}\widehat{\mathcal N}_{A_i}(x)\\
			&=\bigcup\limits_{i\in I(\bar x)}\limsup\limits_{x\to\bar x,\,x\in A_i}\widehat{\mathcal N}_{A_i}(x)
			 =\bigcup\limits_{i\in I(\bar x)}\mathcal N_{A_i}(\bar x).
	\end{align*}
	
	Now, assume that $A_1,\ldots,A_r$ are convex. From above, we obtain
	\[
		\mathcal T_A(\bar x)
		=
		\bigcup\limits_{i\in I(\bar x)}\mathcal T_{A_i}(\bar x)
		=
		\bigcup\limits_{i\in I(\bar x)}\mathcal T_{A_i}^\flat(\bar x)
		\subset
		\mathcal T_A^\flat(\bar x)
		\subset
		\mathcal T_A(\bar x)
	\]
	for each $\bar x\in A$ since each convex set is derivable. 
	Thus, $A$ is derivable. Taking the above upper estimate for the
	limiting normal cone in mind, the final formula of the lemma
	follows by convexity of $A_1,\ldots,A_r$.
\end{proof}

\begin{lemma}\label{lem:stability_property}
	Let $S:=\bigcup_{i=1}^rS_i$ be the finite union of polyhedral sets $S_1,\ldots,S_r\subset\R^n$.
	Fix a sequence $\{x_k\}_{k\in\N}\subset S$ converging to $\bar x\in\R^n$. 
	For each $k\in\N$, let $\lambda_k\in\widehat{\mathcal N}_S(x_k)$ be chosen such that
	$\lambda_k\to\bar\lambda$ holds true for some $\bar\lambda\in\R^n$.	
	Then, we have 
	$\lambda_k\cdot(x_k-\bar x)=\bar\lambda\cdot(x_k-\bar x)=0$ 
	for all sufficiently large $k\in\N$.

\end{lemma}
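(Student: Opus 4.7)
The plan is to combine the two preceding lemmas: first, using \cref{lem:cones_to_disjunctive_structures,lem:Frechet_normals_to_polyhedral_set} to reduce the Fréchet normal cone of the union $S$ at $x_k$ to the intersection of certain polyhedral pieces, which immediately yields $\lambda_k\cdot(x_k-\bar x)=0$; and second, exploiting the fact that a polyhedron has only finitely many distinct Fréchet normal cones on a neighborhood of $\bar x$ to control $\bar\lambda\cdot(x_k-\bar x)$ via a finite subsequence argument.

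First, $S$ is closed as the finite union of the closed sets $S_1,\ldots,S_r$, so $\bar x\in S$, and by \cref{lem:cones_to_disjunctive_structures} there is a ball $\mathbb B^\varepsilon(\bar x)$ on which $I(x)\subseteq I(\bar x)$ for every $x\in S\cap\mathbb B^\varepsilon(\bar x)$ and $\widehat{\mathcal N}_S(x)=\bigcap_{i\in I(x)}\widehat{\mathcal N}_{S_i}(x)$. Applying \cref{lem:Frechet_normals_to_polyhedral_set} to each $S_i$ with $i\in I(\bar x)$ and shrinking $\varepsilon$ if necessary, I would get
\[
\widehat{\mathcal N}_{S_i}(x)=\widehat{\mathcal N}_{S_i}(\bar x)\cap\{x-\bar x\}^\perp
\qquad\text{for all }x\in S_i\cap\mathbb U^\varepsilon(\bar x),\ i\in I(\bar x).
\]
For $k$ large enough that $x_k\in\mathbb U^\varepsilon(\bar x)$ and each $i\in I(x_k)$, this containment in $\{x_k-\bar x\}^\perp$ gives $\lambda_k\cdot(x_k-\bar x)=0$, settling the first of the two equalities.

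The more delicate part is the corresponding assertion for $\bar\lambda$, since $\bar\lambda$ need not belong to $\widehat{\mathcal N}_S(x_k)$. Here the key observation is structural: a polyhedral set has finitely many faces and its Fréchet normal cone is constant on the relative interior of each face, so as $x$ varies over $S_i\cap\mathbb U^\varepsilon(\bar x)$ the cone $\widehat{\mathcal N}_{S_i}(x)$ attains only finitely many values. Consequently the finite collection of data $\bigl(I(x_k),(\widehat{\mathcal N}_{S_i}(x_k))_{i\in I(x_k)}\bigr)$ takes only finitely many distinct values along $k$, and I would partition $\N$ into the corresponding finitely many subsequences. On a single such subsequence, $I(x_k)=J$ and the cones $N_i:=\widehat{\mathcal N}_{S_i}(x_k)$ for $i\in J$ are independent of $k$; closedness of $\bigcap_{i\in J}N_i$ combined with $\lambda_k\to\bar\lambda$ forces $\bar\lambda\in\bigcap_{i\in J}N_i$. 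But by the previous paragraph, for every $k$ in the subsequence we have $N_i=\widehat{\mathcal N}_{S_i}(\bar x)\cap\{x_k-\bar x\}^\perp\subseteq\{x_k-\bar x\}^\perp$, so $\bar\lambda\cdot(x_k-\bar x)=0$ on the whole subsequence. Since only finitely many subsequences arise, the equality holds for all sufficiently large $k\in\N$.

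The main obstacle I foresee is exactly this second step: one has to notice that, although $\bar\lambda$ need not be a Fréchet normal at any $x_k$, the finite-face structure of polyhedra makes the map $k\mapsto\widehat{\mathcal N}_{S_i}(x_k)$ take only finitely many values, which is what allows one to pass the orthogonality property from $\lambda_k$ to the limit $\bar\lambda$ uniformly in $k$ rather than only asymptotically.
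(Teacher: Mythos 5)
Your proof is correct and follows essentially the same route as the paper: the first equality comes from \cref{lem:Frechet_normals_to_polyhedral_set} combined with $I(x_k)\subseteq I(\bar x)$, and the second rests on the same key finiteness fact, namely that a polyhedron admits only finitely many distinct Fr\'{e}chet normal cones near $\bar x$. Your organization of the second step (partitioning the tail of the sequence into finitely many classes on which the normal-cone data is constant, rather than extracting a subsequence for each fixed $k_0$ as the paper does) is a clean equivalent; just note in passing that finite classes are harmless since they are exhausted for large $k$.
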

\begin{proof}
	Let us set $I(x):=\{i\in\{1,\ldots,r\}\,|\,x\in S_i\}$ for each $x\in S$, 
	see \cref{lem:cones_to_disjunctive_structures}.
	Due to the closedness of $S$, $\bar x\in S$ is valid.
	Exploiting the closedness of $S_1,\ldots,S_r$ as well as the convergence $x_k\to\bar x$, 
	the inclusion $I(x_k)\subset I(\bar x)$ needs to be valid for all sufficiently large $k\in\N$. 
	Particularly, for each large enough $k_0\in\N$, there is an index $i(k_0)\in I(\bar x)$ 
	such that $x_{k_0}\in S_{i(k_0)}$
	holds. Now, we can exploit \cref{lem:Frechet_normals_to_polyhedral_set} 
	in order to see the relation
	$\lambda_{k_0}\in\widehat{\mathcal N}_{S_{i(k_0)}}(x_{k_0})=\widehat{\mathcal N}_{S_{i(k_0)}}(\bar x)\cap\{x_{k_0}-\bar x\}^\perp$
	for large enough $k_0\in\N$, i.e.\ $\lambda_{k_0}\cdot(x_{k_0}-\bar x)=0$ follows.
	
	Noting that $S$ is a finite union, for sufficiently large $k_0\in\N$, there is a subsequence 
	$\{x_{k_l}\}_{l\in\N}$ of $\{x_k\}_{k\in\N}$ with $k_l\geq k_0$ for all $l\in\N$ and some
	index $i(k_0)\in I(x_{k_0})$ such that
	$i(k_0)\in I(x_{k_l})\subset I(\bar x)$ is valid for all $l\in\N$. 
	For large enough $l\in\N$, we particularly obtain
	$\lambda_{k_l}\in\widehat{\mathcal N}_{S_{i(k_0)}}(x_{k_l})$
	from \cref{lem:cones_to_disjunctive_structures}.
	Noting that there exist	only finitely many different Fr\'{e}chet normal cones to a polyhedral set, 
	we find a subsequence $\{x_{k_{l_\nu}}\}_{\nu\in\N}$ of $\{x_{k_l}\}_{l\in\N}$ and a polyhedral cone $K\subset\R^n$ such that
	$K=\widehat{\mathcal N}_{S_{i(k_0)}}(x_{k_{l_\nu}})$ holds for all $\nu\in\N$, and for large enough $k_0\in\N$, we can even
	guarantee $K=\widehat{\mathcal N}_{S_{i(k_0)}}(x_{k_0})$. Particularly,	$\lambda_{k_{l_\nu}}\in K$ follows for all $\nu\in\N$. 
	Noting that $\lambda_{k_{l_\nu}}\to\bar \lambda$ holds as $\nu\to\infty$,
	we have $\bar\lambda\in K$ by closedness of $K$. Finally, we observe that
	$K=\widehat{\mathcal N}_{S_{i(k_0)}}(x_{k_0})=\widehat{\mathcal N}_{S_{i_{k_0}}}(\bar x)\cap\{x_{k_0}-\bar x\}^\perp$
	holds due to \cref{lem:Frechet_normals_to_polyhedral_set}. 
	This shows $\bar\lambda\cdot(x_{k_0}-\bar x)=0$ for large enough $k_0\in\N$.
\end{proof}

\subsubsection{Second-order tangent sets}

For the consideration of second-order optimality conditions, we exploit so-called second-order tangent sets.
Therefore, let $A\subset\R^n$ be a closed set and fix $\bar x\in A$ as well as $d\in \mathcal T_A(\bar x)$.
The closed sets
\begin{align*}
	\mathcal T_A^2(\bar x;d)&:=	
		\left\{
			h\in\R^n\,\middle|\,
				\begin{aligned}
					&\exists\{t_k\}_{k\in\N}\subset\R_+\,\exists\{h_k\}_{k\in\N}\subset\R^n\colon\\
					&\qquad t_k\downarrow 0,\,h_k\to h,\,\bar x+t_kd+\tfrac12t_k^2h_k\in A\,\forall k\in\N
				\end{aligned}
		\right\},\\
	\mathcal T_A^{\flat,2}(\bar x;d)&:=	
		\left\{
			h\in\R^n\,\middle|\,
				\begin{aligned}
					&\forall\{t_k\}_{k\in\N}\subset\R_+,\,t_k\downarrow 0\\
					&\qquad \exists\{h_k\}_{k\in\N}\subset\R^n\colon\, h_k\to h,\,\bar x+t_kd+\tfrac12t_k^2h_k\in A\,\forall k\in\N
				\end{aligned}
		\right\}
\end{align*}
are called outer (Bouligand) and inner (adjacent) second-order tangent set to $A$ at $\bar x$ in direction $d$,
see e.g.\ \cite[Definition~3.28]{BonnansShapiro2000}. Note that these sets are not conic in general.
For $\tilde d\notin\mathcal T_A(\bar x)$, we set 
$\mathcal T^2_A(\bar x;\tilde d)=\mathcal T^{\flat,2}_A(\bar x,\tilde d):=\varnothing$
for formal completeness.
Clearly, we always have $\mathcal T_A^{\flat,2}(\bar x;d)\subset\mathcal T_A^2(\bar x;d)$. 
If equality holds, then $A$ is called parabolically derivable at $\bar x$ in direction $d$. We say that $A$ is
parabolically derivable if it is parabolically derivable at each point $x\in A$ in each direction $d\in \mathcal T_A(x)$.
Note that even convex sets are not parabolically derivable in general. However, it follows from
\cite[Proposition~3.34]{BonnansShapiro2000} that each polyhedral set $Q\subset\R^n$ is parabolically derivable and
it holds
\begin{equation}\label{eq:polyhedral_set_parabolically_derivable}
	\forall x\in Q\,\forall d\in\mathcal T_Q(x)\colon\,
	\quad
	\mathcal T^2_Q(x;d)=\mathcal T^{\flat,2}_Q(x;d)=\mathcal T_{\mathcal T_Q(x)}(d),
\end{equation}
see \cite[Proposition~13.12]{RockafellarWets1998} as well.
From \cref{lem:cones_to_disjunctive_structures}, we know that the union of finitely many polyhedral
sets is derivable. 
In the subsequent lemma, we extend this result to parabolic derivability.
\begin{lemma}\label{lem:disjunctive_sets_parabolically_derivable}
	Let $S_1,\ldots,S_r\subset\R^n$ be polyhedral sets and define $S:=\bigcup\nolimits_{i=1}^rS_i$
	Then, for each $x\in S$ and $d\in\mathcal T_S(x)$, we have
	\[
		\mathcal T^2_S(x;d)=\mathcal T^{\flat,2}_S(x;d)=\mathcal T_{\mathcal T_S(x)}(d)
	\]
	and
	\[
		\mathcal T^2_S(x;d)+\bigcap\limits_{i\in I(x)}\mathcal T_{S_i}(x)^{\circ\perp}
			\subset \mathcal T^2_S(x;d)
	\]
	where we used $I(x):=\{i\in\{1,\ldots,r\}\,|\,x\in S_i\}$.
	In particular, $S$ is parabolically derivable.
\end{lemma}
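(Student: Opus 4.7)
The plan is to reduce everything to the polyhedral case by exploiting the union structure of $S$, using the identity from \cref{lem:cones_to_disjunctive_structures} for first-order tangent cones together with the already available parabolic derivability \eqref{eq:polyhedral_set_parabolically_derivable} for polyhedral sets. Set $I(x,d):=\{i\in I(x)\,|\,d\in\mathcal T_{S_i}(x)\}$; by \cref{lem:cones_to_disjunctive_structures}, $d\in\mathcal T_S(x)$ guarantees $I(x,d)\neq\varnothing$. All the equalities and inclusions we want to show will be reduced to statements about the (single) polyhedral pieces $S_i$ for $i\in I(x,d)$.

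For the first chain of equalities, I would prove the double inclusion
\[
\bigcup_{i\in I(x,d)}\mathcal T^{\flat,2}_{S_i}(x;d)\subset \mathcal T^{\flat,2}_S(x;d)\subset \mathcal T^{2}_S(x;d)\subset \bigcup_{i\in I(x,d)}\mathcal T^{2}_{S_i}(x;d).
\]
The first inclusion is immediate from $S_i\subset S$. The last is the only genuinely non-trivial step: given $h\in\mathcal T^2_S(x;d)$ with witnessing sequences $t_k\downarrow 0$, $h_k\to h$, and $y_k:=x+t_kd+\tfrac12t_k^2h_k\in S$, I pick $i_k\in I(x)$ with $y_k\in S_{i_k}$ (using $y_k\to x$ and closedness of each $S_i$); since $\{1,\dots,r\}$ is finite, a subsequence has constant $i_k=i^\star$, yielding $h\in\mathcal T^2_{S_{i^\star}}(x;d)$ and forcing $i^\star\in I(x,d)$. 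Invoking \eqref{eq:polyhedral_set_parabolically_derivable} for each polyhedral $S_i$ collapses the whole chain to
\[
\mathcal T^2_S(x;d)=\mathcal T^{\flat,2}_S(x;d)=\bigcup_{i\in I(x,d)}\mathcal T_{\mathcal T_{S_i}(x)}(d).
\]
The identification with $\mathcal T_{\mathcal T_S(x)}(d)$ then follows by applying the tangent cone formula of \cref{lem:cones_to_disjunctive_structures} to the union $\mathcal T_S(x)=\bigcup_{i\in I(x)}\mathcal T_{S_i}(x)$ at the point $d$, whose active index set is exactly $I(x,d)$.

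For the second inclusion involving the lineality spaces, I would use that, as recalled in the preliminaries, $\mathcal T_{S_i}(x)^{\circ\perp}$ is the lineality space of the closed convex cone $\mathcal T_{S_i}(x)$, so that every $v\in\bigcap_{i\in I(x)}\mathcal T_{S_i}(x)^{\circ\perp}$ lies in each $\mathcal T_{S_i}(x)$ for $i\in I(x)$. Given $h\in\mathcal T^2_S(x;d)$, the first part furnishes $i\in I(x,d)$ with $h\in\mathcal T_{\mathcal T_{S_i}(x)}(d)$; pick $t_k\downarrow 0$ and $h_k\to h$ with $d+t_kh_k\in\mathcal T_{S_i}(x)$. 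Since $\mathcal T_{S_i}(x)$ is a convex cone and $v\in\mathcal T_{S_i}(x)$, we have $d+t_k(h_k+v)=(d+t_kh_k)+t_kv\in\mathcal T_{S_i}(x)$, so $h+v\in\mathcal T_{\mathcal T_{S_i}(x)}(d)\subset\mathcal T^2_S(x;d)$.

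The final assertion that $S$ is parabolically derivable is just a restatement of the equality $\mathcal T^2_S(x;d)=\mathcal T^{\flat,2}_S(x;d)$ at every admissible pair $(x,d)$. The main obstacle I anticipate is the inclusion $\mathcal T^2_S(x;d)\subset\bigcup_{i\in I(x,d)}\mathcal T^2_{S_i}(x;d)$; the finiteness of the union and closedness of each $S_i$ are what make the pigeonhole/subsequence argument work, and a proof attempt that tried to avoid this reduction (e.g.\ working directly with $\mathcal T_S(x)$ without invoking the polyhedral identity \eqref{eq:polyhedral_set_parabolically_derivable}) would lose the link between second-order and first-order tangent objects that is indispensable here.
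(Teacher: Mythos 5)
Your proposal is correct and follows essentially the same route as the paper: decompose $S$ into its polyhedral pieces, reduce the second-order tangent sets of $S$ to those of the active $S_i$, invoke \eqref{eq:polyhedral_set_parabolically_derivable}, and recombine via \cref{lem:cones_to_disjunctive_structures}. The only difference is that you supply elementary arguments (the pigeonhole/subsequence step for the union formula, and the convex-cone argument for the shift-invariance $\mathcal T^2_{S_i}(x;d)+\mathcal T_{S_i}(x)\subset\mathcal T^2_{S_i}(x;d)$) where the paper simply cites \cite[Proposition~3.37]{BonnansShapiro2000} and \cite[Proposition~13.12]{RockafellarWets1998}, which makes your version more self-contained but not substantively different.
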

\begin{proof}
	Fix $x\in S$ and $d\in\mathcal T_S(x)$. 
	We exploit the calculus rules from \cite[Proposition~3.37]{BonnansShapiro2000} and
	the fact that polyhedral sets are parabolically derivable
	in order to obtain
	\[
		\mathcal T_S^2(x;d)
		=
		\bigcup\limits_{i\in I(x)}\mathcal T_{S_i}^2(x;d)
		=
		\bigcup\limits_{i\in I(x)}\mathcal T^{\flat,2}_{S_i}(x;d)
		\subset
		\mathcal T^{\flat,2}_S(x;d)
		\subset
		\mathcal T^2_S(x;d).
	\]
	This already shows the parabolic derivability of $S$ at $x$ in direction $d$.
	
	Next, we use formula \eqref{eq:polyhedral_set_parabolically_derivable} 
	as well as \cref{lem:cones_to_disjunctive_structures} in order to see
	\[
		\mathcal T_S^2(x;d)
		=
		\bigcup\limits_{i\in I(x)}\mathcal T_{S_i}^2(x;d)
		=
		\bigcup\limits_{i\in I(x)}\mathcal T_{\mathcal T_{S_i}(x)}(d)
		=
		\mathcal T_{\bigcup\nolimits_{i\in I(x)}\mathcal T_{S_i}(x)}(d)
		=
		\mathcal T_{\mathcal T_S(x)}(d).
	\]
	
	In order to prove correctness of the last formula, we first invoke 
	\cite[Proposition~13.12]{RockafellarWets1998} in order to see that
	\[
		\mathcal T^2_{S_i}(x;d)+\mathcal T_{S_i}(x)
			\subset \mathcal T^2_{S_i}(x;d)
	\]
	holds true for all $i\in I(x)$ since $S_i$ is a polyhedron.
	This leads to
	\begin{align*}
		\mathcal T^2_S(x;d)&+\bigcap\limits_{i\in I(x)}\mathcal T_{S_i}(x)^{\circ\perp}
		= \left(\bigcup\limits_{j\in I(x)}\mathcal T^2_{S_j}(x;d)\right)
					+\bigcap\limits_{i\in I(x)}\mathcal T_{S_i}(x)^{\circ\perp}\\
		&=	\bigcup\limits_{j\in I(x)}\left(\mathcal T^2_{S_j}(x;d)
					+\bigcap\limits_{i\in I(x)}\mathcal T_{S_i}(x)^{\circ\perp}\right)
		\subset\bigcup\limits_{j\in I(x)}\left(\mathcal T^2_{S_j}(x;d)
					+\mathcal T_{S_j}(x)^{\circ\perp}\right)\\
		&\subset\bigcup\limits_{j\in I(x)}\left(\mathcal T^2_{S_j}(x;d)
					+\mathcal T_{S_j}(x)\right)
		\subset\bigcup\limits_{j\in I(x)}\mathcal T^2_{S_j}(x;d)
		 =\mathcal T^2_S(x;d)
	\end{align*}
	and completes the proof.
\end{proof}

\subsubsection{Inverse images}

Next, we present some preliminary results on the variational geometry associated with preimages of
closed sets under smooth transformations.
Therefore, let $P\colon\R^n\to\R^m$ be a twice continuously differentiable mapping and let $\Omega\subset\R^m$
be a closed set such that $Y:=\{x\in\R^n\,|\,P(x)\in\Omega\}$ is nonempty.
For our subsequent considerations, we fix $\bar x\in Y$.

Let us first discuss variational approximations of tangents to $Y$ at $\bar x$.
We call
\begin{equation}\label{eq:linearization_cone}
	\mathcal L_Y(\bar x):=\{d\in\R^n\,|\,\nabla P(\bar x)d\in\mathcal T_\Omega(P(\bar x))\}
\end{equation}
the linearization cone to $Y$ at $\bar x$. One always has the inclusion 
$\mathcal T_Y(\bar x)\subset\mathcal L_Y(\bar x)$, see \cite[Theorem~6.31]{RockafellarWets1998}, while
equality holds if the so-called feasibility mapping $\R^n\ni x\mapsto \{P(x)\}-\Omega\subset\R^m$
is metrically subregular at $(\bar x,0^m)$, see \cite[Proposition~1]{HenrionOutrata2005}.
The latter condition has been named \emph{metric subregularity constraint qualification} (MSCQ)
in \cite[Definition~3.2]{GfrererMordukhovich2015}. 
As it is mentioned in \cite{HenrionOutrata2005}, the validity of the so-called
\emph{no nonzero abnormal multiplier constraint qualification} (NNAMCQ) given by
\[
	0^n=\nabla P(\bar x)^\top\lambda,\,\lambda\in\mathcal N_\Omega(P(\bar x))
	\,\Longrightarrow\,\lambda=0^m
\]
is sufficient for MSCQ to hold. 
In the literature, NNAMCQ is sometimes called \emph{generalized Mangasarian-Fromovitz constraint qualification} (GMFCQ)
since it reduces to the classical MFCQ condition in the context of standard nonlinear programming.
Below, we show that MSCQ can be used in order to obtain a precise
characterization of the outer second-order tangent set associated with $Y$.
For the proof, we follow ideas from \cite[Proposition~13.13]{RockafellarWets1998}.
\begin{lemma}\label{lem:outer_second_order_tangents_preimage}
	Let $\bar x\in Y$ be arbitrarily chosen.
	Then, for each $d\in\mathcal T_Y(\bar x)$, we have
	\[
		\mathcal T^2_Y(\bar x;d)
		\subset
		\left\{
			h\in\R^n\,\middle|\,
			\nabla P(\bar x)h+\nabla^2P(\bar x)[d,d]\in\mathcal T^2_\Omega(P(\bar x);\nabla P(\bar x)d)
		\right\}.
	\]
	If MSCQ is valid at $\bar x$, then equality holds.
\end{lemma}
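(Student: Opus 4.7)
My plan is to establish the two inclusions separately, with the first one being a direct Taylor expansion and the second one using MSCQ to upgrade a distance estimate.

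For the easy inclusion ``$\subset$'', I would take $h\in\mathcal T^2_Y(\bar x;d)$ together with witnessing sequences $t_k\downarrow 0$ and $h_k\to h$ satisfying $x_k:=\bar x+t_kd+\tfrac12t_k^2h_k\in Y$, so that $P(x_k)\in\Omega$. A second-order Taylor expansion of $P$ at $\bar x$ gives
\[
P(x_k)=P(\bar x)+t_k\nabla P(\bar x)d+\tfrac12t_k^2\bigl(\nabla P(\bar x)h_k+\nabla^2P(\bar x)[d,d]\bigr)+o(t_k^2),
\]
since the cross and higher-order terms arising from $\tfrac12t_k^2h_k$ are $O(t_k^3)$. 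Setting $w_k:=\nabla P(\bar x)h_k+\nabla^2P(\bar x)[d,d]+r_k$ with $r_k=o(1)$ absorbing the remainder divided by $\tfrac12t_k^2$, the membership $P(\bar x)+t_k\nabla P(\bar x)d+\tfrac12t_k^2w_k\in\Omega$ together with $w_k\to\nabla P(\bar x)h+\nabla^2P(\bar x)[d,d]$ shows exactly that the latter vector lies in $\mathcal T^2_\Omega(P(\bar x);\nabla P(\bar x)d)$.

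For the reverse inclusion under MSCQ, I would start from $h\in\R^n$ with $\nabla P(\bar x)h+\nabla^2P(\bar x)[d,d]\in\mathcal T^2_\Omega(P(\bar x);\nabla P(\bar x)d)$. Pick sequences $t_k\downarrow 0$ and $w_k\to\nabla P(\bar x)h+\nabla^2P(\bar x)[d,d]$ with $P(\bar x)+t_k\nabla P(\bar x)d+\tfrac12t_k^2w_k\in\Omega$. Define candidate points $x_k:=\bar x+t_kd+\tfrac12t_k^2h\to\bar x$ and expand
\[
P(x_k)=P(\bar x)+t_k\nabla P(\bar x)d+\tfrac12t_k^2\bigl(\nabla P(\bar x)h+\nabla^2P(\bar x)[d,d]\bigr)+o(t_k^2).
\]
Comparing with the element of $\Omega$ above, one obtains $\dist(P(x_k),\Omega)\leq o(t_k^2)+\tfrac12t_k^2\|w_k-(\nabla P(\bar x)h+\nabla^2P(\bar x)[d,d])\|=o(t_k^2)$.

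Now I invoke MSCQ, which means that the feasibility mapping $x\mapsto\{P(x)\}-\Omega$ is metrically subregular at $(\bar x,0^m)$, i.e.\ there exist $\kappa,\varepsilon>0$ with $\dist(x,Y)\leq\kappa\dist(P(x),\Omega)$ for all $x\in\mathbb U^\varepsilon(\bar x)$. Since $x_k\to\bar x$, this applies for large $k$ and yields $\dist(x_k,Y)\leq\kappa\,o(t_k^2)=o(t_k^2)$. Choosing $y_k\in Y$ with $\|y_k-x_k\|=o(t_k^2)$ and writing $y_k=\bar x+t_kd+\tfrac12t_k^2h_k$ with $h_k:=h+\tfrac{2}{t_k^2}(y_k-x_k)\to h$ concludes $h\in\mathcal T^2_Y(\bar x;d)$. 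The main obstacle is making the $o(t_k^2)$-bookkeeping rigorous at every step, in particular ensuring that the perturbation from $w_k$ to its limit and the Taylor remainder combine cleanly on the second-order scale, so that MSCQ—which is only a first-order (linear) estimate—can nonetheless transport a second-order-small infeasibility of $P(x_k)$ into a second-order-small perturbation of $x_k$ inside $Y$.
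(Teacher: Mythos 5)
Your proposal is correct and follows essentially the same route as the paper: the forward inclusion via a second-order Taylor expansion of $P$ along the parabolic arc, and the converse by measuring $\dist(P(\bar x+t_kd+\tfrac12t_k^2h),\Omega)=\oo(t_k^2)$, applying the metric subregularity estimate (which, being a plain linear bound, transports $\oo(t_k^2)$ to $\oo(t_k^2)$ without any loss), and projecting onto $Y$ to build the witnessing sequence $h_k\to h$. The bookkeeping you flag as the main obstacle is unproblematic for exactly the reason you state, so no gap remains.
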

\begin{proof}
	We start proving the inclusion $\subset$ which is supposed to be valid in general.
	Fix $h\in \mathcal T^2_Y(\bar x;d)$ arbitrarily.
	Then, we find sequences $\{h_k\}_{k\in\N}\subset\R^n$ and $\{t_k\}_{k\in\N}\subset\R_+$
	such that $h_k\to h$, $t_k\downarrow 0$, as well as $P(\bar x+t_kd+\tfrac12t_k^2h_k)\in \Omega$
	for all $k\in\N$ hold true. For each $k\in\N$, we now define
	\[
		r_k:=\frac{P(\bar x+t_kd+\tfrac12t_k^2h_k)-P(\bar x)-t_k\nabla P(\bar x)d}{\tfrac12t_k^2}.
	\]
	Then, we have $P(\bar x)+t_k\nabla P(\bar x)d+\tfrac12t_k^2r_k=P(\bar x+t_kd+\tfrac12t_k^2h_k)\in \Omega$
	for all $k\in\N$, i.e.\ supposing that $\{r_k\}_{k\in\N}$ converges, its limit belongs
	to $\mathcal T^2_\Omega(P(\bar x);\nabla P(\bar x)d)$. On the other hand, we have
	\[
		r_k=\frac{P(\bar x+t_k(d+\tfrac12t_kh_k))-P(\bar x)-t_k\nabla P(\bar x)(d+\tfrac12t_kh_k)}{\tfrac12t_k^2}
			+\nabla P(\bar x)h_k
	\]
	and this sum converges to $\nabla ^2P(\bar x)[d,d]+\nabla P(\bar x)h$, see e.g.\ \cite[Example~13.8]{RockafellarWets1998}.
	Thus, we have $\nabla ^2P(\bar x)[d,d]+\nabla P(\bar x)h\in\mathcal T^2_\Omega(P(\bar x);\nabla P(\bar x)d)$.\\
	Now, we assume that MSCQ holds at $\bar x$ and show validity of the converse inclusion $\supset$.
	Fix $h\in\R^n$ satisfying 
	$\nabla P(\bar x)h+\nabla^2P(\bar x)[d,d]\in \mathcal T^2_\Omega(P(\bar x);\nabla P(\bar x)d)$.
	Then, 	we find sequences $\{r_k\}_{k\in\N}\subset\R^m$ and $\{t_k\}_{k\in\N}\subset\R_+$ such that 
	$r_k\to\nabla P(\bar x)h+\nabla^2P(\bar x)[d,d]$, $t_k\downarrow 0$, 
	and $P(\bar x)+t_k\nabla P(\bar x)d+\tfrac12t_k^2r_k\in \Omega$ for all $k\in\N$.
	Noting that $Y$ is closed, let us fix $x_k\in\argmin\{\norm{\bar x+t_kd+\tfrac12t_k^2h-x}{2}\,|\,x\in Y\}$ for all $k\in\N$.
	Since the feasibility map $\R^n\ni x\mapsto \{P(x)\}-\Omega\subset\R^m$ is metrically subregular at $(\bar x,0^m)$, 
	we find constants $\kappa>0$ and $\varepsilon>0$ such that
	\[
		\forall x\in\mathbb U^\varepsilon(\bar x)\colon\quad
			\dist(x,Y)\leq\kappa\dist(P(x),\Omega).
	\]
	We obtain
	\begin{align*}
			&\norm{\frac{x_k-\bar x-t_kd}{\tfrac12t_k^2}-h}{2}
			 =\frac2{t_k^2}\dist(\bar x+t_kd+\tfrac12t_k^2h,Y)
			 \leq\frac{2\kappa}{t_k^2}\dist(P(\bar x+t_kd+\tfrac12t_k^2h),\Omega)\\
			&\qquad\leq\kappa\norm{\frac{P(\bar x+t_kd+\tfrac12t_k^2h)-P(\bar x)-t_k\nabla P(\bar x)d}{\tfrac12t_k^2}-r_k}{2}\\
			&\qquad =\kappa\norm{\frac{P(\bar x+t_k(d+\tfrac12t_kh))-P(\bar x)-t_k\nabla P(\bar x)(d+\tfrac12t_kh)}{\tfrac12t_k^2}
					+\nabla P(\bar x)h-r_k}{2}\\
	\end{align*}
	for sufficiently large $k\in\N$ and the last term converges to $0$ as $k\to\infty$, see \cite[Example~13.8]{RockafellarWets1998}.
	Thus, we have shown $h\in\mathcal T_Y^2(\bar x;d)$.
\end{proof}

Let us now focus on the variational description of Fr\'{e}chet normals to $Y$ at $\bar x$.
Similarly as above, we generally have
\begin{equation}\label{eq:Frechet_normal_cone_inverse_image}
	\widehat{\mathcal N}_Y(\bar x)\supset \nabla P(\bar x)^\top\widehat{\mathcal N}_\Omega(P(\bar x))
\end{equation}
while the converse inclusion can only be guaranteed postulating additional assumptions. 
The following result is associated with this issue and taken from
\cite[Theorem~4]{GfrererOutrata2016}.
\begin{proposition}\label{prop:subspaceCQ}
	Let $\bar x\in Y$ be a point where MSCQ holds.
	Suppose that there exists a subspace $L\subset\R^m$ satisfying 
	$\mathcal T_\Omega(P(\bar x))+L\subset\mathcal T_\Omega(P(\bar x))$ and
	\[
		\nabla P(\bar x)\R^n+L=\R^m.
	\]
	Then, equality holds in \eqref{eq:Frechet_normal_cone_inverse_image}.	
\end{proposition}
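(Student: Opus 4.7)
The plan is to exploit MSCQ to identify $\widehat{\mathcal N}_Y(\bar x)$ with the polar of the linearization cone, and then prove the nontrivial inclusion by a Hahn--Banach--free construction: define a linear functional on $\mathbb R^m$ via a preimage-based recipe, show it is well-defined thanks to the subspace $L$, and check the resulting vector is the sought multiplier. Concretely, denote $A:=\nabla P(\bar x)$, $C:=\mathcal T_\Omega(P(\bar x))$, and $K:=\widehat{\mathcal N}_\Omega(P(\bar x))=C^\circ$. Since MSCQ holds, the feasibility characterization of MSCQ cited just above the proposition gives $\mathcal T_Y(\bar x)=\mathcal L_Y(\bar x)=A^{-1}(C)$, hence
\[
    \widehat{\mathcal N}_Y(\bar x)\,=\,\mathcal T_Y(\bar x)^\circ\,=\,(A^{-1}(C))^\circ.
\]
The inclusion $A^\top K\subset (A^{-1}(C))^\circ$ is trivial (if $\eta=A^\top\lambda$ with $\lambda\in C^\circ$ and $Ad\in C$, then $\eta\cdot d=\lambda\cdot(Ad)\le 0$), so the whole task is to establish the opposite inclusion.

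To this end, first record two consequences of the hypothesis: taking $c=0^m\in C$ in $L+C\subset C$ yields $L\subset C$, and since $L$ is a subspace, $L+C=C$. In particular, for every $c\in C$ the CQ $A\mathbb R^n+L=\mathbb R^m$ provides $d\in\mathbb R^n$ and $l\in L$ with $c=Ad+l$, and $Ad=c-l\in C+L\subset C$, so $d\in A^{-1}(C)$. Fix now an arbitrary $\eta\in (A^{-1}(C))^\circ$ and define a candidate dual variable via the prescription
\[
    \tilde\varphi(y):=\eta\cdot d\qquad\text{whenever }y=Ad+l,\,d\in\mathbb R^n,\,l\in L.
\]
For well-definedness, if $y=Ad+l=Ad'+l'$, then $A(d-d')=l'-l\in L\subset C$ and also $A(d'-d)\in L\subset C$ since $L$ is a subspace, so both $d-d'$ and $d'-d$ lie in $A^{-1}(C)$; since $\eta\in (A^{-1}(C))^\circ$, this forces $\eta\cdot(d-d')=0$. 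Linearity of $\tilde\varphi$ in $y$ is then immediate from the bilinearity on the right-hand side, so there exists $\lambda\in\mathbb R^m$ with $\tilde\varphi(y)=\lambda\cdot y$ for every $y\in\mathbb R^m$.

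It remains to verify $\lambda\in K$ and $A^\top\lambda=\eta$. For the former, take any $c\in C$ and write $c=Ad+l$ with $d\in A^{-1}(C)$ as above; then $\lambda\cdot c=\tilde\varphi(c)=\eta\cdot d\le 0$ because $\eta\in(A^{-1}(C))^\circ$ and $d\in A^{-1}(C)$. For the latter, for any $d\in\mathbb R^n$ apply $\tilde\varphi$ to $Ad=A\,d+0^m$, yielding $(A^\top\lambda)\cdot d=\lambda\cdot Ad=\tilde\varphi(Ad)=\eta\cdot d$, and thus $\eta=A^\top\lambda\in A^\top K$. The main obstacle is keeping track of why the recipe for $\tilde\varphi$ makes sense; both the existence of a decomposition and the ambiguity cancellation rest crucially on the interplay between the range condition and the invariance $L+C\subset C$, so the proof will emphasize exactly these two ingredients.
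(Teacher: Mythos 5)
Your proof is correct. Note that the paper does not prove this proposition at all --- it imports it verbatim from \cite[Theorem~4]{GfrererOutrata2016} --- so what you have produced is a self-contained elementary argument where the manuscript only offers a citation. The logical skeleton is sound: MSCQ yields $\mathcal T_Y(\bar x)=\mathcal L_Y(\bar x)=A^{-1}(C)$ and hence $\widehat{\mathcal N}_Y(\bar x)=(A^{-1}(C))^\circ$ by the paper's definition of the Fr\'echet normal cone as the polar of the tangent cone; the inclusion $A^\top C^\circ\subset(A^{-1}(C))^\circ$ is indeed trivial; and the converse is exactly where the two hypotheses on $L$ enter. Your two key observations are both right and both necessary: $0^m\in C$ gives $L\subset C$ and the surjectivity $A\R^n+L=\R^m$ gives existence of the decomposition $y=Ad+l$, while the invariance $C+L\subset C$ together with $L=-L$ gives (i) that the representative $d$ of any $c\in C$ lies in $A^{-1}(C)$ and (ii) that the ambiguity $d-d'$ satisfies $\pm(d-d')\in A^{-1}(C)$, so $\eta\cdot(d-d')=0$ and $\tilde\varphi$ is well defined. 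The verifications $\lambda\in C^\circ$ and $A^\top\lambda=\eta$ then follow as you write them. If you wanted to streamline the exposition, you could phrase the same construction as the purely linear-algebraic identity $(A^{-1}(C))^\circ=A^\top C^\circ$ valid whenever $A\R^n+L=\R^m$ for a subspace $L$ with $C+L=C$, which is essentially the mechanism behind the nondegeneracy calculus in \cite[Section~6.4.1]{BonnansShapiro2000}; but as written the argument is complete and needs no repair.
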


In the context of certain instances of disjunctive programming, there exist weaker conditions than those
ones postulated in \cref{prop:subspaceCQ} which ensure equality in \eqref{eq:Frechet_normal_cone_inverse_image},
see \cite{BenkoGfrerer2017}.

\section{An MPDC-tailored version of the linear independence constraint qualification}\label{sec:MPDC_LICQ}

We start this section by defining the constraint qualification of our interest. 
Recall that $X\subset\R^n$ denotes the feasible set of \eqref{eq:MPDC}.
\begin{definition}\label{def:MPDC-LICQ}
	Let $\bar x\in X$ be an arbitrary feasible point of \eqref{eq:MPDC}. 
	Then, the \emph{linear independence constraint qualification} (MPDC-LICQ) is said to hold
	at $\bar x$ if the following condition is valid:
	\[
		0^n=\nabla F(\bar x)^\top\lambda,\,
		\lambda\in\sum\limits_{i\in I(\bar x)}\spa\widehat{\mathcal N}_{D_i}(F(\bar x))\,
		\Longrightarrow\,
		\lambda=0^m.
	\]
	Here, we used $I(\bar x):=\{i\in\{1,\ldots,r\}\,|\,F(\bar x)\in D_i\}$.
\end{definition}

We first note that MPDC-LICQ holds at $\bar x\in X$ whenever the matrix $\nabla F(\bar x)$
possesses full row rank $m$, i.e.\ if the gradients $\nabla F_i(\bar x),\ldots,\nabla F_m(\bar x)$
of the component mappings $F_1,\ldots,F_m\colon\R^n\to\R$ associated with $F$ are linearly independent.
In the example below, it will be demonstrated that MPDC-LICQ reduces to the well-known LICQ
whenever standard nonlinear programs are under consideration.
\begin{example}\label{ex:standard_nonlinear_programming}
	For continuously differentiable functions $g_1,\ldots,g_p,h_1,\ldots,h_q\colon\R^n\to\R$, 
	we consider the standard nonlinear program
	\begin{equation}
		\label{eq:NLP}\tag{NLP}
			\begin{aligned}
				f(x)&\,\to\,\min&&&\\
				g_j(x)&\,\leq\,0&\quad&j=1,\ldots,p&\\
				h_j(x)&\,=\,0&\quad&j=1,\ldots,q.&
			\end{aligned}
	\end{equation}
	In order to transfer it to a program of type \eqref{eq:MPDC}, we choose $r:=1$, 
	set $D:=\R^p_-\times\{0^q\}$,
	and define $F\colon\R^n\to\R^{p+q}$ by means of
	\[
		\forall x\in\R^n\colon\quad
			F(x):=\begin{bmatrix}
				g(x)^\top&h(x)^\top
			\end{bmatrix}^\top .
	\]
	Here, the mappings $g\colon\R^n\to\R^p$ and $h\colon\R^n\to\R^q$ possess the
	component mappings $g_1,\ldots,g_p$ and $h_1,\ldots,h_q$, respectively.
	
	Fix a feasible point $\bar x\in X$ of \eqref{eq:NLP} and define
	$I^g(\bar x):=\{j\in\{1,\ldots,p\}\,|\,g_j(\bar x)=0\}$.
	Using the calculus rules for the tangent and Fr\'{e}chet normal cone to 
	Cartesian products of (convex) sets, see \cite[Proposition~6.41]{RockafellarWets1998},
	we have
	\[
		\mathcal T_{D}(F(\bar x))
			=\mathcal T_{\R^p_-}(g(\bar x))\times\mathcal T_{\{0^q\}}(h(\bar x)),\qquad
		\widehat{\mathcal N}_{D}(F(\bar x))
			=\widehat{\mathcal N}_{\R^p_-}(g(\bar x))\times\widehat{\mathcal N}_{\{0^q\}}(h(\bar x)).
	\]
	Straightforward calculations lead to the formulas
	\[
	\begin{aligned}
	\mathcal T_{\R^p_-}(g(\bar x))
		&=\{d\in\R^p\,|\,\forall j\in I^g(\bar x)\colon\,d_j\leq 0\},&
	\quad\mathcal T_{\{0^q\}}(h(\bar x))
		&=\{0^q\},\\
	\widehat{\mathcal N}_{\R^p_-}(g(\bar x))
		&=\{\lambda\in\R^p_+\,|\,\forall j\notin I^g(\bar x)\colon\,\lambda_j=0\},&
	\qquad
	\widehat{\mathcal N}_{\{0^q\}}(h(\bar x))
		&=\R^q,
	\end{aligned}
	\]
	i.e.\ we have
	\[
		\spa\widehat{\mathcal N}_D(F(\bar x))
			=\{\lambda\in\R^p\,|\,\forall j\notin I^g(\bar x)\colon\,\lambda_j=0\}\times\R^q.
	\]
	Thus, MPDC-LICQ from \cref{def:MPDC-LICQ} takes the following form for \eqref{eq:NLP}:
	\[
		\left.
			\begin{aligned}
				&0^n=\nabla g(\bar x)^\top\lambda+\nabla h(\bar x)^\top\rho,\\
				&\forall j\notin I^g(\bar x)\colon\,\lambda_j=0
			\end{aligned}
		\right\}
		\,\Longrightarrow\,
		\lambda=0^p,\,\rho=0^q.
	\]
	This is equivalent to the linear independence of the vectors from
	\[
		\{\nabla g_j(\bar x)\,|\,j\in I^g(\bar x)\}\cup\{\nabla h_j(\bar x)\,|\,j\in\{1,\ldots,q\}\}
	\]
	which is precisely the definition of the standard linear independence constraint
	qualification from nonlinear programming.
\end{example}
In \cref{sec:application}, we will show that in the particular instances of MPCCs,
MPVCs, CCMPs, and MPSCs, MPDC-LICQ coincides with the well-known respective problem-tailored
version of LICQ.

We provide an equivalent primal characterization of MPDC-LICQ in the subsequent lemma.
\begin{lemma}\label{lem:primal_MPDC_LICQ}
	Fix $\bar x\in X$ arbitrarily. Then, MPDC-LICQ is valid at $\bar x$ if and only if the subsequent
	condition is satisfies:
	\[
		\nabla F(\bar x)\R^n+\bigcap\limits_{i\in I(\bar x)}\mathcal T_{D_i}(F(\bar x))^{\circ\perp}=\R^m.
	\]
\end{lemma}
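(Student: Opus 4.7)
My plan is to dualize MPDC-LICQ and match it with the stated primal condition via standard subspace duality. Since each $D_i$ is polyhedral and in particular convex, one has $\widehat{\mathcal N}_{D_i}(F(\bar x))=\mathcal T_{D_i}(F(\bar x))^\circ$. I would set $V_i:=\spa\widehat{\mathcal N}_{D_i}(F(\bar x))$ and $V:=\sum_{i\in I(\bar x)}V_i$, both of which are subspaces of $\R^m$. MPDC-LICQ then rephrases as the injectivity of the linear map $\lambda\mapsto\nabla F(\bar x)^\top\lambda$ on $V$, i.e.\ as
\[
    \ker(\nabla F(\bar x)^\top)\cap V=\{0^m\}.
\]

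Next I would compute $V^\perp$. Using the elementary identity $(\spa A)^\perp=A^\perp$ for any set $A\subset\R^m$, I obtain $V_i^\perp=\widehat{\mathcal N}_{D_i}(F(\bar x))^\perp=\mathcal T_{D_i}(F(\bar x))^{\circ\perp}$. Then applying the annihilator rule for sums of subspaces recalled in the excerpt, $(L_1+L_2)^\perp=L_1^\perp\cap L_2^\perp$, yields
\[
    V^\perp=\bigcap_{i\in I(\bar x)}V_i^\perp=\bigcap_{i\in I(\bar x)}\mathcal T_{D_i}(F(\bar x))^{\circ\perp}.
\]
Thus the primal condition to be proved is exactly $\nabla F(\bar x)\R^n+V^\perp=\R^m$.

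Finally I would close the argument with standard linear-algebraic duality. Taking orthogonal complements, $\nabla F(\bar x)\R^n+V^\perp=\R^m$ is equivalent to $(\nabla F(\bar x)\R^n)^\perp\cap V^{\perp\perp}=\{0^m\}$, and since $V$ is a (closed) subspace of $\R^m$, $V^{\perp\perp}=V$. Combining this with the standard identity $(\nabla F(\bar x)\R^n)^\perp=\ker(\nabla F(\bar x)^\top)$, I recover exactly the dual formulation of MPDC-LICQ obtained in the first paragraph. No step here is particularly demanding; the main substantive observation is the identification $V_i^\perp=\mathcal T_{D_i}(F(\bar x))^{\circ\perp}$, which forms the bridge between MPDC-LICQ, phrased through the spans of the Fr\'{e}chet normal cones, and the primal condition, phrased through the annihilators of the polars of the tangent cones.
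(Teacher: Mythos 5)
Your proposal is correct and follows essentially the same route as the paper: both reduce the claim to the linear-algebraic equivalence $A\R^n+L=\R^m\Leftrightarrow\ker(A^\top)\cap L^\perp=\{0^m\}$ for a subspace $L\subset\R^m$, combined with the annihilator identity linking $\bigcap_{i\in I(\bar x)}\mathcal T_{D_i}(F(\bar x))^{\circ\perp}$ and $\sum_{i\in I(\bar x)}\spa\widehat{\mathcal N}_{D_i}(F(\bar x))$ via the rules $(L_1+L_2)^\perp=L_1^\perp\cap L_2^\perp$ and $K^{\perp\perp}=\spa K$. The only cosmetic difference is that you pass from the sum of spans to the intersection of lineality spaces, whereas the paper computes the annihilator in the opposite direction; the content is identical.
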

\begin{proof}
	First, we note that for any matrix $A\in\R^{m\times n}$ and any subspace $L\subset\R^m$, the equivalence
	\[
		A\R^n+L=\R^m\,\Longleftrightarrow\,\{\lambda\in L^\perp\,|\,A^\top \lambda=0^n\}=\{0^m\}
	\]
	follows from the polarization rules provided in \cref{sec:variational_analysis}. Thus, the statement
	of the lemma follows from setting $A:=\nabla F(\bar x)$ as well as 
	$L:=\bigcap_{i\in I(\bar x)}\mathcal T_{D_i}(F(\bar x))^{\circ\perp}$ and
	observing that
	\[
		\left(\bigcap_{i\in I(\bar x)}\mathcal T_{D_i}(F(\bar x))^{\circ\perp}\right)^\perp
		=\sum\limits_{i\in I(\bar x)}\widehat{\mathcal N}_{D_i}(F(\bar x))^{\perp\perp}
		=\sum\limits_{i\in I(\bar x)}\spa\widehat{\mathcal N}_{D_i}(F(\bar x))
	\]
	holds true.
\end{proof}
\begin{remark}\label{rem:nondegeneracy}
	Fix $\bar x\in X$ arbitrarily. Due to
	\begin{equation}\label{eq:subspace_candidates}
		\mathcal T_D(F(\bar x))^{\circ\perp}
		=
		\left(\bigcap\limits_{i\in I(\bar x)}\mathcal T_{D_i}(F(\bar x))^\circ\right)^\perp
		\supset
		\sum\limits_{i\in I(\bar x)}\mathcal T_{D_i}(F(\bar x))^{\circ\perp}
		\supset
		\bigcap\limits_{i\in I(\bar x)}\mathcal T_{D_i}(F(\bar x))^{\circ\perp}
	\end{equation}
	and \cref{lem:primal_MPDC_LICQ}, the validity of MPDC-LICQ at $\bar x$ implies that
	\[
		\nabla F(\bar x)\R^n+\mathcal T_D(F(\bar x))^{\circ\perp}=\R^m
	\]
	holds. The latter condition is referred to as nondegeneracy in the setting where $D$ is convex, see
	\cite[Section~6.4.1]{BonnansShapiro2000}.
	Noting that $D$ is typically nonconvex in our setting, we would like to
	mention that a related conditions in the context of disjunctive programming can be found
	in \cite[Definition~3.6]{Gfrerer2014}.
\end{remark}
The following lemma will be important for our remaining considerations.
\begin{lemma}\label{lem:subspace_of_tangent_cone}
	For each feasible point $\bar x\in X$ of \eqref{eq:MPDC}, the following conditions hold:
	\begin{subequations}
		\begin{align}
			\label{eq:subspace_property1}
			&\mathcal T_D(F(\bar x))+\bigcap\limits_{i\in I(\bar x)}\mathcal T_{D_i}(F(\bar x))^{\circ\perp}
			\subset \mathcal T_D(F(\bar x)),\\
			\label{eq:subspace_property2}
			&\mathcal N_D(F(\bar x))
			\subset\sum\limits_{i\in I(\bar x)}\spa\widehat{\mathcal N}_{D_i}(F(\bar x)).
		\end{align}
	\end{subequations}
\end{lemma}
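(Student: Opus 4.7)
My plan is to reduce both inclusions to structural facts about the pieces $D_i$ already assembled earlier in the excerpt, namely the decomposition of the Bouligand tangent cone and the upper bound on the limiting normal cone from \cref{lem:cones_to_disjunctive_structures}, together with the identity $K^{\circ\perp}=K\cap(-K)$ for closed convex cones $K$ recalled in \cref{sec:polars}.

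For the inclusion \eqref{eq:subspace_property1}, I first invoke \cref{lem:cones_to_disjunctive_structures} to rewrite $\mathcal T_D(F(\bar x))=\bigcup_{i\in I(\bar x)}\mathcal T_{D_i}(F(\bar x))$. Since each $D_i$ is polyhedral, in particular convex, each $\mathcal T_{D_i}(F(\bar x))$ is a closed convex cone, so from the bipolar calculation in \cref{sec:polars} the set $\mathcal T_{D_i}(F(\bar x))^{\circ\perp}$ is precisely the lineality space $\mathcal T_{D_i}(F(\bar x))\cap(-\mathcal T_{D_i}(F(\bar x)))$, which is contained in $\mathcal T_{D_i}(F(\bar x))$. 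Because $\mathcal T_{D_i}(F(\bar x))$ is a convex cone, it is closed under addition, hence $\mathcal T_{D_i}(F(\bar x))+\mathcal T_{D_i}(F(\bar x))^{\circ\perp}\subset \mathcal T_{D_i}(F(\bar x))$. Now I use the trivial inclusion $\bigcap_{i\in I(\bar x)}\mathcal T_{D_i}(F(\bar x))^{\circ\perp}\subset \mathcal T_{D_j}(F(\bar x))^{\circ\perp}$ for every $j\in I(\bar x)$ and the distributivity of sums over unions,
\[
\Bigl(\bigcup_{j\in I(\bar x)}\mathcal T_{D_j}(F(\bar x))\Bigr)+\bigcap_{i\in I(\bar x)}\mathcal T_{D_i}(F(\bar x))^{\circ\perp}
=\bigcup_{j\in I(\bar x)}\Bigl(\mathcal T_{D_j}(F(\bar x))+\bigcap_{i\in I(\bar x)}\mathcal T_{D_i}(F(\bar x))^{\circ\perp}\Bigr),
\]
which by the inclusion just established is contained in $\bigcup_{j\in I(\bar x)}\mathcal T_{D_j}(F(\bar x))=\mathcal T_D(F(\bar x))$, yielding \eqref{eq:subspace_property1}.

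For the inclusion \eqref{eq:subspace_property2}, I apply the final assertion of \cref{lem:cones_to_disjunctive_structures}, which is available because each $D_i$ is polyhedral and thus convex, to obtain $\mathcal N_D(F(\bar x))\subset \bigcup_{i\in I(\bar x)}\widehat{\mathcal N}_{D_i}(F(\bar x))$. For every $i\in I(\bar x)$ the trivial inclusion $\widehat{\mathcal N}_{D_i}(F(\bar x))\subset \spa\widehat{\mathcal N}_{D_i}(F(\bar x))\subset \sum_{j\in I(\bar x)}\spa\widehat{\mathcal N}_{D_j}(F(\bar x))$ holds, and forming the union over $i\in I(\bar x)$ on the left gives precisely \eqref{eq:subspace_property2}.

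There is essentially no hard step here; the whole argument is a bookkeeping exercise built on the two earlier structural lemmas. The only point that requires a moment of care is the verification that $K^{\circ\perp}\subset K$ for each $K=\mathcal T_{D_i}(F(\bar x))$, which is exactly the bipolar-based identification of $K^{\circ\perp}$ with the lineality space of $K$ recorded in \cref{sec:polars}, and convexity of $K$ is what converts this into the additive inclusion $K+K^{\circ\perp}\subset K$ used above.
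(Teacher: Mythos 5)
Your proposal is correct and follows essentially the same route as the paper's proof: for \eqref{eq:subspace_property1} it distributes the sum over the union from \cref{lem:cones_to_disjunctive_structures}, shrinks the intersection to $\mathcal T_{D_j}(F(\bar x))^{\circ\perp}$, and uses that the lineality space $\mathcal T_{D_j}(F(\bar x))\cap(-\mathcal T_{D_j}(F(\bar x)))$ is absorbed by the convex cone $\mathcal T_{D_j}(F(\bar x))$; for \eqref{eq:subspace_property2} it chains the limiting-normal-cone estimate into the sum of spans exactly as the paper does. No gaps.
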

\begin{proof}
	Using \cref{lem:cones_to_disjunctive_structures} and the convexity of $D_1,\ldots,D_r$, we find
	\begin{align*}
		\mathcal T_D(F(\bar x))+\bigcap\limits_{i\in I(\bar x)}\mathcal T_{D_i}(F(\bar x))^{\circ\perp}
		&=\left(\bigcup\limits_{j\in I(\bar x)}\mathcal T_{D_j}(F(\bar x))\right)+\bigcap\limits_{i\in I(\bar x)}\mathcal T_{D_i}(F(\bar x))^{\circ\perp}\\
		&=\bigcup\limits_{j\in I(\bar x)}\left(\mathcal T_{D_j}(F(\bar x))
			+\bigcap\limits_{i\in I(\bar x)}\mathcal T_{D_i}(F(\bar x))^{\circ\perp}\right)\\
		&\subset\bigcup\limits_{j\in I(\bar x)}\left(\mathcal T_{D_j}(F(\bar x))+\mathcal T_{D_j}(F(\bar x))^{\circ\perp}\right)\\
		&=\bigcup\limits_{j\in I(\bar x)}\left(\mathcal T_{D_j}(F(\bar x))+\mathcal T_{D_j}(F(\bar x))\cap(-\mathcal T_{D_j}(F(\bar x)))\right)\\
		&=\bigcup\limits_{j\in I(\bar x)}\mathcal T_{D_j}(F(\bar x))=\mathcal T_D(F(\bar x))
	\end{align*}
	since we have $C+C\cap(-C)\subset C+C=C\subset C+C\cap(-C)$ for any closed, convex cone $C\subset\R^m$.
	This shows the validity of \eqref{eq:subspace_property1}. 
	For the proof of \eqref{eq:subspace_property2}, 
	we invoke \cref{lem:cones_to_disjunctive_structures} in order to see
	\begin{align*}
		\mathcal N_D(F(\bar x))
			&\subset\bigcup\limits_{i\in I(\bar x)}\widehat{\mathcal N}_{D_i}(F(\bar x))
			 \subset\sum\limits_{i\in I(\bar x)}\widehat{\mathcal N}_{D_i}(F(\bar x))
			 \subset\sum\limits_{i\in I(\bar x)}\spa\widehat{\mathcal N}_{D_i}(F(\bar x)).
	\end{align*}
	This already completes the proof. 
\end{proof}

We combine the above lemma with \cref{prop:subspaceCQ} and \cref{lem:primal_MPDC_LICQ}
in order to obtain the following result.
\begin{corollary}\label{cor:MPDC_LICQ_yields_explicit_formula_for_Frechet_normal_cone}
	Let $\bar x\in X$ be a feasible point of \eqref{eq:MPDC} where MPDC-LICQ is valid. 
	Then, NNAMCQ is valid for \eqref{eq:MPDC} at $\bar x$.
	Furthermore, we have
	\[
		\widehat{\mathcal{N}}_X(\bar x)=\nabla F(\bar x)^\top\widehat{\mathcal N}_D(F(\bar x)).
	\]
\end{corollary}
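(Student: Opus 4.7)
The plan is to combine the primal characterization of MPDC-LICQ from \cref{lem:primal_MPDC_LICQ} with the two properties from \cref{lem:subspace_of_tangent_cone} and then invoke \cref{prop:subspaceCQ}. There are really two independent claims to address: first the validity of NNAMCQ, and second the equality in \eqref{eq:Frechet_normal_cone_inverse_image}.

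For the NNAMCQ assertion, I would take any multiplier $\lambda\in\mathcal N_D(F(\bar x))$ with $\nabla F(\bar x)^\top\lambda = 0^n$. By inclusion \eqref{eq:subspace_property2} in \cref{lem:subspace_of_tangent_cone}, such a $\lambda$ automatically lies in $\sum_{i\in I(\bar x)}\spa\widehat{\mathcal N}_{D_i}(F(\bar x))$, so the hypothesis of MPDC-LICQ (\cref{def:MPDC-LICQ}) applies and forces $\lambda=0^m$. This is the easy half of the corollary.

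For the Fréchet normal cone equality, the strategy is to apply \cref{prop:subspaceCQ} with the subspace
\[
    L := \bigcap_{i\in I(\bar x)}\mathcal T_{D_i}(F(\bar x))^{\circ\perp}.
\]
This is a subspace of $\R^m$ since it is an intersection of lineality spaces of closed convex cones (as noted in \cref{sec:polars}). The invariance $\mathcal T_D(F(\bar x)) + L \subset \mathcal T_D(F(\bar x))$ is exactly \eqref{eq:subspace_property1}, and the surjectivity condition $\nabla F(\bar x)\R^n + L = \R^m$ is exactly the primal characterization of MPDC-LICQ from \cref{lem:primal_MPDC_LICQ}. Since MSCQ has just been secured via NNAMCQ (as stated in the paragraph following \eqref{eq:linearization_cone}), \cref{prop:subspaceCQ} yields $\widehat{\mathcal N}_X(\bar x) = \nabla F(\bar x)^\top\widehat{\mathcal N}_D(F(\bar x))$, while the opposite inclusion is the general estimate \eqref{eq:Frechet_normal_cone_inverse_image}.

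There is no genuine obstacle here: both halves are essentially bookkeeping that combines previously established lemmas. The only point to be slightly careful about is to confirm that the chosen $L$ is indeed a subspace (which follows from the fact that $\mathcal T_{D_i}(F(\bar x))^{\circ\perp}$ equals the lineality space of the closed convex cone $\mathcal T_{D_i}(F(\bar x))$, and intersections of subspaces remain subspaces), and to explicitly invoke that NNAMCQ implies MSCQ so that \cref{prop:subspaceCQ} is applicable. Both observations are already available in the preliminaries, so the proof reduces to citing the appropriate results in the correct order.
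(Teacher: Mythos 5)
Your proposal is correct and follows exactly the paper's own route: NNAMCQ via \eqref{eq:subspace_property2} combined with \cref{def:MPDC-LICQ}, then MSCQ, and finally \cref{prop:subspaceCQ} applied with the subspace $L=\bigcap_{i\in I(\bar x)}\mathcal T_{D_i}(F(\bar x))^{\circ\perp}$ using \eqref{eq:subspace_property1} and \cref{lem:primal_MPDC_LICQ}. Your write-up merely spells out the bookkeeping that the paper's two-sentence proof leaves implicit.
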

\begin{proof}
	The validity of NNAMCQ for \eqref{eq:MPDC} at $\bar x$ follows from \eqref{eq:subspace_property2} 
	and the definition of MPDC-LICQ.
	Particularly, MSCQ holds for \eqref{eq:MPDC} at $\bar x$.
	Now, we can combine the observation with \eqref{eq:subspace_property1}, 
	\cref{prop:subspaceCQ}, and \cref{lem:primal_MPDC_LICQ}
	in order to finish the proof.
\end{proof}

Clearly, our definition of MPDC-LICQ from \cref{def:MPDC-LICQ} is motivated by 
\cref{prop:subspaceCQ}. Thus, the main issue here is the choice of a \emph{reasonable}
subspace $L\subset\R^m$ such that the condition
\begin{equation}\label{eq:nondegeneracy}
		\mathcal T_D(F(\bar x))+L\subset\mathcal T_D(F(\bar x))
\end{equation}
holds for a fixed feasible point $\bar x\in X$ of \eqref{eq:MPDC}. 
As we have seen in \cref{lem:subspace_of_tangent_cone},
the subspace $\bigcap_{i\in I(\bar x)}\mathcal T_{D_i}(F(\bar x))^{\circ\perp}$ satisfies 
this condition while its annihilator is an upper approximation
of $\mathcal N_D(F(\bar x))$. We note that the validity of \eqref{eq:nondegeneracy} already implies
the relation $\mathcal T_D(F(\bar x))\cup L\subset\mathcal T_D(F(\bar x))$ which yields
$\widehat{\mathcal N}_D(F(\bar x))\cap L^\perp\supset\widehat{\mathcal N}_D(F(\bar x))$
by polarization 
and, thus, $L^\perp\supset \widehat{\mathcal N}_D(F(\bar x))$. 
Consequently, \cref{lem:cones_to_disjunctive_structures} shows that $L$ necessarily needs
to satisfy $L\subset(\bigcap_{i\in I(\bar x)}\mathcal T_{D_i}(F(\bar x))^\circ)^\perp$.
Due to \eqref{eq:subspace_candidates}, another reasonable candidate for the choice of
$L$ would be $\sum_{i\in I(\bar x)}\mathcal T_{D_i}(F(\bar x))^{\circ\perp}$.
However, considering e.g.\ $r=m:=2$, $F(\bar x):=0^2$, $D_1:=\R\times\{0\}$, and $D_2:=\{0\}\times\R^+$,
one can easily check that this subspace is still too large since it violates 
the condition \eqref{eq:nondegeneracy}.
Nevertheless, it might be possible that there is a subspace $L$ satisfying
\[
	\bigcap\limits_{i\in I(\bar x)}\mathcal T_{D_i}(F(\bar x))^{\circ\perp}
	\subsetneq
	L
	\subsetneq
	\sum\limits_{i\in I(\bar x)}\mathcal T_{D_i}(F(\bar x))^{\circ\perp}
\]
as well as \eqref{eq:nondegeneracy}. This way, the resulting
LICQ-type condition $\nabla F(\bar x)\R^n+L=\R^m$ would be less restrictive than
MPDC-LICQ from \cref{def:MPDC-LICQ}. However, it is not clear whether this
condition can be used to infer all the results of this paper which are mainly
valid under MPDC-LICQ.

Let us briefly interrelate the constraint qualification MPDC-LICQ with other prominent
constraint qualifications from disjunctive programming.
\begin{remark}\label{rem:MPDC_LICQ_yields_GACQ_and_GGCQ}
	Let $\bar x\in X$ be a feasible point of \eqref{eq:MPDC} where MPDC-LICQ is valid.
	Then, due to \cref{cor:MPDC_LICQ_yields_explicit_formula_for_Frechet_normal_cone}, we 
	obtain that the constraint qualifications NNAMCQ and MSCQ hold for
	\eqref{eq:MPDC} at $\bar x$ as well. 
	Particularly,  we obtain $\mathcal T_X(\bar x)=\mathcal L_X(\bar x)$ where
	$\mathcal L_X(\bar x)$
	denotes the linearization cone to $X$ at $\bar x$, see \eqref{eq:linearization_cone}. 
	In the literature of disjunctive
	programming, this condition is called \emph{generalized Abadie constraint qualification}
	(GACQ), see \cite[Definition~6]{FlegelKanzowOutrata2007}. 
	Furthermore, we obtain $\widehat{\mathcal N}_X(\bar x)=\mathcal L_X(\bar x)^\circ$ 
	by polarization, and the latter condition
	is called \emph{generalized Guignard constraint qualification} (GGCQ), see 
	\cite[Definition~6]{FlegelKanzowOutrata2007}.
\end{remark}

Now, it is possible to exploit \cref{prop:subspaceCQ} in order to derive necessary optimality
conditions of strong stationarity-type for \eqref{eq:MPDC}.
\begin{theorem}\label{thm:SSt_via_MPDC-LICQ}
	Let $\bar x\in\R^n$ be a locally optimal solution of \eqref{eq:MPDC} where MPDC-LICQ is valid.
	Then, there exists a uniquely determined multiplier
	$\lambda\in\R^m$ such that we have
	\[
		0^n=\nabla f(\bar x)+\nabla F(\bar x)^\top\lambda,\,
		\lambda\in\bigcap\limits_{i\in I(\bar x)}\widehat{\mathcal N}_{D_i}(F(\bar x)).
	\]
\end{theorem}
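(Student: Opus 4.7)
The plan is to combine three ingredients already established in the excerpt: the Fermat-type first-order condition at a local minimizer, the formula for the Fr\'echet normal cone of $X$ obtained in \cref{cor:MPDC_LICQ_yields_explicit_formula_for_Frechet_normal_cone}, and the disjunctive formula for $\widehat{\mathcal N}_D$ from \cref{lem:cones_to_disjunctive_structures}.

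First I would use the fact that a local minimizer $\bar x$ of \eqref{eq:MPDC} satisfies the standard Fermat-type necessary condition $-\nabla f(\bar x)\in\widehat{\mathcal N}_X(\bar x)$. Under MPDC-LICQ, \cref{cor:MPDC_LICQ_yields_explicit_formula_for_Frechet_normal_cone} gives $\widehat{\mathcal N}_X(\bar x)=\nabla F(\bar x)^\top\widehat{\mathcal N}_D(F(\bar x))$, so there exists $\lambda\in\widehat{\mathcal N}_D(F(\bar x))$ with $0^n=\nabla f(\bar x)+\nabla F(\bar x)^\top\lambda$. Since $D=\bigcup_{i=1}^r D_i$ is a union of (closed, convex) polyhedral sets, \cref{lem:cones_to_disjunctive_structures} yields $\widehat{\mathcal N}_D(F(\bar x))=\bigcap_{i\in I(\bar x)}\widehat{\mathcal N}_{D_i}(F(\bar x))$, which gives the desired membership of $\lambda$ and thus the existence part.

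For uniqueness, suppose $\lambda_1,\lambda_2$ both satisfy the claimed stationarity system. Subtracting the two gradient equations yields $\nabla F(\bar x)^\top(\lambda_1-\lambda_2)=0^n$. Because both multipliers lie in $\bigcap_{i\in I(\bar x)}\widehat{\mathcal N}_{D_i}(F(\bar x))$, their difference lies in $\widehat{\mathcal N}_{D_i}(F(\bar x))-\widehat{\mathcal N}_{D_i}(F(\bar x))\subset\spa\widehat{\mathcal N}_{D_i}(F(\bar x))$ for every $i\in I(\bar x)$, hence in particular in $\sum_{i\in I(\bar x)}\spa\widehat{\mathcal N}_{D_i}(F(\bar x))$. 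The definition of MPDC-LICQ (\cref{def:MPDC-LICQ}) then forces $\lambda_1-\lambda_2=0^m$.

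I do not anticipate a real obstacle here: the genuine work has already been discharged in \cref{cor:MPDC_LICQ_yields_explicit_formula_for_Frechet_normal_cone} (which required the non-trivial combination of \cref{prop:subspaceCQ}, \cref{lem:subspace_of_tangent_cone}, and \cref{lem:primal_MPDC_LICQ}). The only point that deserves a careful one-line justification is the passage from ``$\lambda_1-\lambda_2\in\bigcap_i\spa\widehat{\mathcal N}_{D_i}(F(\bar x))$'' to ``$\lambda_1-\lambda_2\in\sum_i\spa\widehat{\mathcal N}_{D_i}(F(\bar x))$,'' which is immediate from the trivial inclusion $\bigcap\subset\sum$ for a finite family of subspaces. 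Everything else is a direct chain of citations.
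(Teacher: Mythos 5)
Your proposal is correct and follows essentially the same route as the paper's own proof: the Fermat condition $-\nabla f(\bar x)\in\widehat{\mathcal N}_X(\bar x)$, the normal-cone formula from \cref{cor:MPDC_LICQ_yields_explicit_formula_for_Frechet_normal_cone} combined with the intersection formula from \cref{lem:cones_to_disjunctive_structures} for existence, and the span/sum argument with \cref{def:MPDC-LICQ} for uniqueness. No gaps.
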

\begin{proof}
	Due to \cite[Theorem~6.12]{RockafellarWets1998}, we have $-\nabla f(\bar x)\in\widehat{\mathcal N}_X(\bar x)$.
	Invoking \cref{lem:cones_to_disjunctive_structures} and 
	\cref{cor:MPDC_LICQ_yields_explicit_formula_for_Frechet_normal_cone}, 
	we obtain
	\[
		\widehat{\mathcal N}_X(\bar x)=\nabla F(\bar x)^\top\widehat{\mathcal N}_D(F(\bar x))
			=\nabla F(\bar x)^\top\left[\bigcap\limits_{i\in I(\bar x)}\widehat{\mathcal N}_{D_i}(F(\bar x))\right],
	\]
	i.e.\ the postulated stationarity system possesses a solution.
	
	It remains to show that the associated multiplier is uniquely determined. 
	Therefore, assume that there are $\lambda^1,\lambda^2\in\bigcap_{i\in I(\bar x)}\widehat{\mathcal N}_{D_i}(F(\bar x))$
	satisfying $0^n=\nabla f(\bar x)+\nabla F(\bar x)^\top\lambda^s$, $s=1,2$.
	This yields $0^n=\nabla F(\bar x)^\top(\lambda^1-\lambda^2)$. Moreover, for each $i\in I(\bar x)$, we have
	\[
		\lambda^1-\lambda^2\in\widehat{\mathcal N}_{D_i}(F(\bar x))-\widehat{\mathcal N}_{D_i}(F(\bar x))
					=\spa\widehat{\mathcal N}_{D_i}(F(\bar x)).
	\]
	This yields $\lambda^1-\lambda^2\in\sum_{i\in I(\bar x)}\spa\widehat{\mathcal N}_{D_i}(F(\bar x))$, and by
	validity of MPDC-LICQ, $\lambda^1=\lambda^2$ follows. This completes the proof.
\end{proof}

Note that the multiplier $\lambda$ in \cref{thm:SSt_via_MPDC-LICQ} is chosen from the Fr\'{e}chet normal cone 
$\widehat{\mathcal N}_D(F(\bar x))$. Keeping \cite[Definition~1]{FlegelKanzowOutrata2007} in mind, this observation
justifies to call the above necessary optimality condition a strong stationarity-type condition. 
\begin{definition}\label{def:SSt}
	A feasible point $\bar x\in X$ of \eqref{eq:MPDC} is called strongly stationary (S-stationary for short)
	if any only if there exists a multiplier $\lambda\in\bigcap_{i\in I(\bar x)}\widehat{\mathcal N}_{D_i}(F(\bar x))$
	which satisfies $0^n=\nabla f(\bar x)+\nabla F(\bar x)^\top\lambda$.
\end{definition}

Some general considerations regarding S-stationary points of disjunctive programs can be found in 
\cite{FlegelKanzowOutrata2007,BenkoGfrerer2017,BenkoGfrerer2018}. 
We note that for prominent classes of disjunctive programs like MPCCs, MPVCs, CCMPs, and MPSCs, there exist 
respective strong stationarity notions which can be obtained by applying \cref{def:SSt} to the specific
problem setting, see \cref{sec:application}.
With the aid of \cref{ex:standard_nonlinear_programming}, it is easily seen that for \eqref{eq:NLP},
the S-stationarity system equals the classical Karush-Kuhn-Tucker conditions.

Due to \cref{rem:MPDC_LICQ_yields_GACQ_and_GGCQ}, the validity of MPDC-LICQ at $\bar x$
implies that the tangent cone to $X$ at $\bar x$ equals the associated linearization cone.
As we will see in the lemmas below, we also obtain derivability of $X$ at $\bar x$ 
as well as a nice representation of the second-order
tangent sets to $X$ at $\bar x$ in each direction $d\in\mathcal T_X(\bar x)$. 
\begin{lemma}\label{lem:MPDC_LICQ_yields_derivability}
	Let $\bar x\in X$ be a feasible point of \eqref{eq:MPDC} where MPDC-LICQ is valid.
	Then, $X$ is derivable at $\bar x$.
\end{lemma}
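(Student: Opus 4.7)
The goal is to show that $\mathcal T_X(\bar x)\subset\mathcal T^\flat_X(\bar x)$, since the reverse inclusion always holds. The plan is to take an arbitrary $d\in\mathcal T_X(\bar x)$ and an arbitrary sequence $t_k\downarrow 0$, and then construct a sequence $d_k\to d$ with $\bar x+t_kd_k\in X$ for all $k$. The main tool is \cref{cor:MPDC_LICQ_yields_explicit_formula_for_Frechet_normal_cone}, which tells us that MPDC-LICQ implies the validity of MSCQ at $\bar x$, i.e., the feasibility mapping $x\mapsto\{F(x)\}-D$ is metrically subregular at $(\bar x,0^m)$, together with the derivability of $D$ (a union of polyhedra and hence derivable by \cref{lem:cones_to_disjunctive_structures}).

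First, observe that \cref{rem:MPDC_LICQ_yields_GACQ_and_GGCQ} guarantees $\mathcal T_X(\bar x)=\mathcal L_X(\bar x)$, so for the fixed direction $d\in\mathcal T_X(\bar x)$ we have $\nabla F(\bar x)d\in\mathcal T_D(F(\bar x))$. Since $D$ is derivable at $F(\bar x)$, for the given sequence $t_k\downarrow 0$ we can find $e_k\to \nabla F(\bar x)d$ with $F(\bar x)+t_ke_k\in D$ for every $k\in\N$. A first-order Taylor expansion then yields
\[
    \dist(F(\bar x+t_kd),D)\leq\norm{F(\bar x+t_kd)-F(\bar x)-t_ke_k}{2}
    \leq\norm{F(\bar x+t_kd)-F(\bar x)-t_k\nabla F(\bar x)d}{2}+t_k\norm{\nabla F(\bar x)d-e_k}{2},
\]
and both summands on the right are $o(t_k)$.

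Next, applying MSCQ, there exist constants $\kappa>0$ and $\varepsilon>0$ so that $\dist(x,X)\leq\kappa\dist(F(x),D)$ for all $x\in\mathbb U^\varepsilon(\bar x)$. For $k$ sufficiently large we have $\bar x+t_kd\in\mathbb U^\varepsilon(\bar x)$, hence $\dist(\bar x+t_kd,X)\leq\kappa\,\dist(F(\bar x+t_kd),D)=o(t_k)$. Since $X$ is closed, choose $x_k\in X$ attaining the distance and set $d_k:=(x_k-\bar x)/t_k$ for large $k$ (define $d_k:=d$ otherwise). Then $\bar x+t_kd_k=x_k\in X$ and $\norm{d_k-d}{2}=\dist(\bar x+t_kd,X)/t_k\to 0$, which shows $d\in\mathcal T^\flat_X(\bar x)$. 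Derivability of $X$ at $\bar x$ follows.

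There is no real obstacle beyond carefully combining the pieces: the derivability of the polyhedral union $D$ supplies the first-order approximation in the image space, and metric subregularity (a consequence of MPDC-LICQ via NNAMCQ) transports this approximation back to a feasible sequence in $X$. The same argument will be reused when handling parabolic derivability and second-order tangent sets to $X$.
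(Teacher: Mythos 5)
Your proof is correct, but it takes a genuinely different route from the paper's. The paper first establishes the transversality condition $\nabla F(\bar x)\R^n+\mathcal T^{\textup c}_D(F(\bar x))=\R^m$ from MPDC-LICQ via \cref{lem:primal_MPDC_LICQ} and the chain $\bigcap_{i\in I(\bar x)}\mathcal T_{D_i}(F(\bar x))^{\circ\perp}\subset\mathcal T^{\textup c}_D(F(\bar x))$, and then invokes an external result (Aubin--Frankowska, Theorem~4.3.3) to get the preimage formula $\mathcal T^\flat_X(\bar x)=\{d\,|\,\nabla F(\bar x)d\in\mathcal T^\flat_D(F(\bar x))\}$, concluding with derivability of $D$ and GACQ. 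You instead give a direct, self-contained argument: derivability of $D$ supplies a feasible first-order approximation in the image space, and metric subregularity (available by \cref{cor:MPDC_LICQ_yields_explicit_formula_for_Frechet_normal_cone}) pulls it back to a nearest-point sequence in $X$; this is exactly the mechanism the paper itself uses at second order in \cref{lem:outer_second_order_tangents_preimage}, so your proof is very much in the paper's spirit even though it avoids the citation. Two remarks on what each approach buys. First, your argument actually shows the stronger statement $\mathcal L_X(\bar x)\subset\mathcal T^\flat_X(\bar x)$ under MSCQ alone (the inclusion $\mathcal T_X(\bar x)\subset\mathcal L_X(\bar x)$ holds unconditionally, so you do not even need the equality from \cref{rem:MPDC_LICQ_yields_GACQ_and_GGCQ} where you invoke it), hence derivability of $X$ already follows from MSCQ plus the derivability of $D$; MPDC-LICQ enters only as a sufficient condition for MSCQ. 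Second, the paper's route is shorter on the page but leans on a black-box theorem and on the Clarke tangent cone, which your argument bypasses entirely. All the individual steps in your write-up check out: the distance estimate $\dist(F(\bar x+t_kd),D)=\oo(t_k)$, the identification $\Psi^{-1}(0^m)=X$ and $\dist(0^m,\Psi(x))=\dist(F(x),D)$ for the feasibility mapping, and the convergence $\norm{d_k-d}{2}=\dist(\bar x+t_kd,X)/t_k\to 0$.
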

\begin{proof}
	Due to \cref{lem:cones_to_disjunctive_structures}, we obtain the inclusions
	\[
		\bigcap\limits_{i\in I(\bar x)}\mathcal T_{D_i}(F(\bar x))^{\circ\perp}
		\subset
		\bigcap\limits_{i\in I(\bar x)}\mathcal T_{D_i}(F(\bar x))
		=
		\bigcap\limits_{i\in I(\bar x)}\mathcal T_{D_i}^\textup{c}(F(\bar x))
		\subset
		\mathcal T_D^\textup{c}(F(\bar x)).
	\]
	Invoking \cref{lem:primal_MPDC_LICQ}, the validity of MPDC-LICQ yields
	\[
		\nabla F(\bar x)\R^n+\mathcal T_D^\textup{c}(F(\bar x))=\R^m.
	\]
	Thus, \cite[Theorem~4.3.3]{AubinFrankowska2009} can be applied in order to obtain
	\[
		\mathcal T_X^\flat(\bar x)=\left\{d\in\R^n\,\middle|\,\nabla F(\bar x)d\in\mathcal T^\flat_D(F(\bar x))\right\}.
	\]
	Since $D$ is derivable at $\bar x$, see \cref{lem:cones_to_disjunctive_structures}, this
	yields $\mathcal T_X^\flat(\bar x)=\mathcal L_X(\bar x)$. 
	Due to \cref{rem:MPDC_LICQ_yields_GACQ_and_GGCQ}, the validity of MPDC-LICQ also
	guarantees $\mathcal T_X(\bar x)=\mathcal L_X(\bar x)$, i.e.\
	$\mathcal T^\flat_X(\bar x)=\mathcal T_X(\bar x)$ follows,
	and this yields the claim.
\end{proof}
\begin{lemma}\label{lem:MPDC_LICQ_yields_parabolic_derivability}
	Let $\bar x\in X$ be a feasible point of \eqref{eq:MPDC} where MPDC-LICQ is valid.
	Then, for each $d\in\mathcal T_X(\bar x)$, we have
		\[
			\mathcal T^2_X(\bar x;d)=
				\left\{
					h\in\R^n\,\middle|\,
						\nabla F(\bar x)h+\nabla^2F(\bar x)[d,d]
							\in\mathcal T_{\mathcal T_D(F(\bar x))}(\nabla F(\bar x)d)
				\right\}.
		\]
	Furthermore, $\mathcal T^2_X(\bar x;d)$ is nonempty
	and $X$ is parabolically derivable at $\bar x$ in direction $d$.
\end{lemma}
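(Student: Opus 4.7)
The plan is to obtain the formula by chaining the three pieces of machinery already in place: the preimage description of outer second-order tangents under MSCQ (\cref{lem:outer_second_order_tangents_preimage}), the explicit computation of second-order tangents of disjunctive polyhedral sets (\cref{lem:disjunctive_sets_parabolically_derivable}), and the primal characterization of MPDC-LICQ (\cref{lem:primal_MPDC_LICQ}). The nonemptiness and parabolic derivability will then follow by small additional arguments.

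First, I would note that \cref{cor:MPDC_LICQ_yields_explicit_formula_for_Frechet_normal_cone} already established MSCQ at $\bar x$, so \cref{lem:outer_second_order_tangents_preimage} applies with $P:=F$ and $\Omega:=D$ to yield
\[
	\mathcal T^2_X(\bar x;d)
	=
	\bigl\{h\in\R^n\,\bigm|\,\nabla F(\bar x)h+\nabla^2F(\bar x)[d,d]\in\mathcal T^2_D(F(\bar x);\nabla F(\bar x)d)\bigr\}.
\]
Here I use that $\nabla F(\bar x)d\in\mathcal T_D(F(\bar x))$ because \cref{rem:MPDC_LICQ_yields_GACQ_and_GGCQ} gives $\mathcal T_X(\bar x)=\mathcal L_X(\bar x)$. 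Substituting the identity $\mathcal T^2_D(F(\bar x);\nabla F(\bar x)d)=\mathcal T_{\mathcal T_D(F(\bar x))}(\nabla F(\bar x)d)$ from \cref{lem:disjunctive_sets_parabolically_derivable} then produces the asserted formula.

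For nonemptiness, I would set $L:=\bigcap_{i\in I(\bar x)}\mathcal T_{D_i}(F(\bar x))^{\circ\perp}$ and invoke \cref{lem:primal_MPDC_LICQ} to decompose $-\nabla^2F(\bar x)[d,d]=\nabla F(\bar x)h_0+\ell_0$ with $h_0\in\R^n$ and $\ell_0\in L$. Since $\nabla F(\bar x)d\in\mathcal T_D(F(\bar x))$, the vector $0$ belongs to $\mathcal T_{\mathcal T_D(F(\bar x))}(\nabla F(\bar x)d)=\mathcal T^2_D(F(\bar x);\nabla F(\bar x)d)$, and the additivity property
\[
	\mathcal T^2_D(F(\bar x);\nabla F(\bar x)d)+L\subset\mathcal T^2_D(F(\bar x);\nabla F(\bar x)d)
\]
of \cref{lem:disjunctive_sets_parabolically_derivable} together with the subspace property of $L$ yields $-\ell_0\in\mathcal T^2_D(F(\bar x);\nabla F(\bar x)d)$. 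Since $\nabla F(\bar x)h_0+\nabla^2F(\bar x)[d,d]=-\ell_0$, the vector $h_0$ lies in $\mathcal T^2_X(\bar x;d)$.

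Finally, for parabolic derivability I would inspect the proof of \cref{lem:outer_second_order_tangents_preimage}: its MSCQ-driven converse inclusion produces, starting from any sequences $t_k\downarrow 0$ and $r_k\to \nabla F(\bar x)h+\nabla^2F(\bar x)[d,d]$ with $F(\bar x)+t_k\nabla F(\bar x)d+\tfrac12 t_k^2 r_k\in D$, an associated sequence $h_k\to h$ with $\bar x+t_kd+\tfrac12 t_k^2h_k\in X$. Because $D$ is parabolically derivable by \cref{lem:disjunctive_sets_parabolically_derivable}, such $r_k$ can be chosen for \emph{every} prescribed $t_k\downarrow 0$, so the argument upgrades to $h\in\mathcal T^{\flat,2}_X(\bar x;d)$, giving $\mathcal T^{\flat,2}_X(\bar x;d)=\mathcal T^2_X(\bar x;d)$. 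The main (mild) obstacle is bookkeeping in this last step, namely verifying that the metric-subregularity estimate from the proof of \cref{lem:outer_second_order_tangents_preimage} is indeed uniform in the choice of $t_k\downarrow 0$; everything else is essentially a direct assembly of the tools already developed.
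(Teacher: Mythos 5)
Your proposal is correct and follows essentially the same route as the paper: MSCQ from MPDC-LICQ feeds \cref{lem:outer_second_order_tangents_preimage}, the identity $\mathcal T^2_D(F(\bar x);\nabla F(\bar x)d)=\mathcal T_{\mathcal T_D(F(\bar x))}(\nabla F(\bar x)d)$ and the additivity of the subspace $L$ come from \cref{lem:disjunctive_sets_parabolically_derivable}, nonemptiness uses \cref{lem:primal_MPDC_LICQ} (the paper translates an arbitrary $r$ by $L$ where you take $r=0$, an immaterial difference), and parabolic derivability is obtained by rerunning the metric-subregularity estimate along an arbitrary prescribed sequence $t_k\downarrow 0$. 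The "mild obstacle" you flag is not one: the constants $\kappa,\varepsilon$ in the subregularity estimate are fixed at $(\bar x,0^m)$ and do not depend on the chosen sequence.
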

\begin{proof}
	First, we note that the formula for the outer second-order tangent set follows from
	\cref{lem:outer_second_order_tangents_preimage} noting that
	$\mathcal T^2_D(F(\bar x);\nabla F(\bar x)d)=\mathcal T_{\mathcal T_D(F(\bar x))}(\nabla F(\bar x)d)$
	holds due to \cref{lem:disjunctive_sets_parabolically_derivable} while observing
	that the validity of MPDC-LICQ particularly yields that MSCQ is valid for \eqref{eq:MPDC}
	at $\bar x$. For later use, we would like to mention that this implies 
	$\mathcal T_X(\bar x)=\mathcal L_X(\bar x)$ as well, see \cref{rem:MPDC_LICQ_yields_GACQ_and_GGCQ}.
	
	Next, let us show that $\mathcal T^2_X(\bar x;d)$ is nonempty. 
	Due to validity of $d\in\mathcal L_X(\bar x)$, the set 
	$\mathcal T_{\mathcal T_D(F(\bar x))}(\nabla F(\bar x)d)=\mathcal T^2_D(F(\bar x);\nabla F(\bar x)d)$ 
	cannot be empty. We fix an arbitrary vector $r\in\mathcal T^2_D(F(\bar x);\nabla F(\bar x)d)$ and
	observe by means of \cref{lem:disjunctive_sets_parabolically_derivable} that
	\[
		\{r\}+\bigcap\limits_{i\in I(\bar x)}\mathcal T_{D_i}(F(\bar x))^{\circ\perp}
			\subset \mathcal T^2_D(F(\bar x);\nabla F(\bar x)d)
			=\mathcal T_{\mathcal T_D(F(\bar x))}(\nabla F(\bar x)d)
	\]
	holds true. By validity of MPDC-LICQ and \cref{lem:primal_MPDC_LICQ}, we now obtain
	\[
		\nabla F(\bar x)\R^n+\mathcal T_{\mathcal T_D(F(\bar x))}(\nabla F(\bar x)d)=\R^m.
	\]
	Particularly, we find some vector $h\in\R^n$ and some 
	$w\in\mathcal T_{\mathcal T_D(F(\bar x))}(\nabla F(\bar x)d)$
	such that we have $\nabla F(\bar x)h+w=\nabla^2F(\bar x)[d,d]$, i.e.\
	$-h\in\mathcal T^2_X(\bar x;d)$ is valid.
	
	Exploiting similar arguments as provided in the proof of \cref{lem:outer_second_order_tangents_preimage}
	while observing that $D$ is parabolically derivable due to
	\cref{lem:disjunctive_sets_parabolically_derivable}, we can show
	\[
		\mathcal T^{\flat,2}_X(\bar x;d)=
				\left\{
					h\in\R^n\,\middle|\,
						\nabla F(\bar x)h+\nabla^2F(\bar x)[d,d]
						\in\mathcal T_{\mathcal T_D(F(\bar x))}(\nabla F(\bar x)d)
				\right\},
	\]
	i.e.\ $X$ is parabolically derivable at $\bar x$ in direction $d\in\mathcal T_X(\bar x)$.
\end{proof}

\section{Second-order optimality conditions and MPDC-LICQ}\label{sec:second_order_conditions}

Recall that for each feasible point $\bar x\in X$ of \eqref{eq:MPDC}, $\mathcal L_X(\bar x)$ denotes the linearization
cone to $X$ at $\bar x$ and has been defined in \eqref{eq:linearization_cone}. 
For later use, we introduce the so-called critical cone to $X$ at $\bar x$ by means of
\begin{equation}\label{eq:critical_cone}
	\mathcal C_X(\bar x):=\left\{d\in\mathcal L_X(\bar x)\,\middle|\,\nabla f(\bar x)\cdot d\leq 0\right\}.
\end{equation}
Furthermore, we will exploit the so-called Lagrangian function $L\colon\R^n\times\R^m\to\R$ of \eqref{eq:MPDC}
which is given as stated below:
\[
	\forall (x,\lambda)\in\R^n\times\R^m\colon\quad
	L(x,\lambda):=f(x)+F(x)\cdot\lambda.
\]
Finally, let us introduce
\[
	S(\bar x):=\left\{\lambda\in\bigcap\nolimits_{i\in I(\bar x)}\widehat{\mathcal N}_{D_i}(F(\bar x))\,\middle|\,
						\nabla_xL(\bar x,\lambda)=0^n\right\},
\]
the set of all multipliers which solve the S-stationarity system associated with \eqref{eq:MPDC} at $\bar x$.
Clearly, $\bar x$ is an S-stationary point of \eqref{eq:MPDC} if and only if $S(\bar x)$ is nonempty.
\begin{lemma}\label{lem:critical_cone_and_SSt}
	Let $\bar x\in X$ be an S-stationary point of \eqref{eq:MPDC}. Then, we have
	\[
		\forall\lambda\in S(\bar x)\colon\quad 
		\mathcal C_X(\bar x)=\left\{d\in\R^n\,\middle|\,\nabla F(\bar x)d\in\mathcal T_D(F(\bar x))\cap\{\lambda\}^\perp\right\}.
	\]
\end{lemma}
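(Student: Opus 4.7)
The plan is to use S-stationarity to rewrite $\nabla f(\bar x)$ and then show that the sign condition $\nabla f(\bar x)\cdot d \leq 0$ collapses to an orthogonality condition against $\lambda$. The structure is essentially the standard polyhedral argument, but transferred through the disjunctive geometry via \cref{lem:cones_to_disjunctive_structures}.

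Fix $\lambda \in S(\bar x)$. From the S-stationarity relation we have $\nabla f(\bar x) = -\nabla F(\bar x)^\top \lambda$, so for any $d \in \R^n$,
\[
	\nabla f(\bar x)\cdot d = -\lambda\cdot \nabla F(\bar x)d.
\]
The key observation is that although $\lambda$ is defined as an element of the \emph{intersection} $\bigcap_{i\in I(\bar x)}\widehat{\mathcal N}_{D_i}(F(\bar x))$, this intersection coincides with $\widehat{\mathcal N}_D(F(\bar x))$ by \cref{lem:cones_to_disjunctive_structures}, and hence $\lambda\cdot v \leq 0$ for every $v\in\mathcal T_D(F(\bar x))$. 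Indeed, any such $v$ lies in some $\mathcal T_{D_i}(F(\bar x))$ with $i\in I(\bar x)$ by the same lemma, and $\lambda \in \widehat{\mathcal N}_{D_i}(F(\bar x)) = \mathcal T_{D_i}(F(\bar x))^\circ$.

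Now I would verify both inclusions. For $d \in \mathcal C_X(\bar x)$, set $v := \nabla F(\bar x)d \in \mathcal T_D(F(\bar x))$. The above gives $\lambda\cdot v \leq 0$, while the critical cone condition $\nabla f(\bar x)\cdot d \leq 0$ rewrites as $\lambda\cdot v \geq 0$. Both together force $\lambda\cdot v = 0$, so $v \in \mathcal T_D(F(\bar x)) \cap \{\lambda\}^\perp$. For the converse, if $\nabla F(\bar x)d \in \mathcal T_D(F(\bar x))\cap \{\lambda\}^\perp$, then $d\in\mathcal L_X(\bar x)$ and $\nabla f(\bar x)\cdot d = -\lambda \cdot \nabla F(\bar x)d = 0$, so $d \in \mathcal C_X(\bar x)$.

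There is no real obstacle here: the lemma is essentially a bookkeeping statement that exploits S-stationarity to turn the inequality defining the critical cone into an orthogonality condition. The only subtlety worth stating clearly is the observation that, thanks to \cref{lem:cones_to_disjunctive_structures}, an S-stationarity multiplier automatically polar-annihilates the (nonconvex) tangent cone $\mathcal T_D(F(\bar x))$ of the union, which is what makes the sign argument go through exactly as in the classical smooth case.
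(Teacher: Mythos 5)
Your proof is correct and follows essentially the same route as the paper: both arguments use the identity $\bigcap_{i\in I(\bar x)}\widehat{\mathcal N}_{D_i}(F(\bar x))=\widehat{\mathcal N}_D(F(\bar x))=\mathcal T_D(F(\bar x))^\circ$ from \cref{lem:cones_to_disjunctive_structures} to squeeze $\nabla f(\bar x)\cdot d=-\lambda\cdot\nabla F(\bar x)d$ between two inequalities of opposite sign, and the converse inclusion is the same direct verification. Nothing is missing.
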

\begin{proof}
	For each $d\in\mathcal C_X(\bar x)$ and $\lambda\in S(\bar x)$, we obtain
	\[
		0\geq \nabla f(\bar x)\cdot d=(-\nabla F(\bar x)^\top\lambda)\cdot d
			=-(\underbrace{\nabla F(\bar x)d}_{\in\mathcal T_D(F(\bar x))})\cdot\lambda\geq 0
	\]
	from $\lambda\in\bigcap_{i\in I(\bar x)}\widehat{\mathcal N}_{D_i}(F(\bar x))
	=\widehat{\mathcal N}_D(F(\bar x))=\mathcal T_D(F(\bar x))^\circ$, see \cref{lem:cones_to_disjunctive_structures}. 
	This yields $\nabla F(\bar x)d\in\{\lambda\}^\perp$ and shows the inclusion $\subset$. 
	
	If, on the other hand, $d\in\R^n$ satisfies $\nabla F(\bar x)d\in\mathcal T_D(F(\bar x))\cap\{\lambda\}^\perp$ for
	some $\lambda\in S(\bar x)$, then we have $d\in\mathcal L_X(\bar x)$ by definition of the linearization cone and
	\[
		0=(\nabla F(\bar x)d)\cdot\lambda=(\nabla F(\bar x)^\top\lambda)\cdot d=-\nabla f(\bar x)\cdot d
	\]
	which yields $d\in\mathcal C_X(\bar x)$.
\end{proof}

Using the theory on second-order tangent sets provided earlier, we are now in position to state
a second-order necessary optimality condition for \eqref{eq:MPDC} under validity of MPDC-LICQ.
Thus, our approach is closely related to the approaches used in 
\cite{BonnansShapiro2000,ChristofWachsmuth2018,Penot1998,RockafellarWets1998}
for the derivation of second-order necessary optimality conditions for different classes of
mathematical programs in the finite- and infinite-dimensional setting.
It seems to be worth mentioning that, in contrast to \cite[Theorem~4.3]{HoheiselKanzow2007}
where a second-order necessary optimality conditions for MPVCs is shown, 
we do not use an implicit function argument for our proof.
Some parts of the upcoming theorem's proof are inspired by
\cite[Lemma~5.8]{ChristofWachsmuth2018}. 
\begin{theorem}\label{thm:SONC}
	Let $\bar x\in X$ be a locally optimal solution of \eqref{eq:MPDC}
	where MPDC-LICQ is valid. 
	Then, we have
	\[
		\forall d\in\mathcal C_X(\bar x)\colon
		\quad
		d^\top\nabla^2_{xx}L(\bar x,\bar\lambda)d\geq 0
	\]
	where
	$\bar\lambda\in S(\bar x)$
	is the uniquely determined multiplier which solves the 
	S-stationarity system associated with $\bar x$, see \cref{thm:SSt_via_MPDC-LICQ}.
\end{theorem}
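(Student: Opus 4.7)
The plan is to establish the inequality by pushing along a parabolic feasible arc $y_k:=\bar x+t_kd+\tfrac12 t_k^2h_k$ whose second-order ``correction'' $h$ is chosen so that the Lagrangian Taylor expansion around $(\bar x,\bar\lambda)$ becomes tight. Fix $d\in\mathcal C_X(\bar x)$. Since MPDC-LICQ holds, \cref{thm:SSt_via_MPDC-LICQ} supplies the unique S-stationary multiplier $\bar\lambda\in S(\bar x)$, and \cref{lem:critical_cone_and_SSt} shows $\nabla F(\bar x)d\in\mathcal T_D(F(\bar x))\cap\{\bar\lambda\}^\perp$. In particular $\bar\lambda\cdot\nabla F(\bar x)d=0$ and, using $\nabla f(\bar x)=-\nabla F(\bar x)^\top\bar\lambda$, also $\nabla f(\bar x)\cdot d=0$. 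Moreover, $d\in\mathcal T_X(\bar x)$ by \cref{rem:MPDC_LICQ_yields_GACQ_and_GGCQ}.

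The central step is the construction of an appropriate $h\in\mathcal T^2_X(\bar x;d)$. Set $L_0:=\bigcap_{i\in I(\bar x)}\mathcal T_{D_i}(F(\bar x))^{\circ\perp}$. By \cref{lem:primal_MPDC_LICQ}, MPDC-LICQ is equivalent to $\nabla F(\bar x)\R^n+L_0=\R^m$, so one can pick $h\in\R^n$ and $\ell\in L_0$ with $\nabla F(\bar x)h+\nabla^2F(\bar x)[d,d]=\ell$. Since $0\in\mathcal T^2_D(F(\bar x);\nabla F(\bar x)d)$ and, by \cref{lem:disjunctive_sets_parabolically_derivable}, this set is invariant under addition with $L_0$, we obtain $\ell\in\mathcal T^2_D(F(\bar x);\nabla F(\bar x)d)=\mathcal T_{\mathcal T_D(F(\bar x))}(\nabla F(\bar x)d)$. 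The characterization from \cref{lem:MPDC_LICQ_yields_parabolic_derivability} then places $h$ in $\mathcal T^2_X(\bar x;d)=\mathcal T^{\flat,2}_X(\bar x;d)$.

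Pick any $t_k\downarrow 0$; parabolic derivability yields a sequence $h_k\to h$ with $y_k:=\bar x+t_kd+\tfrac12 t_k^2h_k\in X$ for all $k$. A second-order Taylor expansion of $L(\cdot,\bar\lambda)$ around $\bar x$, combined with $\nabla_xL(\bar x,\bar\lambda)=0^n$, gives $L(y_k,\bar\lambda)-L(\bar x,\bar\lambda)=\tfrac12 t_k^2\,d^\top\nabla^2_{xx}L(\bar x,\bar\lambda)d+o(t_k^2)$. Taylor-expanding $F$ and using $\bar\lambda\cdot\nabla F(\bar x)d=0$ yields $\bar\lambda\cdot(F(y_k)-F(\bar x))=\tfrac12 t_k^2\,\bar\lambda\cdot w_k+o(t_k^2)$, where $w_k:=\nabla F(\bar x)h_k+\nabla^2F(\bar x)[d,d]\to\ell$. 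Because $\bar\lambda\in\bigcap_{i\in I(\bar x)}\mathcal T_{D_i}(F(\bar x))^\circ$ and $\ell\in L_0$, one has $\bar\lambda\cdot\ell=0$, so $\bar\lambda\cdot(F(y_k)-F(\bar x))=o(t_k^2)$. Substituting into the identity $L(y_k,\bar\lambda)-L(\bar x,\bar\lambda)=f(y_k)-f(\bar x)+\bar\lambda\cdot(F(y_k)-F(\bar x))$ and invoking local optimality $f(y_k)\geq f(\bar x)$, then dividing by $\tfrac12 t_k^2$ and letting $k\to\infty$, the claimed inequality drops out.

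The decisive difficulty lies in the choice of $h$: for a generic element of $\mathcal T^2_X(\bar x;d)$ one can only guarantee $\bar\lambda\cdot w\leq 0$ (from $\bar\lambda\in\widehat{\mathcal N}_D(F(\bar x))$ and $w\in\mathcal T_{\mathcal T_D(F(\bar x))}(\nabla F(\bar x)d)$ together with $\bar\lambda\cdot\nabla F(\bar x)d=0$), which would produce only an upper bound on $d^\top\nabla^2_{xx}L(\bar x,\bar\lambda)d$. MPDC-LICQ is invoked precisely to steer the curvature correction $\ell$ into the subspace $L_0$, where the annihilation $\bar\lambda\cdot\ell=0$ upgrades the inequality to an equality and allows the argument to close.
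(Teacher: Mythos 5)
Your proof is correct. Every step checks out: $\nabla F(\bar x)d\in\mathcal T_D(F(\bar x))\cap\{\bar\lambda\}^\perp$ follows from \cref{lem:critical_cone_and_SSt}; the surjectivity $\nabla F(\bar x)\R^n+L_0=\R^m$ from \cref{lem:primal_MPDC_LICQ} indeed yields $h$ and $\ell\in L_0$ with $\nabla F(\bar x)h+\nabla^2F(\bar x)[d,d]=\ell$; since $0\in\mathcal T_{\mathcal T_D(F(\bar x))}(\nabla F(\bar x)d)$ and this set absorbs $L_0$ by \cref{lem:disjunctive_sets_parabolically_derivable}, the vector $\ell$ lies in it, so $h\in\mathcal T^{\flat,2}_X(\bar x;d)$ by \cref{lem:MPDC_LICQ_yields_parabolic_derivability}; and $\bar\lambda\cdot\ell=0$ because $\ell\in L_0\subset\widehat{\mathcal N}_{D_i}(F(\bar x))^\perp$ for each $i\in I(\bar x)\neq\varnothing$ while $\bar\lambda\in\widehat{\mathcal N}_{D_i}(F(\bar x))$. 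The Taylor expansions and the passage to the limit are routine.

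The route differs from the paper's in its organization, though it leans on exactly the same lemmas. The paper first establishes the general inequality $\nabla f(\bar x)\cdot h+d^\top\nabla^2f(\bar x)d\geq 0$ for \emph{every} $h\in\mathcal T^2_X(\bar x;d)$ (Taylor expansion of $f$ alone), then rewrites this as an infimum over $w=\nabla F(\bar x)h+\nabla^2F(\bar x)[d,d]$, uses MPDC-LICQ and the $L_0$-invariance of the second-order tangent set to drop the range constraint $w\in\nabla F(\bar x)\R^n$, and finally evaluates the resulting infimum as $\bar\lambda\cdot\nabla^2F(\bar x)[d,d]$ by showing $-\bar\lambda\cdot w\geq 0$ on the whole set. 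You instead construct a single optimal correction $h$ up front — the one for which $w$ lands in $L_0$ and is annihilated by $\bar\lambda$ — and expand the Lagrangian along the corresponding parabolic arc. This is a more direct, constructive presentation that avoids the infimum bookkeeping; the price is that you explicitly need parabolic derivability (or at least nonemptiness arguments) to realize the chosen $h$ by a feasible arc, which the paper also uses but in a more diffuse way. One cosmetic remark: in your closing paragraph, a generic $h$ yields $d^\top\nabla^2_{xx}L(\bar x,\bar\lambda)d\geq\bar\lambda\cdot w$ with $\bar\lambda\cdot w\leq 0$, i.e.\ a possibly negative \emph{lower} bound rather than an upper bound — but this is only a slip in the motivating commentary, not in the proof itself.
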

\begin{proof}
	First, we will prove the correctness of 
	\begin{equation}\label{eq:general_second_order_necesarry_condition}
		\forall d\in\mathcal C_X(\bar x)\,\forall h\in\mathcal T^2_X(\bar x;d)\colon\quad
		\nabla f(\bar x)\cdot h+d^\top\nabla^2f(\bar x)d\geq 0.
	\end{equation}
	Therefore, fix $d\in\mathcal C_X(\bar x)$ and $h\in\mathcal T^2_X(\bar x;d)$.
	Then, we find sequences $\{t_k\}_{k\in\N}\subset \R_+$ and $\{h_k\}_{k\in\N}\subset\R^n$
	such that $t_k\downarrow 0$, $h_k\to h$, and $\bar x+t_kd+\tfrac12t_k^2h_k\in X$ for all
	$k\in\N$. Performing a second-order Taylor expansion of $f$ at $\bar x$ yields
	\[
		f(\bar x+t_kd+\tfrac12t_k^2h_k)=f(\bar x)+t_k\nabla f(\bar x)\cdot d
			+\tfrac12t_k^2(\nabla f(\bar x)\cdot h_k+d^\top \nabla ^2f(\bar x)d)+\oo(t_k^2)
	\]
	for all $k\in\N$. Noting that we have $f(\bar x+t_kd+\tfrac12t_k^2h_k)\geq f(\bar x)$
	for sufficiently large $k\in\N$
	from the local optimality of $\bar x$ for \eqref{eq:MPDC} while 
	$\nabla f(\bar x)\cdot d\leq 0$ holds by definition of the critical cone, we obtain
	\[
		0\leq\frac2{t_k^2}
			\left(
				f(\bar x+t_kd+\tfrac12t_k^2h_k)-f(\bar x)
				-t_k\nabla f(\bar x)\cdot d
			\right)
		=\nabla f(\bar x)\cdot h_k+d^\top\nabla^2f(\bar x)d+2\frac{\oo(t_k^2)}{t_k^2}
	\]
	for sufficiently large $k\in\N$. Thus, taking the limit $k\to\infty$ yields
	\eqref{eq:general_second_order_necesarry_condition}.
	
	Due to validity of MPDC-LICQ, \eqref{eq:general_second_order_necesarry_condition}
	implies that
	\[
		\inf\left\{
			\nabla f(\bar x)\cdot h\,\middle|\,
				\nabla F(\bar x)h\in\mathcal T_{\mathcal T_D(F(\bar x))}(\nabla F(\bar x)d)
				-\{\nabla^2F(\bar x)[d,d]\}\right\}
		+d^\top\nabla^2f(\bar x)d\geq 0
	\]
	holds true for all $d\in\mathcal C_X(\bar x)$, see \cref{lem:MPDC_LICQ_yields_parabolic_derivability}. 
	By definition of S-stationarity, we
	have $\nabla f(\bar x)=-\nabla F(\bar x)^\top\bar\lambda$ which yields
	\begin{equation}\label{eq:SONC_via_directional_curvature_functional}
		\inf\left\{
			-\bar\lambda\cdot w\,\middle|\begin{aligned}
				&w\in\mathcal T_{\mathcal T_D(F(\bar x))}(\nabla F(\bar x)d)
					-\{\nabla^2F(\bar x)[d,d]\}\\
				&w\in\nabla F(\bar x)\R^n
				\end{aligned}
				\right\}
		+d^\top\nabla^2f(\bar x)d\geq 0
	\end{equation}
	for each $d\in\mathcal C_X(\bar x)$.
	
	Due to validity of MPDC-LICQ, for each 
	$w\in\mathcal T_{\mathcal T_D(F(\bar x))}(\nabla F(\bar x)d)-\{\nabla^2F(\bar x)[d,d]\}$,
	we find $v\in\R^n$ and 
	$\ell\in\bigcap_{i\in I(\bar x)}\mathcal T_{D_i}(F(\bar x))^{\circ\perp}$ such that
	$w=\nabla F(\bar x)v-\ell$ holds true, see \cref{lem:primal_MPDC_LICQ}. Noting that we have
	\[
		\bigcap\limits_{i\in I(\bar x)}\mathcal T_{D_i}(F(\bar x))^{\circ\perp}
		=\bigcap\limits_{i\in I(\bar x)}\widehat{\mathcal N}_{D_i}(F(\bar x))^\perp
		\subset\left(\bigcap\limits_{i\in I(\bar x)}\widehat{\mathcal N}_{D_i}(F(\bar x))\right)^\perp
		=\widehat{\mathcal N}_D(F(\bar x))^\perp,
	\]
	see \cref{lem:cones_to_disjunctive_structures},
	the relation $\bar\lambda\cdot\ell=0$ follows from
	$\bar\lambda\in\widehat{\mathcal N}_D(F(\bar x))$. Furthermore, we infer
	\begin{align*}
		\nabla F(\bar x)v=w+\ell
		&\in\mathcal T_{\mathcal T_D(F(\bar x))}(\nabla F(\bar x)d)
			-\{\nabla^2F(\bar x)[d,d]\}
			+\bigcap\limits_{i\in I(\bar x)}\mathcal T_{D_i}(F(\bar x))^{\circ\perp}\\
		&=\mathcal T^2_D(F(\bar x);\nabla F(\bar x)d)
			+\bigcap\limits_{i\in I(\bar x)}\mathcal T_{D_i}(F(\bar x))^{\circ\perp}
			-\{\nabla^2 F(\bar x)[d,d]\}\\
		&\subset \mathcal T^2_D(F(\bar x);\nabla F(\bar x)d)
			-\{\nabla^2F(\bar x)[d,d]\}\\
		&=\mathcal T_{\mathcal T_D(F(\bar x))}(\nabla F(\bar x)d)
			-\{\nabla^2F(\bar x)[d,d]\}
	\end{align*}
	from \cref{lem:disjunctive_sets_parabolically_derivable}.
	Summarizing these considerations, we have shown $\nabla F(\bar x)v\in\nabla F(\bar x)\R^n$ and 
	$\nabla F(\bar x)v\in\mathcal T_{\mathcal T_D(F(\bar x))}(\nabla F(\bar x)d)-\{\nabla^2F(\bar x)[d,d]\}$.
	Furthermore, we obtain the relation
	$-\bar\lambda\cdot w=-\bar\lambda(\nabla F(\bar x)v-\ell)=-\bar\lambda\cdot(\nabla F(\bar x)v)$.
	This leads to
	\begin{align*}
		&\inf\left\{
			-\bar\lambda\cdot w\,\middle|\begin{aligned}
				&w\in\mathcal T_{\mathcal T_D(F(\bar x))}(\nabla F(\bar x)d)
					-\{\nabla^2F(\bar x)[d,d]\}\\
				&w\in\nabla F(\bar x)\R^n
				\end{aligned}
				\right\}\\
		&\qquad
		\leq\inf\left\{
			-\bar\lambda\cdot w\,\middle|\,
				w\in\mathcal T_{\mathcal T_D(F(\bar x))}(\nabla F(\bar x)d)
					-\{\nabla^2F(\bar x)[d,d]\}
		\right\}.
	\end{align*}
	The converse inequality, however, is trivial. Thus, equality holds for
	the optimal values of the above programs and we obtain
	\begin{equation}\label{eq:SONC_with_trivial_infimum}
		\inf\left\{
			-\bar\lambda\cdot w\,\middle|\,
				w\in\mathcal T_{\mathcal T_D(F(\bar x))}(\nabla F(\bar x)d)
					-\{\nabla^2F(\bar x)[d,d]\}
				\right\}
		+d^\top\nabla^2f(\bar x)d\geq 0
	\end{equation}
	for each $d\in\mathcal C_X(\bar x)$ from \eqref{eq:SONC_via_directional_curvature_functional}.
	Clearly, we have
	\begin{align*}
		&\inf\left\{
			-\bar\lambda\cdot w\,\middle|\,
				w\in\mathcal T_{\mathcal T_D(F(\bar x))}(\nabla F(\bar x)d)
					-\{\nabla^2F(\bar x)[d,d]\}
				\right\}\\
		&\qquad =\inf\left\{-\bar\lambda\cdot w\,\middle|\,
				w\in\mathcal T_{\mathcal T_D(F(\bar x))}(\nabla F(\bar x)d)
			\right\}
			+\bar\lambda\cdot\nabla^2F(\bar x)[d,d].
	\end{align*}
	Finally, we note that
	\begin{align*}
		&\mathcal T_{\mathcal T_D(F(\bar x))}(\nabla F(\bar x)d)
		=\bigcup\limits_{i\in I(\bar x)}\mathcal T_{\mathcal T_{D_i}(F(\bar x))}(\nabla F(\bar x)d)\\
		&\qquad=\bigcup\limits_{i\in I(\bar x)}\mathcal T_{D_i}(F(\bar x))-\cone\{\nabla F(\bar x)d\}
		=\mathcal T_D(F(\bar x))-\cone\{\nabla F(\bar x)d\}
	\end{align*}
	holds true invoking \cref{lem:cones_to_disjunctive_structures} while
	noticing that the sets $\mathcal T_{D_i}(F(\bar x))$, $i\in I(\bar x)$, are
	closed, convex, polyhedral cones. Thus, for each 
	$w\in\mathcal T_{\mathcal T_D(F(\bar x))}(\nabla F(\bar x)d)$, we find a vector
	$r\in\mathcal T_D(F(\bar x))$ and $\alpha\geq 0$ such that $w=r-\alpha\nabla F(\bar x)d$
	holds. Recalling $\bar\lambda\in\widehat{\mathcal N}_D(F(\bar x))$ and 
	$d\in\mathcal C_X(\bar x)$, we have
	\[
		-\bar\lambda\cdot w=-\bar\lambda\cdot(r-\alpha\nabla F(\bar x)d)
		\geq\alpha(\nabla F(\bar x)^\top\bar\lambda)\cdot d
		=\alpha(-\nabla f(\bar x))\cdot d\geq 0
	\]
	by definition of S-stationarity, i.e.\
	\[
		\inf\left\{-\bar\lambda\cdot w\,\middle|\,
				w\in\mathcal T_{\mathcal T_D(F(\bar x))}(\nabla F(\bar x)d)
			\right\}
			+\bar\lambda\cdot\nabla^2F(\bar x)[d,d]=\bar\lambda\cdot\nabla^2F(\bar x)[d,d]
	\]
	follows for each $d\in\mathcal C_X(\bar x)$. Combining this with the above
	arguments, the desired result follows from \eqref{eq:SONC_with_trivial_infimum}
	by definition of the Lagrangian function. This completes the proof.
\end{proof}

The above result can be seen as a particular instance of \cite[Theorem~3.3]{Gfrerer2014}
where a second-order necessary optimality condition for \eqref{eq:MPDC} has been
derived using a completely different approach via the variational concepts of 
the directional limiting normal cone and directional metric subregularity.
One can easily check that by demanding validity of MPDC-LICQ at a given local 
minimizer of \eqref{eq:MPDC}, the assumptions of \cite[Theorem~3.3]{Gfrerer2014}
hold as well, i.e.\ the assumptions of \cref{thm:SONC} are more restrictive. 
On the other hand, one has to mention that checking validity of MPDC-LICQ and 
noting that this implies that there is only one S-stationary multiplier, the second-
order necessary optimality condition from \cref{thm:SONC} seems to be much easier
to verify than the one from \cite{Gfrerer2014}. 

Next, we state a second-order sufficient optimality condition for \eqref{eq:MPDC}.
Although this result follows from \cite[Theorem~3.21]{Gfrerer2014}, we provide a completely elementary
and simple proof here which generalizes a well-known strategy which has been used to verify second-order
sufficient optimality conditions for NLPs, MPCCs, MPVCs, and CCMPs in the past.
\begin{theorem}\label{thm:SOSC}
	Let $\bar x\in X$ be an S-stationary point of \eqref{eq:MPDC} where the condition
	\begin{equation}\label{eq:MPDC-SOSC}
		\forall d\in\mathcal C_X(\bar x)\setminus\{0^n\}\,\exists\lambda\in S(\bar x)\colon\quad
		d^\top\nabla^2_{xx}L(\bar x,\lambda)d>0
	\end{equation}
	holds. Then, there are constants $\varepsilon>0$ and $C>0$ such that the following quadratic-growth-condition
	is valid:
	\[
		\forall x\in X\cap\mathbb B^\varepsilon(\bar x)\colon\quad
		f(x)\geq f(\bar x)+C\norm{x-\bar x}{2}^2.
	\]
	Particularly, $\bar x$ is a strict local minimizer of \eqref{eq:MPDC}.
\end{theorem}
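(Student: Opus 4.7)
The plan is to argue by contradiction. Assume the quadratic-growth estimate fails; then instantiating its negation with $\varepsilon = C = 1/k$ yields a sequence $\{x_k\}_{k\in\N} \subset X \setminus \{\bar x\}$ with $x_k \to \bar x$ and $f(x_k) - f(\bar x) < \frac{1}{k} \norm{x_k - \bar x}{2}^2$. Writing $t_k := \norm{x_k - \bar x}{2}$ and $d_k := (x_k - \bar x)/t_k$, I pass to a subsequence so that $d_k \to d$ for some unit vector $d \in \R^n$, and aim to derive a contradiction from \eqref{eq:MPDC-SOSC} applied at $d$.

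The next step is to verify $d \in \mathcal C_X(\bar x) \setminus \{0^n\}$. By construction $d \in \mathcal T_X(\bar x) \subset \mathcal L_X(\bar x)$, and a first-order Taylor expansion of $f$ combined with the growth-violation bound gives $t_k \nabla f(\bar x) \cdot d_k + O(t_k^2) < \frac{1}{k} t_k^2$; dividing by $t_k$ and passing to the limit yields $\nabla f(\bar x) \cdot d \leq 0$. Hypothesis \eqref{eq:MPDC-SOSC} therefore provides a multiplier $\lambda \in S(\bar x)$ with $d^\top \nabla^2_{xx} L(\bar x, \lambda) d > 0$.

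The decisive estimate — and the one place where the disjunctive structure is essential — will be to show $\lambda \cdot (F(x_k) - F(\bar x)) \leq 0$ for all large $k$. Since each $D_i$ is closed and $F(x_k) \to F(\bar x)$, for all sufficiently large $k$ there exists an index $i(k) \in I(\bar x)$ with $F(x_k) \in D_{i(k)}$. Because $\lambda \in \widehat{\mathcal N}_{D_{i(k)}}(F(\bar x))$ and $D_{i(k)}$ is convex, the classical normal-cone inequality then yields the claim. A second-order Taylor expansion of $L(\cdot, \lambda)$ at $\bar x$, together with $\nabla_x L(\bar x, \lambda) = 0^n$ from S-stationarity, will give
\begin{equation*}
f(x_k) - f(\bar x) = -\lambda \cdot (F(x_k) - F(\bar x)) + \tfrac12 t_k^2\, d_k^\top \nabla^2_{xx} L(\bar x, \lambda) d_k + o(t_k^2) \geq \tfrac12 t_k^2\, d_k^\top \nabla^2_{xx} L(\bar x, \lambda) d_k + o(t_k^2),
\end{equation*}
whence dividing by $t_k^2$ and letting $k \to \infty$ contradicts $f(x_k) - f(\bar x) < \frac{1}{k} t_k^2$. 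I expect the main obstacle to be precisely this sign estimate: a generic Fréchet-normal argument would only bound $\lambda \cdot (F(x_k) - F(\bar x))$ by $o(t_k)$, which is too coarse; it is the convexity of the individual polyhedra $D_i$ that promotes this to a nonpositive contribution at the quadratic order needed to close the argument.
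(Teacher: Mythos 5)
Your proposal is correct and follows essentially the same route as the paper's proof: the contradiction setup with $\varepsilon=C=1/k$, the normalized directions converging to a critical direction $d$, the sign estimate $\lambda\cdot(F(x_k)-F(\bar x))\le 0$ obtained from $I(x_k)\subset I(\bar x)$ for large $k$ together with $\widehat{\mathcal N}_{D_i}(F(\bar x))=(D_i-\{F(\bar x)\})^\circ$ for the convex polyhedra $D_i$, and the second-order Taylor expansion of the Lagrangian using $\nabla_xL(\bar x,\lambda)=0^n$. The only cosmetic difference is that you fix the multiplier $\lambda$ supplied by \eqref{eq:MPDC-SOSC} at the outset, whereas the paper derives $0\ge d^\top\nabla^2_{xx}L(\bar x,\lambda)d$ for an arbitrary $\lambda\in S(\bar x)$ and then invokes the hypothesis; these are logically equivalent.
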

\begin{proof}
	Assume on the contrary that there is a sequence $\{x_k\}_{k\in\N}\subset X$ converging to $\bar x$ such that
	\[
		\forall k\in\N\colon\quad
		f(x_k)<f(\bar x)+\tfrac1k\norm{x_k-\bar x}{2}^2
	\]
	holds true. Set $t_k:=\norm{x_k-\bar x}{2}>0$ and observe that  $\{(x_k-\bar x)/t_k\}_{k\in\N}$
	is bounded. We assume w.l.o.g.\ that $(x_k-\bar x)/t_k\to d$ holds for some $d\in\R^n\setminus\{0^n\}$. By construction, 
	$d\in\mathcal T_X(\bar x)\subset\mathcal L_X(\bar x)$ is guaranteed. 
	For each $k\in\N$, we find $\xi_k\in\conv\{\bar x;x_k\}$ which satisfies $f(x_k)-f(\bar x)=\nabla f(\xi_k)\cdot(x_k-\bar x)$
	by means of the mean value theorem. Dividing by $t_k$ and taking the limit $k\to\infty$ while observing that 
	$\nabla f\colon\R^n\to\R^n$ is continuous, we have
	\[
		\nabla f(\bar x)\cdot d=\lim\limits_{k\to\infty}\nabla f(\xi_k)\cdot\frac{x_k-\bar x}{t_k}
			=\lim\limits_{k\to\infty}\frac{f(x_k)-f(\bar x)}{t_k}
			\leq\lim\limits_{k\to\infty}\tfrac1k\norm{x_k-\bar x}{2}=0.
	\]
	This yields $d\in\mathcal C_X(\bar x)\setminus\{0^n\}$.
	
	Choose $\lambda\in S(\bar x)$ arbitrarily. Then, we have $\lambda\in\bigcap_{i\in I(\bar x)}\widehat{\mathcal N}_{D_i}(F(\bar x))$.
	For sufficiently large $k\in\N$, $I(x_k)\subset I(\bar x)$ holds true. Thus, for sufficiently large $k\in\N$ and $i\in I(x_k)$,
	we have $\lambda\in \widehat{\mathcal N}_{D_i}(F(\bar x))=(D_i-\{F(\bar x)\})^\circ$ which shows $(F(x_k)-F(\bar x))\cdot\lambda\leq 0$.
	This yields
	\[
		f(\bar x)>f(x_k)-\tfrac1k\norm{x_k-\bar x}{2}^2\geq f(x_k)+(F(x_k)-F(\bar x))\cdot\lambda-\tfrac1k\norm{x_k-\bar x}{2}^2
	\]
	for sufficiently large $k\in\N$. Rearranging some terms and applying Taylor's theorem, we derive
	\begin{align*}
		L(\bar x,\lambda)
			&>L(x_k,\lambda)-\tfrac1k\norm{x_k-\bar x}{2}^2\\
			&=L(\bar x,\lambda)+\nabla_xL(\bar x,\lambda)(x_k-\bar x)
				+\tfrac12(x_k-\bar x)^\top\nabla^2_{xx}L(\bar x,\lambda)(x_k-\bar x)+\oo(\norm{x_k-\bar x}{2}^2).
	\end{align*}
	Now, we exploit the choice $\lambda\in S(\bar x)$ in order to infer
	\[
		0>\tfrac12(x_k-\bar x)^\top\nabla^2_{xx}L(\bar x,\lambda)(x_k-\bar x)+\oo(\norm{x_k-\bar x}{2}^2)
	\]
	for sufficiently large $k\in\N$. Division by $t_k^2$ and taking the limit $k\to\infty$ yield
	\[
		0\geq \tfrac12 d^\top\nabla^2_{xx}L(\bar x,\lambda)d
	\]
	which contradicts the theorem's assumptions since we have shown $d\in\mathcal C_X(\bar x)\setminus\{0^n\}$ 
	while $\lambda\in S(\bar x)$ was arbitrarily chosen. This completes the proof.
\end{proof}

The above result justifies the following definition.
\begin{definition}\label{def:MPDC-SOSC}
	Let $\bar x\in X$ be an S-stationary point of \eqref{eq:MPDC}.
	Then, the MPDC-tailored \emph{second-order sufficient condition} 
	(MPDC-SOSC for short) holds at $\bar x$
	if and only if \eqref{eq:MPDC-SOSC} is valid.
\end{definition}

The upcoming considerations will show that S-stationary points of \eqref{eq:MPDC}, 
where both MPDC-LICQ and MPDC-SOSC are valid, are locally isolated
w.r.t.\ primal \emph{and} dual variables. 
This property does not generally follow from the second-order growth
condition as \cite[Example~4.1]{GuoLinYe2013}, which has been stated in the
context of MPCCs, indicates.
For the validation of the upcoming result, 
we generalize the proof of 
\cite[Theorem~4.1]{GuoLinYe2013}.
\begin{theorem}\label{thm:stability_of_S_stationary_points}
	Let $\bar x\in X$ be an S-stationary point of \eqref{eq:MPDC} where MPDC-LICQ and MPDC-SOSC are valid.
	Then, there is some $\varepsilon>0$ such that we have
	\begin{equation}\label{eq:primal_dual_isolatedness}
		\forall x\in X\cap\mathbb B^\varepsilon(\bar x)\colon\quad
		\lambda\in S(x)\,\Longrightarrow\,x=\bar x,\,\lambda=\bar\lambda
	\end{equation}
	where $\bar\lambda$ is the uniquely determined vector from $S(\bar x)$.
\end{theorem}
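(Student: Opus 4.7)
The plan is a proof by contradiction in the spirit of Guo--Lin--Ye. Suppose the claim fails; then there exists a sequence $\{(x_k,\lambda_k)\}_{k\in\N}$ with $x_k\in X$, $x_k\to\bar x$, $\lambda_k\in S(x_k)$, and $(x_k,\lambda_k)\neq(\bar x,\bar\lambda)$ for every $k$. By closedness of $D_1,\dots,D_r$ and continuity of $F$, we have $I(x_k)\subset I(\bar x)$ for all large $k$; moreover, for each $i\in I(x_k)$ we can apply \cref{lem:Frechet_normals_to_polyhedral_set} to the polyhedral set $D_i$ (with $F(x_k)$ close to $F(\bar x)$) to obtain $\lambda_k\in\widehat{\mathcal N}_{D_i}(F(\bar x))\cap\{F(x_k)-F(\bar x)\}^\perp$. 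Picking any $j\in I(x_k)\subset I(\bar x)$, this shows $\lambda_k\in\sum_{i\in I(\bar x)}\spa\widehat{\mathcal N}_{D_i}(F(\bar x))$. If $\{\lambda_k\}$ were unbounded, normalizing $\hat\lambda_k:=\lambda_k/\norm{\lambda_k}{2}$, dividing $0^n=\nabla f(x_k)+\nabla F(x_k)^\top\lambda_k$ by $\norm{\lambda_k}{2}$, and passing to a subsequential limit $\bar\mu$ (with $\norm{\bar\mu}{2}=1$) would yield $\nabla F(\bar x)^\top\bar\mu=0^n$ together with $\bar\mu\in\sum_{i\in I(\bar x)}\spa\widehat{\mathcal N}_{D_i}(F(\bar x))$; MPDC-LICQ would then force $\bar\mu=0^m$, a contradiction.

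Hence $\{\lambda_k\}$ is bounded; along a subsequence $\lambda_k\to\lambda^*$. Passing to the limit in the S-stationarity system at $x_k$ and using that the (closed) set $\bigcap_{i\in I(\bar x)}\widehat{\mathcal N}_{D_i}(F(\bar x))$ contains every $\lambda_k$ yields $\lambda^*\in S(\bar x)$, so $\lambda^*=\bar\lambda$ by the uniqueness statement of \cref{thm:SSt_via_MPDC-LICQ}. If $x_k=\bar x$ for infinitely many $k$, then along that subsequence $\lambda_k\in S(\bar x)=\{\bar\lambda\}$, violating $(x_k,\lambda_k)\neq(\bar x,\bar\lambda)$. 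Thus $x_k\neq\bar x$ eventually; write $h_k:=x_k-\bar x$, $t_k:=\norm{h_k}{2}>0$, and extract a further subsequence so that $d_k:=h_k/t_k\to d$ with $\norm{d}{2}=1$. To see $d\in\mathcal C_X(\bar x)\setminus\{0^n\}$, note first $d\in\mathcal T_X(\bar x)\subset\mathcal L_X(\bar x)$. Since $\lambda_k\to\bar\lambda$ with $\lambda_k\in\widehat{\mathcal N}_D(F(x_k))$, \cref{lem:stability_property} applied to $S:=D$ and the sequence $\{F(x_k)\}$ gives $\bar\lambda\cdot(F(x_k)-F(\bar x))=0$ for large $k$; a first-order Taylor expansion of $F$, division by $t_k$, and passage to the limit then produce $\bar\lambda\cdot\nabla F(\bar x)d=0$, whence $\nabla f(\bar x)\cdot d=-\bar\lambda\cdot\nabla F(\bar x)d=0$.

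To close the argument I would subtract the S-stationarity equations at $x_k$ and $\bar x$ and Taylor-expand $\nabla f$ and $\nabla F$ about $\bar x$; the cross-term $(\nabla F(x_k)-\nabla F(\bar x))^\top(\lambda_k-\bar\lambda)$ is absorbed into $\oo(\norm{h_k}{2})$ since $\lambda_k\to\bar\lambda$, yielding
\[
	\nabla^2_{xx}L(\bar x,\bar\lambda)h_k+\nabla F(\bar x)^\top(\lambda_k-\bar\lambda)=\oo(\norm{h_k}{2}).
\]
Taking the inner product with $h_k$ and dividing by $t_k^2$, the main obstacle is to show that $(\lambda_k-\bar\lambda)\cdot\nabla F(\bar x)h_k/t_k^2\to 0$. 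The key trick is to split
\[
	(\lambda_k-\bar\lambda)\cdot\nabla F(\bar x)h_k
	=(\lambda_k-\bar\lambda)\cdot[F(x_k)-F(\bar x)]
	+(\lambda_k-\bar\lambda)\cdot\bigl[\nabla F(\bar x)h_k-(F(x_k)-F(\bar x))\bigr];
\]
the first summand vanishes for large $k$ by two applications of \cref{lem:stability_property} (once to $\lambda_k$, once to $\bar\lambda$), while the second is controlled by $\norm{\lambda_k-\bar\lambda}{2}\cdot O(\norm{h_k}{2}^2)=\oo(t_k^2)$ via a second-order Taylor estimate for $F$. Passing to the limit then yields $d^\top\nabla^2_{xx}L(\bar x,\bar\lambda)d=0$ with $d\in\mathcal C_X(\bar x)\setminus\{0^n\}$, contradicting MPDC-SOSC since $S(\bar x)=\{\bar\lambda\}$.
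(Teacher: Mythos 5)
Your overall strategy matches the paper's (both generalize the Guo--Lin--Ye argument: contradiction, boundedness of the multipliers, convergence to $\bar\lambda$, extraction of a critical direction, and a second-order contradiction), and your final step is a legitimate variant --- you Taylor-expand the difference of the two stationarity systems directly and kill the cross term $(\lambda_k-\bar\lambda)\cdot\nabla F(\bar x)h_k$ by the splitting trick together with \cref{lem:stability_property}, whereas the paper packages the same cancellation into an auxiliary function $\varphi_k$ and Rolle's theorem. That part of your argument is correct as written.

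There is, however, a genuine gap in your justification that $\lambda_k\to\bar\lambda$. You assert that the closed set $\bigcap_{i\in I(\bar x)}\widehat{\mathcal N}_{D_i}(F(\bar x))$ contains every $\lambda_k$, and conclude $\lambda^*\in S(\bar x)$ and hence $\lambda^*=\bar\lambda$ by uniqueness. But $\lambda_k\in S(x_k)$ only gives $\lambda_k\in\bigcap_{i\in I(x_k)}\widehat{\mathcal N}_{D_i}(F(x_k))$, and \cref{lem:Frechet_normals_to_polyhedral_set} then yields $\lambda_k\in\widehat{\mathcal N}_{D_i}(F(\bar x))$ only for $i\in I(x_k)$; for $i\in I(\bar x)\setminus I(x_k)$ you have $F(x_k)\notin D_i$ and no information whatsoever. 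Since $I(x_k)$ is in general a \emph{proper} subset of $I(\bar x)$ (think of an MPCC with a biactive index at $\bar x$ that is not biactive at $x_k$), the containment you invoke fails, and so the deduction $\lambda^*\in S(\bar x)$ does not follow. The conclusion $\lambda^*=\bar\lambda$ is nonetheless true, and the repair uses only material you already have: along a subsequence there is a single index $i_0\in I(x_k)\subset I(\bar x)$ common to all $k$, whence $\lambda^*\in\widehat{\mathcal N}_{D_{i_0}}(F(\bar x))$ by closedness; since also $\bar\lambda\in\widehat{\mathcal N}_{D_{i_0}}(F(\bar x))$, one gets
\[
	\lambda^*-\bar\lambda\in\spa\widehat{\mathcal N}_{D_{i_0}}(F(\bar x))
	\subset\sum\limits_{i\in I(\bar x)}\spa\widehat{\mathcal N}_{D_i}(F(\bar x)),
\]
while passing to the limit in the two stationarity equations gives $\nabla F(\bar x)^\top(\lambda^*-\bar\lambda)=0^n$; MPDC-LICQ then forces $\lambda^*=\bar\lambda$. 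This is exactly how the paper argues, and it is needed: your concluding Taylor argument relies on $\norm{\lambda_k-\bar\lambda}{2}\to 0$ and on the Hessian being evaluated at the unique multiplier $\bar\lambda$, so the step cannot simply be dropped.
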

\begin{proof}
	Due to validity of MPDC-LICQ, the S-stationarity multiplier $\bar\lambda$ associated with
	$\bar x$ is indeed uniquely determined, see \cref{thm:SSt_via_MPDC-LICQ}.
	Assume on the contrary, that we can find a sequence $\{x_k\}_{k\in\N}\subset X\setminus\{\bar x\}$ of feasible and
	S-stationary points of \eqref{eq:MPDC} converging to $\bar x$. 
	Then, we find $\lambda_k\in S(x_k)$ for each $k\in\N$. 
	
	Suppose that $\{\lambda_k\}_{k\in\N}$ is not bounded, i.e.\ we can assume 
	w.l.o.g.\ that $\norm{\lambda_k}{2}\to\infty$ holds 
	as $k\to\infty$. Thus, we can define $\tilde\lambda_k:=\lambda_k/\norm{\lambda_k}{2}$
	for sufficiently large $k\in\N$ and due to the boundedness of $\{\tilde\lambda_k\}_{k\in\N}$,
	we may assume w.l.o.g.\ that this sequence converges to some nonvanishing vector $\tilde\lambda\in\R^m$.
	Furthermore, we have
	\[
		\nabla F(\bar x)^\top\tilde\lambda
			=\lim\limits_{k\to\infty}\nabla F(x_k)^\top\tilde\lambda_k
			=\lim\limits_{k\to\infty}\frac{1}{\norm{\lambda_k}{2}}
				\underbrace{\left(\nabla f(x_k)+\nabla F(x_k)^\top\lambda_k\right)}_{=0^n}
			=0^n
	\]
	by continuity of $\nabla f\colon\R^n\to\R^n$ and $\nabla F\colon\R^n\to\R^{m\times n}$ as well as 
	$\lambda_k\in S(x_k)$.
	On the other hand, the inclusion $I(x_k)\subset I(\bar x)$ is valid for all sufficiently large $k\in\N$
	and, clearly, $I(x_k)\neq\varnothing$ is true as well since $x_k$ is feasible to \eqref{eq:MPDC}
	for each $k\in\N$. Noting that there are only finitely many indices in $I(\bar x)$, there must
	exist some $i_0\in I(\bar x)$ such that $i_0\in I(x_{k_l})$ for all $l\in\N$ 
	holds along a subsequence $\{x_{k_l}\}_{l\in\N}$
	of $\{x_k\}_{k\in\N}$. The definition of S-stationarity and the fact that the Fr\'{e}chet normal
	cone is a cone yield $\tilde \lambda_{k_l}\in\widehat{\mathcal N}_{D_{i_0}}(F(x_{k_l}))$. 
	Now, the continuity of $F$ can be used to infer 
	\[
		\tilde\lambda\in\mathcal N_{D_{i_0}}(F(\bar x))
			=\widehat{\mathcal N}_{D_{i_0}}(F(\bar x))
			\subset\sum\limits_{i\in I(\bar x)}\spa\widehat{\mathcal N}_{D_i}(F(\bar x)).
	\]
	Keeping $\nabla F(\bar x)^\top\tilde\lambda=0^n$ and $\tilde\lambda\neq 0^m$ in mind, this contradicts
	MPDC-LICQ.
	
	Due to the above arguments, we may assume w.l.o.g.\ that $\{\lambda_k\}_{k\in\N}$ converges to some $\lambda\in\R^m$.
	Similar arguments as above show the existence of $i_0\in I(\bar x)$ such that 
	$\lambda\in\widehat{\mathcal N}_{D_{i_0}}(F(\bar x))$ holds true. 
	Moreover, from $\nabla f(x_k)+\nabla F(x_k)^\top\lambda_k=0^n$ we obtain $\nabla f(\bar x)+\nabla F(\bar x)^\top\lambda=0^n$
	since $f$ and $F$ possess continuous derivatives. Keeping $\nabla f(\bar x)+\nabla F(\bar x)^\top\bar\lambda=0^n$ in mind,
	we derive $\nabla F(\bar x)^\top(\bar\lambda-\lambda)=0^n$. Moreover, 
	\begin{align*}
		\bar\lambda-\lambda
			&\in\left(\bigcap\limits_{i\in I(\bar x)}\widehat{\mathcal N}_{D_i}(F(\bar x))\right)
				-\widehat{\mathcal N}_{D_{i_0}}(F(\bar x))\\
			&\subset\widehat{\mathcal N}_{D_{i_0}}(F(\bar x))-\widehat{\mathcal N}_{D_{i_0}}(F(\bar x))
			 =\spa\widehat{\mathcal N}_{D_{i_0}}(F(\bar x))
			 \subset\sum\limits_{i\in I(\bar x)}\spa\widehat{\mathcal N}_{D_i}(F(\bar x))	
	\end{align*}
	follows, and by validity of MPDC-LICQ, $\lambda=\bar\lambda$ is obtained.
	
	We set $t_k:=\norm{x_k-\bar x}{2}>0$ and observe that $\{(x_k-\bar x)/t_k\}_{k\in\N}$ is a bounded sequence
	that converges w.l.o.g.\ to some nonvanishing direction $d\in\R^n$. Since $\{x_k\}_{k\in\N}\subset X$ holds, we
	infer $d\in\mathcal T_X(\bar x)\setminus\{0^n\}\subset\mathcal L_X(\bar x)\setminus\{0^n\}$.
	From $\lambda_k\in\widehat{\mathcal N}_D(F(x_k))$ for all $k\in\N$,
	$x_k\to\bar x$, and $\lambda_k\to\bar\lambda$, we obtain 
	$\lambda_k\cdot(F(x_k)-F(\bar x))=\bar\lambda\cdot(F(x_k)-F(\bar x))=0$ for all sufficiently large $k\in\N$,
	see \cref{lem:stability_property}. 
	This yields
	\begin{align*}
		(\nabla F(\bar x)d)\cdot\bar\lambda
		&=\left(\lim\limits_{k\to\infty}\frac{\nabla F(\bar x)(x_k-\bar x)}{t_k}\right)\cdot\bar\lambda
		=\lim\limits_{k\to\infty}\frac{(F(x_k)-F(\bar x))\cdot\bar\lambda}{t_k}=0,
	\end{align*}
	i.e.\ $F(\bar x)d\in\{\bar\lambda\}^\perp$ holds true. By means of \cref{lem:critical_cone_and_SSt}, we deduce
	$d\in\mathcal C_X(\bar x)\setminus\{0^n\}$.
	
	For each $k\in\N$, let us define a continuously differentiable function $\varphi_k\colon[0,1]\to\R$ by means of
	\begin{align*}
		\forall s\in[0,1]\colon\quad \varphi_k(s)&:=
			\nabla _xL((1-s)(\bar x,\bar\lambda)+s(x_k,\lambda_k))\cdot(x_k-\bar x)\\
			&\qquad -L((1-s)\bar x+sx_k,\lambda_k)+L((1-s)\bar x+sx_k,\bar\lambda).
	\end{align*}
	Due to the above remarks, we have
	\begin{align*}
		\varphi_k(0)
		&=
		\nabla_xL(\bar x,\bar\lambda)\cdot(x_k-\bar x)-L(\bar x,\lambda_k)+L(\bar x,\bar\lambda)\\
		&=
		(\bar\lambda-\lambda_k)\cdot F(\bar x)=(\bar\lambda-\lambda_k)\cdot F(x_k)\\
		&=
		\nabla_xL(x_k,\lambda_k)\cdot(x_k-\bar x)-L(x_k,\lambda_k)+L(x_k,\bar\lambda)
		=\varphi_k(1)
	\end{align*}
	for sufficiently large $k\in\N$. Due to $\varphi_k(0)=\varphi_k(1)$, we can apply Rolle's theorem
	in order to obtain the existence of $s_k\in(0,1)$ such that
	\begin{align*}
		0&=\varphi'_k(s_k)\\
		&=(x_k-\bar x)^\top\nabla^2_{xx}L((1-s_k)(\bar x,\bar\lambda)+s_k(x_k,\lambda_k))(x_k-\bar x)\\
		&\qquad	+(x_k-\bar x)^\top\nabla^2_{x,\lambda}L((1-s_k)(\bar x,\bar\lambda)+s_k(x_k,\lambda_k))(\lambda_k-\bar\lambda)\\
		&\qquad -\nabla_xL((1-s_k)\bar x+s_kx_k,\lambda_k)\cdot (x_k-\bar x)
			+\nabla_xL((1-s_k)\bar x+s_kx_k,\bar\lambda)\cdot (x_k-\bar x)\\
		&=(x_k-\bar x)^\top\nabla^2_{xx}L((1-s_k)(\bar x,\bar\lambda)+s_k(x_k,\lambda_k))(x_k-\bar x)\\
		&\qquad +(x_k-\bar x)\cdot[\nabla F((1-s_k)\bar x+s_kx_k)^\top(\lambda_k-\bar \lambda)]\\
		&\qquad -[\nabla F((1-s_k)\bar x+s_kx_k)^\top(\lambda_k-\bar\lambda)]\cdot(x_k-\bar x)\\
		&=(x_k-\bar x)^\top\nabla^2_{xx}L((1-s_k)(\bar x,\bar\lambda)+s_k(x_k,\lambda_k))(x_k-\bar x)
	\end{align*}
	holds for all $k\in\N$ which are sufficiently large. Next, we observe that the relation
	$(1-s_k)(\bar x,\bar\lambda)+s_k(x_k,\lambda_k)\to(\bar x,\bar\lambda)$ holds true as $k\to\infty$.
	From above, it follows
	\[
		0=\left(\frac{x_k-\bar x}{t_k}\right)^\top\nabla^2_{xx}L((1-s_k)(\bar x,\bar\lambda)+s_k(x_k,\lambda_k))
			\left(\frac{x_k-\bar x}{t_k}\right)
	\]
	for sufficiently large $k\in\N$, i.e.\ taking the limit $k\to\infty$ yields 
	$0=d^\top\nabla^2_{xx}L(\bar x,\bar\lambda)d$. This, however, contradicts the validity of MPDC-SOSC
	since we already verified that $d\in\mathcal C_X(\bar x)\setminus\{0^n\}$ holds true.
	Thus, the proof is completed.
\end{proof}

\section{Consequences for certain classes of disjunctive programs}\label{sec:application}

In this section, we are going to apply the obtained results to some prominent classes of disjunctive programs,
namely MPCCs, MPVCs, CCMPs, and MPSCs in order to check how the above theory relates to existing results in
the available literature on these problem classes.
Throughout the section, we consider twice continuously differentiable functions $f\colon\R^n\to\R$,
$g\colon\R^n\to\R^p$, $h\colon\R^n\to\R^q$, and $G,H\colon\R^n\to\R^l$. The component mappings of $g$, $h$,
$G$, and $H$ will be denoted by $g_j\colon\R^n\to\R$, $j=1,\ldots,p$, $h_j\colon\R^n\to\R$, $j=1,\ldots,q$,
and $G_j,H_j\colon\R^n\to\R$, $j=1,\ldots,l$, respectively.

\subsection{Application to MPCCs}\label{sec:MPCC}
A \emph{mathematical program with complementarity constraints} is an optimization problem of the form
\begin{equation}\label{eq:MPCC}\tag{MPCC}
	\begin{aligned}
		f(x)&\,\rightarrow\,\min &&&\\
		g_j(x)&\,\leq\,0&\quad&j=1,\ldots,p&\\
		h_j(x)&\,=\,0&&j=1,\ldots,q&\\
		0\,\leq\,G_j(x)\,\perp\,H_j(x)&\,\geq\,0&&j=1,\ldots,l.&
	\end{aligned}
\end{equation}
Due to the frequent appearance of \eqref{eq:MPCC} as an abstract model of real-world applications,
this problem class has been studied intensively from the theoretical and numerical point of view
during the last two decades, see e.g.\
\cite{Gfrerer2014,HoheiselKanzowSchwartz2013,LuoPangRalph1996,OutrataKocvaraZowe1998,ScheelScholtes2000,Ye2005}
and the references therein. 

In order to transfer \eqref{eq:MPCC} into a program of type \eqref{eq:MPDC}, we introduce the sets $S^\text{CC}_1:=\R_+\times\{0\}$ and
$S^\text{CC}_2:=\{0\}\times\R_+$ as well as $\mathcal J:=\{1,2\}^l$. Next, we set
\[
	\forall \alpha\in\mathcal J\colon\quad D^\text{CC}_\alpha:=\R^p_-\times\{0^q\}\times\prod\nolimits_{j=1}^lS_{\alpha_j}^\text{CC}
\]
and $D^\text{CC}:=\bigcup_{\alpha\in\mathcal J}D_\alpha^\text{CC}$. Furthermore, we introduce $F\colon\R^n\to\R^{p+q+2l}$ by means of
\begin{equation}\label{eq:definition_constraint_map}
	\forall x\in\R^n\colon\quad
	F(x):=\begin{bmatrix}
		g(x)^\top&h(x)^\top&G_1(x)&H_1(x)&\dots&G_l(x)&H_l(x)
	\end{bmatrix}^\top.
\end{equation}
Then, the feasible set of \eqref{eq:MPCC} is given by $X^\text{CC}:=\{x\in\R^n\,|\,F(x)\in D^\text{CC}\}$,
see \cref{fig:MPCC} for an illustration.
\begin{figure}[h]\centering
\includegraphics{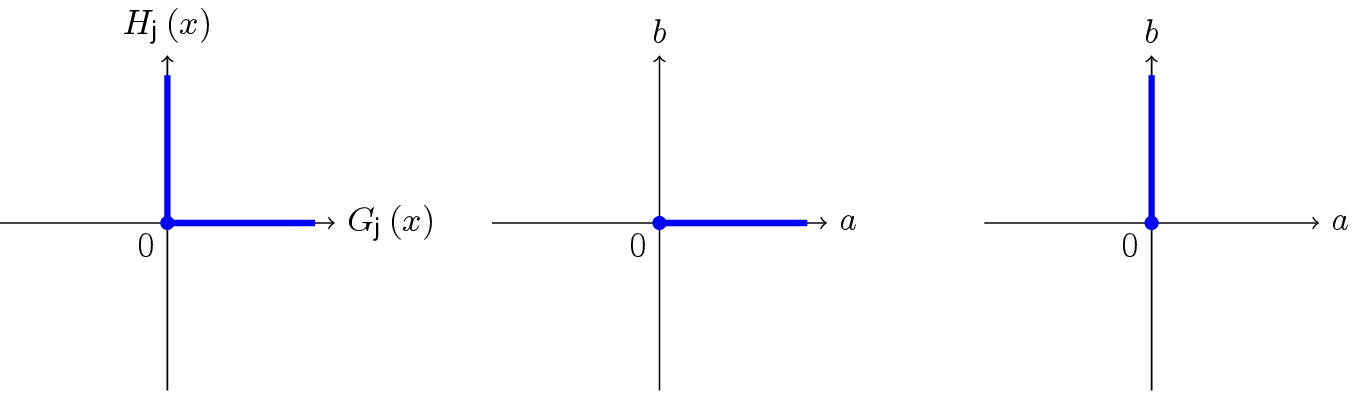}
\caption{Geometric illustrations of $X^\text{CC}$ (left), $S^\text{CC}_1$ (middle), and $S^\text{CC}_2$ (right), respectively.}
\label{fig:MPCC}
\end{figure}
For a feasible point $\bar x\in X^\text{CC}$ of \eqref{eq:MPCC}, let us introduce the following well-known index sets:
\begin{align*}
	I^{+0}(\bar x)&:=\{j\in\{1,\ldots,l\}\,|\,G_j(\bar x)>0,\,H_j(\bar x)=0\},\\
	I^{0+}(\bar x)&:=\{j\in\{1,\ldots,l\}\,|\,G_j(\bar x)=0,\,H_j(\bar x)>0\},\\
	I^{00}(\bar x)&:=\{j\in\{1,\ldots,l\}\,|\,G_j(\bar x)=0,\,H_j(\bar x)=0\}.
\end{align*}
We exploit the calculus rules for the tangent and Fr\'{e}chet normal cone to Cartesian products of (convex) sets, 
see \cite[Proposition~6.41]{RockafellarWets1998},
in order to obtain
\begin{align*}
	\mathcal T_{D^\text{CC}_\alpha}(F(\bar x))&=\mathcal T_{\R^p_-}(g(\bar x))\times\mathcal T_{\{0^q\}}(h(\bar x))
		\times\prod\nolimits_{j=1}^l\mathcal T_{S_{\alpha_j}^\text{CC}}((G_j(\bar x),H_j(\bar x))^\top),\\
	\widehat{\mathcal N}_{D^\text{CC}_\alpha}(F(\bar x))&=\widehat{\mathcal N}_{\R^p_-}(g(\bar x))\times\widehat{\mathcal N}_{\{0^q\}}(h(\bar x))
		\times\prod\nolimits_{j=1}^l\widehat{\mathcal N}_{S_{\alpha_j}^\text{CC}}((G_j(\bar x),H_j(\bar x))^\top)\\
\end{align*}
for each $\alpha\in\mathcal J$.
The tangent and Fr\'{e}chet normal cones to the sets $\R^p_-$ and $\{0^q\}$ 
have been characterized in \cref{ex:standard_nonlinear_programming} already.
A straightforward calculation shows
\begin{align*}
	\mathcal T_{S_{1}^\text{CC}}((G_j(\bar x),H_j(\bar x))^\top)&=
	\begin{cases}
		\R\times\{0\}	& j\in I^{+0}(\bar x),\\
		\varnothing		& j\in I^{0+}(\bar x),\\
		\R_+\times\{0\}	& j\in I^{00}(\bar x),
	\end{cases}	\\
	\mathcal T_{S^\text{CC}_2}((G_j(\bar x),H_j(\bar x))^\top)&=
	\begin{cases}
		\varnothing		& j\in I^{+0}(\bar x),\\
		\{0\}\times\R	& j\in I^{0+}(\bar x),\\
		\{0\}\times\R_+	& j\in I^{00}(\bar x),
	\end{cases}\\
	\widehat{\mathcal N}_{S^\text{CC}_1}((G_j(\bar x),H_j(\bar x))^\top)&=
	\begin{cases}
		\{0\}\times\R	& j\in I^{+0}(\bar x),\\
		\varnothing		& j\in I^{0+}(\bar x),\\
		\R_-\times\R	& j\in I^{00}(\bar x),
	\end{cases}	\\
	\widehat{\mathcal N}_{S^\text{CC}_2}((G_j(\bar x),H_j(\bar x))^\top)&=
	\begin{cases}
		\varnothing		& j\in I^{+0}(\bar x),\\
		\R\times\{0\}		& j\in I^{0+}(\bar x),\\
		\R\times\R_-	& j\in I^{00}(\bar x).
	\end{cases}
\end{align*}
Using $I(\bar x):=\{\alpha\in\mathcal J\,|\,F(\bar x)\in D^\text{CC}_\alpha\}$, we obtain the characterization
\[
	\alpha\in I(\bar x)\,\Longleftrightarrow\,
		\forall j\in I^{+0}(\bar x)\colon\,\alpha_j=1\,\land\,
		\forall j\in I^{0+}(\bar x)\colon\,\alpha_j=2
\]
for arbitrary $\alpha\in\mathcal J$. Thus, MPDC-LICQ from \cref{def:MPDC-LICQ} takes the following form
for problem \eqref{eq:MPCC} at the reference point $\bar x$:
\[
	\left.
		\begin{aligned}
			&0^n=\nabla g(\bar x)^\top\lambda+\nabla h(\bar x)^\top\rho+\nabla G(\bar x)^\top\mu+\nabla H(\bar x)^\top\nu,\\
			&\forall j\notin I^g(\bar x)\colon\,\lambda_j=0,\\
			&\forall j\in I^{+0}(\bar x)\colon\,\mu_j=0,\\
			&\forall j\in I^{0+}(\bar x)\colon\,\nu_j=0
		\end{aligned}
	\right\}
	\,\Longrightarrow\,
	\left\{
		\begin{aligned}
			&\lambda=0^p,\,\rho=0^q,\\
			&\mu=\nu=0^l.
		\end{aligned}
	\right.
\]
Here, the appearing index set $I^g(\bar x)$ has been defined in \cref{ex:standard_nonlinear_programming}.
The above condition is equivalent to the linear independence of the vectors from
\begin{align*}
	&\{\nabla g_j(\bar x)\,|\,j\in I^g(\bar x)\}
	\cup\{\nabla h_j(\bar x)\,|\,j\in\{1,\ldots,q\}\}\\
	&\qquad \cup\{\nabla G_j(\bar x)\,|\,j\in I^{0+}(\bar x)\cup I^{00}(\bar x)\}
	\cup\{\nabla H_j(\bar x)\,|\,j\in I^{+0}(\bar x)\cup I^{00}(\bar x)\}.
\end{align*}
This, however, is precisely the definition of the  prominent constraint qualification MPCC-LICQ, 
see e.g.\ \cite[Definition~2.8]{Ye2005}. A similar observation has been made in 
\cite[Section~4]{Gfrerer2014} using different arguments.
The associated S-stationarity system from \cref{def:SSt} reads as
\begin{align*}
	&0^n=\nabla f(\bar x)+\nabla g(\bar x)^\top\lambda+\nabla h(\bar x)^\top\rho+\nabla G(\bar x)^\top\mu+\nabla H(\bar x)^\top\nu,\\
	&\lambda\geq 0^p,\,\forall j\notin I^g(\bar x)\colon\,\lambda_j=0,\\
	&\forall j\in I^{+0}(\bar x)\colon\,\mu_j=0,\\
	&\forall j\in I^{0+}(\bar x)\colon\,\nu_j=0,\\
	&\forall j\in I^{00}(\bar x)\colon\,\mu_j,\nu_j\leq 0
\end{align*}
and equals the MPCC-tailored system of strong stationarity, see \cite[Definition~2.7]{Ye2005}.
Using the above formulas for the appearing tangent cones, the linearization cone from \eqref{eq:linearization_cone}
is given by
\[
	\mathcal L_{X^\text{CC}}(\bar x)
	=
	\left\{
		d\in\R^n\,\middle|\,
			\begin{aligned}
				\nabla g_j(\bar x)\cdot d&\,\leq\,0\quad j\in I^g(\bar x)\\
				\nabla h_j(\bar x)\cdot d&\,=\,0\quad j\in\{1,\ldots,q\}\\
				\nabla G_j(\bar x)\cdot d&\,=\,0\quad j\in I^{0+}(\bar x)\\
				\nabla H_j(\bar x)\cdot d&\,=\,0\quad j\in I^{+0}(\bar x)\\
				0\leq \nabla G_j(\bar x)\cdot d\,\perp\,\nabla H(\bar x)\cdot d&\,\geq\,0\quad j\in I^{00}(\bar x)
			\end{aligned}
	\right\}
\]
while the critical cone from \eqref{eq:critical_cone} can be easily represented by means of \cref{lem:critical_cone_and_SSt}
whenever the reference point $\bar x$ is S-stationary for \eqref{eq:MPCC}.
As a consequence, \cref{thm:SSt_via_MPDC-LICQ,thm:SONC,thm:SOSC} recover results from the classical
paper \cite{ScheelScholtes2000} while the stability result from \cref{thm:stability_of_S_stationary_points} can be found
in slightly stronger form in \cite[Theorem~4.1]{GuoLinYe2013}.

\subsection{Application to MPVCs}

An optimization problem of type
\begin{equation}\label{eq:MPVC}\tag{MPVC}
	\begin{aligned}
		f(x)&\,\rightarrow\,\min &&&\\
		g_j(x)&\,\leq\,0&\quad&j=1,\ldots,p&\\
		h_j(x)&\,=\,0&&j=1,\ldots,q&\\
		H_j(x)&\,\geq\,0&&j=1,\ldots,l&\\
		G_j(x)H_j(x)&\,\leq\,0&&j=1,\ldots,l
	\end{aligned}
\end{equation}
is called a \emph{mathematical program with vanishing constraints}. The term \emph{vanishing} reflects the observation that whenever
a point $x\in\R^n$ satisfies $H_j(x)=0$ for some $j\in\{1,\ldots,l\}$, then the constraint $G_j(x)H_j(x)\leq 0$ is trivially
satisfied. Problems of type \eqref{eq:MPVC} arise when searching for the optimal design of a truss structure or in the
context of mixed-integer optimal control, see \cite{AchtzigerKanzow2008,Kirches2011,PalagachevGerdts2015}. Theoretical and
numerical results on problems of type \eqref{eq:MPVC} can be found in e.g.\ 
\cite{AchtzigerKanzow2008,AchtzigerKanzowHoheisel2012,Hoheisel2009,HoheiselKanzow2007,HoheiselKanzowSchwartz2012,IzmailovSolodov2009}.

Again, we want to transfer \eqref{eq:MPVC} into a problem of type \eqref{eq:MPDC}. 
Therefore, we define $S^\text{VC}_1,S^\text{VC}_2\subset\R^2$ by means of 
$S^\text{VC}_1:=\R_+\times\{0\}$ and $S^\text{VC}_2:=\R_-\times\R_+$. Furthermore, we set $\mathcal J:=\{1,2\}^l$, 
\[
	\forall \alpha\in\mathcal J\colon\quad
	D_\alpha^\text{VC}:=\R^p_-\times\{0^q\}\times\prod\nolimits_{j=1}^lS_{\alpha_j}^\text{VC},
\]
as well as $D^\text{VC}:=\bigcup_{\alpha\in\mathcal J}D^\text{VC}_\alpha$. 
Using the function $F$ defined in \eqref{eq:definition_constraint_map}, the feasible set of \eqref{eq:MPVC} can be expressed 
in the compact form $X^\text{VC}:=\{x\in\R^n\,|\,F(x)\in D^\text{VC}\}$, see \cref{fig:MPVC}.
\begin{figure}[h]\centering
\includegraphics{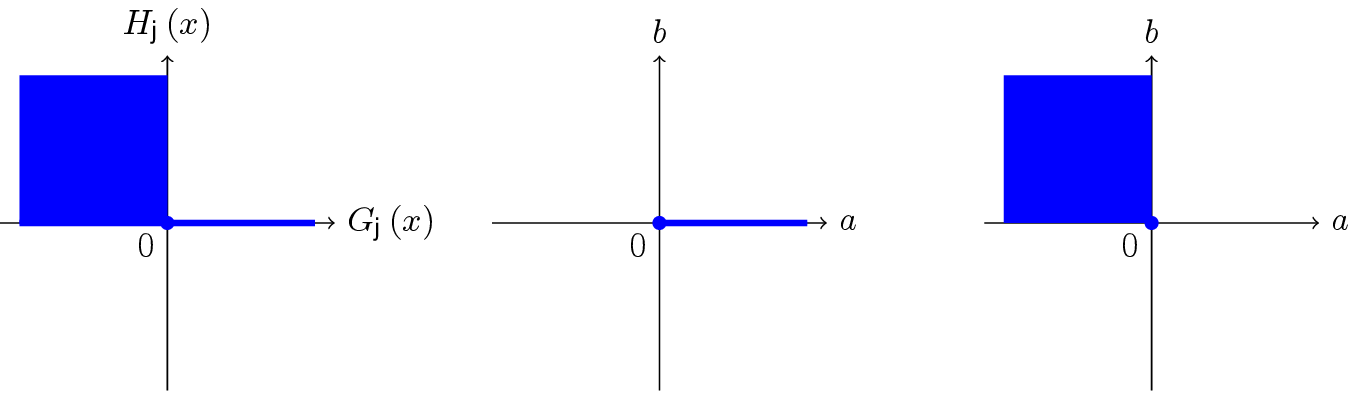}
\caption{Geometric illustrations of $X^\text{VC}$ (left), $S^\text{VC}_1$ (middle), and $S^\text{VC}_2$ (right), respectively.}
\label{fig:MPVC}
\end{figure}
Let us fix a feasible point $\bar x\in X^\text{VC}$ of \eqref{eq:MPVC}. We will exploit the index sets defined below:
\begin{align*}
	I_{+0}(\bar x):=\{j\in\{1,\ldots,l\}\,|\,H_j(\bar x)>0,\,G_j(\bar x)=0\},\\
	I_{+-}(\bar x):=\{j\in\{1,\ldots,l\}\,|\,H_j(\bar x)>0,\,G_j(\bar x)<0\},\\
	I_{0+}(\bar x):=\{j\in\{1,\ldots,l\}\,|\,H_j(\bar x)=0,\,G_j(\bar x)>0\},\\
	I_{0-}(\bar x):=\{j\in\{1,\ldots,l\}\,|\,H_j(\bar x)=0,\,G_j(\bar x)<0\},\\
	I_{00}(\bar x):=\{j\in\{1,\ldots,l\}\,|\,H_j(\bar x)=0,\,G_j(\bar x)=0\}.
\end{align*}
Furthermore, we set $I(\bar x):=\{\alpha\in\mathcal J\,|\,F(\bar x)\in D^{\text{VC}}_\alpha\}$.
Then, we obtain the following characterization for any $\alpha\in\mathcal J$:
\[
	\alpha\in I(\bar x)\,\Longleftrightarrow\,
		\forall j\in I_{0+}(\bar x)\colon\,\alpha_j=1\,\land\,
		\forall j\in I_{+0}(\bar x)\cup I_{+-}(\bar x)\cup I_{0-}(\bar x)\colon\,\alpha_j=2.
\]
Similar as in \cref{sec:MPCC}, the tangent and Fr\'{e}chet normal cones to $S^\textup{VC}_1$
and $S^\textup{VC}_2$ can be computed. As a result, one obtains that 
the constraint qualification  MPDC-LICQ takes the following form for \eqref{eq:MPVC}:
\[
	\left.
		\begin{aligned}
			&0^n=\nabla g(\bar x)^\top\lambda+\nabla h(\bar x)^\top\rho+\nabla G(\bar x)^\top\mu+\nabla H(\bar x)^\top\nu,\\
			&\forall j\notin I^g(\bar x)\colon\,\lambda_j=0,\\
			&\forall j\in I_{+-}(\bar x)\cup I_{0+}(\bar x)\cup I_{0-}(\bar x)\colon\,\mu_j=0,\\
			&\forall j\in I_{+0}(\bar x)\cup I_{+-}(\bar x)\colon\,\nu_j=0
		\end{aligned}
	\right\}
	\,\Longrightarrow
	\,
	\left\{
		\begin{aligned}
			&\lambda=0^p,\,\rho=0^q,\\
			&\mu=\nu=0^l.
		\end{aligned}
	\right.
\]
This condition is equivalent to the linear independence of the vectors from
\begin{align*}
	&\{\nabla g_j(\bar x)\,|\,j\in I^g(\bar x)\}
	\cup\{\nabla h_j(\bar x)\,|\,j\in\{1,\ldots,q\}\}\\
	&\qquad \cup\{\nabla G_j(\bar x)\,|\,j\in I_{+0}(\bar x)\cup I_{00}(\bar x)\}
	\cup\{\nabla H_j(\bar x)\,|\,j\in I_{0+}(\bar x)\cup I_{0-}(\bar x)\cup I_{00}(\bar x)\}
\end{align*}
which is referred to as MPVC-LICQ  in the literature, see \cite[Definition~4.1]{HoheiselKanzow2007}.
The associated S-stationarity system from \cref{def:SSt} reads as follows:
\begin{align*}
	&0^n=\nabla f(\bar x)+\nabla g(\bar x)^\top\lambda+\nabla h(\bar x)^\top\rho+\nabla G(\bar x)^\top\mu+\nabla H(\bar x)^\top\nu,\\
	&\lambda\geq 0^p,\,\forall j\notin I^g(\bar x)\colon\,\lambda_j=0,\\
	&\forall j\in I_{+0}(\bar x)\colon\,\mu_j\geq 0,\\
	&\forall j\in I_{+-}(\bar x)\cup I_{0+}(\bar x)\cup I_{0-}(\bar x)\cup I_{00}(\bar x)\colon\,\mu_j=0,\\
	&\forall j\in I_{+0}(\bar x)\cup I_{+-}(\bar x)\colon\,\nu_j=0,\\
	&\forall j\in I_{0-}(\bar x)\cup I_{00}(\bar x)\colon\,\nu_j\leq 0.
\end{align*}
We note that this system precisely coincides with the system of strong stationarity for \eqref{eq:MPVC}
which has been stated in \cite[Definition~2.1]{HoheiselKanzow2007}.
One can easily check that the linearization cone from \eqref{eq:linearization_cone}
and the critical cone from \eqref{eq:critical_cone} equal the respective cones from 
\cite[Section~4]{HoheiselKanzow2007}. Thus, our \cref{thm:SSt_via_MPDC-LICQ,thm:SONC,thm:SOSC}
precisely recover \cite[Corollary~4.5]{HoheiselKanzow2009} and \cite[Theorems~4.3, 4.4]{HoheiselKanzow2007}.
Additionally, the stability
result from \cref{thm:stability_of_S_stationary_points} is valid for \eqref{eq:MPVC} as well.
To the best of our knowledge, this fact cannot be found in the available literature on \eqref{eq:MPVC}.

\subsection{Application to CCMPs}

Let $\norm{\cdot}{0}\colon\R^n\to\R$ be the map which assigns to each vector from $\R^n$ the number of
its nonzero components. For some constant $\kappa\in\{1,\ldots,n-1\}$, 
\begin{equation}\label{eq:CCMP}\tag{CCMP}
	\begin{aligned}
		f(x)&\,\rightarrow\,\min &&&\\
		g_j(x)&\,\leq\,0&\quad&j=1,\ldots,p&\\
		h_j(x)&\,=\,0&&j=1,\ldots,q&\\
		\norm{x}{0}&\,\leq\,\kappa&&&
	\end{aligned}
\end{equation}
is a nonlinear so-called \emph{cardinality-constrained optimization problem}.
Problems of the form \eqref{eq:CCMP} appear frequently in the context of e.g.\
compressed sensing or portfolio optimization. 
Recently, first- and second-order optimality conditions as well as a relaxation-based numerical solution
method for \eqref{eq:CCMP} were investigated in
\cite{BucherSchwartz2018,BurdakovKanzowSchwartz2016,CervinkaKanzowSchwartz2016}.
The considerations in these papers are based on the surrogate problem
\begin{equation}\label{eq:surrogateCCMP}
	\begin{aligned}
		f(x)&\,\rightarrow\,\min &&&\\
		g_j(x)&\,\leq\,0&\quad&j=1,\ldots,p&\\
		h_j(x)&\,=\,0&&j=1,\ldots,q&\\
		\mathtt e\cdot y-(n-\kappa)&\,\geq\, 0&&&\\
		x_iy_i&\,=\,0&&i=1,\ldots,n&\\
		0\,\leq\,y_i&\,\leq\,1&&i=1,\ldots ,n&
	\end{aligned}
\end{equation}
which is closely related to \eqref{eq:CCMP}, see \cite[Section~3]{BurdakovKanzowSchwartz2016} for details.
Above, $\mathtt e\in\R^n$ represents the all-ones vector.
As suggested in \cite{PanXiuFan2017}, it is also possible to tackle \eqref{eq:CCMP} directly by exploiting
a variational analysis approach. Here, we will strike the latter path.

In order to transfer \eqref{eq:CCMP} into a program of type \eqref{eq:MPDC}, let us introduce the index set
$\mathcal J:=\{\alpha\in\{1,2\}^n\,|\,\sum_{i=1}^n\alpha_i=n+\kappa\}$. Now, for each $\alpha\in\{1,2\}^n$,
we introduce $\R^n_\alpha:=\spa\{\mathtt e_i\,|\,\alpha_i=2\}$ where $\mathtt e_i\in\R^n$ denoted the $i$-th
unit vector from $\R^n$. We set
\[
	\forall\alpha\in\mathcal J\colon\quad
	D^\textup C_\alpha:=\R^p_-\times\{0^q\}\times\R^n_\alpha
\]
as well as $D^\textup C:=\bigcup_{\alpha\in\mathcal J}D^\textup C_\alpha$.
Furthermore, let us define $F\colon\R^n\to\R^{p+q+n}$ by means of
\[
	\forall x\in\R^n\colon\quad
	F(x):=
		\begin{bmatrix}
			g(x)^\top & h(x)^\top & x^\top
		\end{bmatrix}^\top.
\]
Now, the feasible set of \eqref{eq:CCMP} can be 
represented by $X^\textup C:=\{x\in\R^n\,|\,F(x)\in D^\textup C\}$.
Let us fix a feasible point $\bar x\in X^\textup C$ of \eqref{eq:CCMP}. 
We will exploit the index sets
\begin{align*}
	I_{\pm}(\bar x):=\{i\in\{1,\ldots,n\}\,|\,x_i\neq 0\},
	\qquad\qquad
	I_0(\bar x):=\{1,\ldots,n\}\setminus I_\pm(\bar x).
\end{align*}
Furthermore, we will make use of 
$I(\bar x):=\{\alpha\in\mathcal J\,|\,F(\bar x)\in D^\textup C_\alpha\}$.
For arbitrary $\alpha\in\mathcal J$, we obtain 
\[
	\alpha\in I(\bar x)\,\Longleftrightarrow\,
		\forall i\in I_{\pm}(\bar x)\colon\,\alpha_i=2.
\]
Clearly, we have 
\[
	\forall\alpha\in I(\bar x)\colon\quad
	\mathcal T_{\R^n_\alpha}(\bar x)=\R^n_\alpha,
	\qquad
	\widehat{\mathcal N}_{\R^n_\alpha}(\bar x)=\R^n_{3\mathtt e-\alpha}.
\]
This can be used to compute the tangent and Fr\'{e}chet normal cone to $D^\textup{C}_\alpha$
for each $\alpha\in I(\bar x)$.
The resulting constraint qualification MPDC-LICQ for \eqref{eq:CCMP} takes the form
\[
	\left.
		\begin{aligned}
			&0^n=\nabla g(\bar x)^\top\lambda+\nabla h(\bar x)^\top\rho+\mu,\\
			&\forall j\notin I^g(\bar x)\colon\,\lambda_j=0,\\
			&\forall i\in I_{\pm}(\bar x)\colon\,\mu_i=0
		\end{aligned}
	\right\}\,
	\Longrightarrow\,
			\,\lambda=0^p,\,\rho=0^q,\,\mu=0^n\\	
\]
which is equivalent to the linear independence of the vectors from
\[
	\{\nabla g_j(\bar x)\,|\,j\in I^g(\bar x)\}
	\cup
	\{\nabla h_j(\bar x)\,|\,j\in\{1,\ldots,q\}\}
	\cup
	\{\mathtt e_i\,|\,i\in I_0(\bar x)\},
\]
and the latter is well known as CC-LICQ in the literature,
see e.g.\ \cite[Definition~3.11]{CervinkaKanzowSchwartz2016}.
The associated system of S-stationarity from \cref{def:SSt} reads as follows:
\[
	\begin{aligned}
		&0^n=\nabla f(\bar x)+\nabla g(\bar x)^\top\lambda+\nabla h(\bar x)^\top\rho+\mu,\\
		&\lambda\geq 0^p,\,\forall j\notin I^g(\bar x)\colon\,\lambda_j=0,\\
		&\norm{\bar x}{0}=\kappa\,\Longrightarrow\,\forall i\in I_{\pm}(\bar x)\colon\,\mu_i=0,\\
		&\norm{\bar x}{0}<\kappa\,\Longrightarrow\,\mu=0^n.
	\end{aligned}
\]
Defining $\bar y\in\R^n$ by
\[
	\forall i\in\{1,\ldots,n\}\colon\quad
	\bar y_i:=
		\begin{cases}
			0	&i\in I_\pm(\bar x),\\	1&i\in I_0(\bar x),
		\end{cases}
\]
we can check that whenever $\bar x$ is  S-stationary for \eqref{eq:CCMP} in the above sense,
then $(\bar x,\bar y)$ is a feasible point of \eqref{eq:surrogateCCMP} which is
strongly stationary in the sense of \cite[Definition~4.6]{BurdakovKanzowSchwartz2016} whenever
$\norm{\bar x}{0}=\kappa$ holds.
In case $\norm{\bar x}{0}<\kappa$, the S-stationarity conditions from above are more
restrictive than the strong stationarity conditions for \eqref{eq:surrogateCCMP} known from the literature.
However, \cref{thm:SSt_via_MPDC-LICQ} precisely recovers \cite[Proposition~2.1]{BucherSchwartz2018}
in the setting at hand.

Some calculations show that the linearization cone from \eqref{eq:linearization_cone} 
associated with \eqref{eq:CCMP} is given by
\[
	\mathcal L_{X^\textup{C}}(\bar x)
	=
	\left\{
		d\in\R^n\,\middle|\,
			\begin{aligned}
				\nabla g_j(\bar x)\cdot d&\,\leq\,0&&j\in I^g(\bar x)\\
				\nabla h_j(\bar x)\cdot d&\,=\,0&&j\in\{1,\ldots,q\}\\
				|\{i\in I_0(\bar x)\,|\,d_i=0\}|&\,\geq\,n-\kappa&& 
			\end{aligned}
	\right\}.
\]
Whenever $\bar x$ is an S-stationary point of \eqref{eq:CCMP}, then due to
\cref{lem:critical_cone_and_SSt}, for each corresponding multiplier
$(\lambda,\rho,\mu)\in\R^p\times\R^q\times\R^n$ which solves the system of S-stationarity, 
the associated critical
cone from \eqref{eq:critical_cone} is given by
\[
	\mathcal C_{X^\textup{C}}(\bar x)
	=
	\left\{
		d\in\R^n\,\middle|\,
			\begin{aligned}
				\nabla g_j(\bar x)\cdot d&\,\leq\,0&&j\in I^g(\bar x),\,\lambda_j=0\\
				\nabla g_j(\bar x)\cdot d&\,=\,0&&j\in I^g(\bar x),\,\lambda_j>0\\
				\nabla h_j(\bar x)\cdot d&\,=\,0&&j\in\{1,\ldots,q\}\\
				|\{i\in I_0(\bar x)\,|\,d_i=0\}|&\,\geq\,n-\kappa&& 
			\end{aligned}
	\right\}.
\]
Thus, in the context of \eqref{eq:CCMP},
\cref{thm:SONC} precisely recovers \cite[Corollary~3.1]{BucherSchwartz2018} and
\cite[Theorem~4.1]{PanXiuFan2017} while the statement of \cref{thm:SOSC} parallels
\cite[Corollary~3.2]{BucherSchwartz2018} and enhances \cite[Theorem~4.2]{PanXiuFan2017}.
The stability result from \cref{thm:stability_of_S_stationary_points} can be found
in slightly enhanced form in \cite[Corollary~3.3]{BucherSchwartz2018}.

\subsection{Application to MPSCs}

Let us consider so-called \emph{mathematical programs with switching constraints} 
which are optimization problems of the form
\begin{equation}\label{eq:MPSC}\tag{MPSC}
	\begin{aligned}
		f(x)&\,\rightarrow\,\min &&&\\
		g_j(x)&\,\leq\,0&\quad&j=1,\ldots,p&\\
		h_j(x)&\,=\,0&&j=1,\ldots,q&\\
		G_j(x)\cdot H_j(x)&\,=\,0&&j=1,\ldots,l.&
	\end{aligned}
\end{equation}
Models of type \eqref{eq:MPSC} arise from the discretization of so-called
switching-constrained optimal control problems, see e.g.\ \cite{ClasonRundKunisch2017}
and the references therein, as well as the reformulation of logical or-constraints, see
\cite[Section~7]{Mehlitz2018}, or semi-continuity conditions on variables, see 
\cite[Section~5.2.3]{KanzowMehlitzSteck2018}. 
First-order necessary optimality conditions as well as numerical relaxation methods for
problems of type \eqref{eq:MPSC} can be found in \cite{KanzowMehlitzSteck2018,Mehlitz2018}.

Let us transfer \eqref{eq:MPSC} into a program of type \eqref{eq:MPDC}. 
Therefore, we introduce $S^\textup{SC}_1:=\R\times\{0\}$ and $S^\textup{SC}_2:=\{0\}\times\R$
as well as $\mathcal J:=\{1,2\}^l$. We set
\[
	\forall \alpha\in\mathcal J\colon\quad
	D^\textup{SC}_\alpha:=\R^p_-\times\{0^q\}\times\prod\nolimits_{j=1}^lS^\textup{SC}_{\alpha_j}
\]
as well as $D^\textup{SC}:=\bigcup_{\alpha\in\mathcal J}D^\textup{SC}_\alpha$.
Using the mapping $F$ defined in \eqref{eq:definition_constraint_map}, the feasible set of
\eqref{eq:MPSC} can be represented by $X^\textup{SC}:=\{x\in\R^n\,|\,F(x)\in D^\textup{SC}\}$.
The variational geometry of $X^\textup{SC}$ is visualized in \cref{fig:MPSC}.
\begin{figure}[h]\centering
\includegraphics[width=0.95\textwidth]{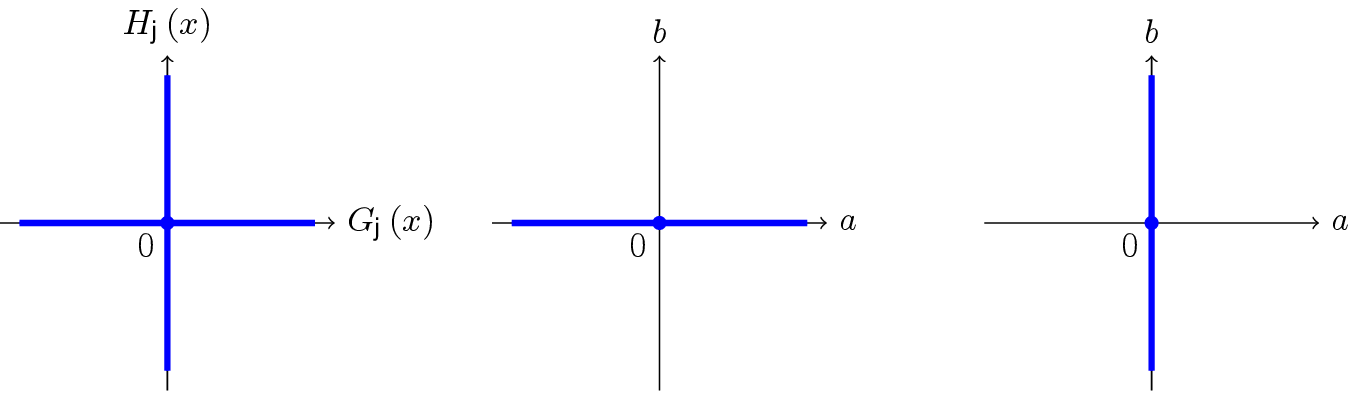}
\caption{Geometric illustrations of $X^\textup{SC}$ (left) , $S^\textup{SC}_1$ (middle), and $S^\textup{SC}_2$ (right), respectively.}
\label{fig:MPSC}
\end{figure}
For a feasible point $\bar x\in X^\textup{SC}$ of \eqref{eq:MPSC}, we introduce the following index sets:
\begin{align*}
	I^{G}(\bar x)&:=\{j\in\{1,\ldots,l\}\,|\,G_j(\bar x)=0,\,H_j(\bar x)\neq 0\},\\
	I^{H}(\bar x)&:=\{j\in\{1,\ldots,l\}\,|\,G_j(\bar x)\neq 0,\,H_j(\bar x)=0\},\\
	I^{GH}(\bar x)&:=\{j\in\{1,\ldots,l\}\,|\,G_j(\bar x)=0,\,H_j(\bar x)=0\}.
\end{align*}
Furthermore, we set $I(\bar x):=\{\alpha\in\mathcal J\,|\,F(\bar x)\in D^\textup{SC}_\alpha\}$.
Thus, we have
\[
	\alpha\in I(\bar x)
	\,\Longleftrightarrow\,
	\forall j\in I^G(\bar x)\colon\,\alpha_j=2\,\land\,\forall j\in I^H(\bar x)\colon\,\alpha_j=1
\]
for each $\alpha\in\mathcal J$.
Computing the tangent and Fr\'{e}chet normal cones to $S^\textup{SC}_1$ and $S^\textup{SC}_2$, 
we obtain that the constraint qualification MPDC-LICQ takes the following form for \eqref{eq:MPSC}:
\[
	\left.
		\begin{aligned}
			&0^n=\nabla g(\bar x)^\top\lambda+\nabla h(\bar x)^\top\rho+\nabla G(\bar x)^\top\mu+\nabla H(\bar x)^\top\nu,\\
			&\forall j\notin I^g(\bar x)\colon\,\lambda_j=0,\\
			&\forall j\in I^H(\bar x)\colon\,\mu_j=0,\\
			&\forall j\in I^G(\bar x)\colon\,\nu_j=0
		\end{aligned}
	\right\}
	\,\Longrightarrow\,
	\left\{
		\begin{aligned}
			&\lambda=0^p,\,\rho=0^q,\\
			&\mu=\nu=0^l.
		\end{aligned}
	\right.	
\]
We note that this is equivalent to the linear independence of all the vectors from
\begin{align*}
	&\{\nabla g_j(\bar x)\,|\,j\in I^g(\bar x)\}
	\cup\{\nabla h_j(\bar x)\,|\,j\in\{1,\ldots,q\}\}\\
	&\qquad \cup\{\nabla G_j(\bar x)\,|\,j\in I^G(\bar x)\cup I^{GH}(\bar x)\}
	\cup\{\nabla H_j(\bar x)\,|\,j\in I^H(\bar x)\cup I^{GH}(\bar x)\},
\end{align*}
and this condition is called MPSC-LICQ in the literature, see \cite[Definition~4.4]{Mehlitz2018}.
The associated S-stationarity system from \cref{def:SSt} is given by
\begin{align*}
	&0^n=\nabla f(\bar x)+\nabla g(\bar x)^\top\lambda+\nabla h(\bar x)^\top\rho
		+\nabla G(\bar x)^\top\mu+\nabla H(\bar x)^\top \nu,\\
	&\lambda\geq 0^p,\,\forall j\notin I^g(\bar x)\colon\,\lambda_j=0,\\
	&\forall j\in I^H(\bar x)\cup I^{GH}(\bar x)\colon\,\mu_j=0,\\
	&\forall j\in I^G(\bar x)\cup I^{GH}(\bar x)\colon\,\nu_j=0
\end{align*}
and equals the problem-tailored system of strong stationarity as it has been stated in
\cite[Definition~4.3]{Mehlitz2018}. The results of \cref{thm:SSt_via_MPDC-LICQ} can
be found in \cite[Theorem~4.5]{Mehlitz2018}.

One can easily check that the linearization cone from \eqref{eq:linearization_cone}
possesses the form
\[
	\mathcal L_{X^\textup{SC}}(\bar x)=
		\left\{
			d\in\R^n\,\middle|\,
				\begin{aligned}
					\nabla g(\bar x)\cdot d&\,\leq\,0&&j\in I^g(\bar x)\\
					\nabla h(\bar x)\cdot d&\,=\,0&& j\in\{1,\ldots,q\}\\
					\nabla G(\bar x)\cdot d&\,=\,0&& j\in I^G(\bar x)\\
					\nabla H(\bar x)\cdot d&\,=0\,&& j\in I^H(\bar x)\\
					(\nabla G(\bar x)\cdot d)(\nabla H(\bar x)\cdot d)&\,=\,0&& j\in I^{GH}(\bar x)
				\end{aligned}
		\right\}
\]
while we obtain 
\[
	\mathcal C_{X^\textup{SC}}(\bar x)=
		\left\{
			d\in\R^n\,\middle|\,
				\begin{aligned}
					\nabla g(\bar x)\cdot d&\,\leq\,0&&j\in I^g(\bar x),\,\lambda_j=0\\
					\nabla g(\bar x)\cdot d&\,=\,0&& j\in I^g(\bar x),\,\lambda_j>0\\
					\nabla h(\bar x)\cdot d&\,=\,0&& j\in\{1,\ldots,q\}\\
					\nabla G(\bar x)\cdot d&\,=\,0&& j\in I^G(\bar x)\\
					\nabla H(\bar x)\cdot d&\,=0\,&& j\in I^H(\bar x)\\
					(\nabla G(\bar x)\cdot d)(\nabla H(\bar x)\cdot d)&\,=\,0&& j\in I^{GH}(\bar x)
				\end{aligned}
		\right\}
\]
for the associated critical cone provided $\bar x$ is an S-stationary point of \eqref{eq:MPSC}
with associated multipliers $(\lambda,\rho,\mu,\nu)\in\R^p\times\R^q\times\R^l\times\R^l$, 
see \cref{lem:critical_cone_and_SSt}.
Based on this critical cone, \cref{thm:SONC,thm:SOSC} provide a necessary and sufficient
second-order optimality condition for \eqref{eq:MPSC}. 
Furthermore, \cref{thm:stability_of_S_stationary_points} yields
a criterion which ensures local uniqueness of S-stationary points associated with
\eqref{eq:MPSC}. To the best of our knowledge, these are new results on the problem class
\eqref{eq:MPSC}.

\section{Final remarks}\label{sec:conclusions}

In this paper, we introduced a reasonable abstract version of the prominent linear independence constraint
qualification which applies to mathematical programs with disjunctive constraints.
We were able to derive first- and second-order optimality conditions based on strongly stationary
points under validity of this constraint qualification in elementary way. 
Finally, we applied our findings to several different instances of disjunctive programs in
order to underline that this new constraint qualification is reasonable. 
By means of switching-constrained mathematical problems, it has been demonstrated that our
theory does not only recover well-known results from the literature but can be used to
infer new results on specific instances of disjunctive programming as well.

\subsection*{Acknowledgments}

We would like to thank two anonymous reviewers for several valuable comments and remarks which
led to a significant improvement of this paper during revision. 
Particularly, we are in debt to one of the referees for suggesting the current version of \cref{lem:stability_property}
as well as its proof.

\bibliographystyle{plainnat}
\bibliography{references}

\end{document}